\documentclass[reqno,10pt]{amsart}
\usepackage{amsmath,amsthm,amssymb}
\usepackage{enumerate}
\usepackage{color}
\usepackage{mathrsfs}
\usepackage{ulem}\normalem 
\usepackage{comment}
\textheight22.2cm 
\topmargin-0.6cm 
\oddsidemargin5mm
\evensidemargin5mm 
\textwidth14.8cm 
\headsep0.8cm 
\headheight0.4cm%
\parskip1mm

\newcommand{\I}{I_{[~\cdot\, \geq 0]}}

\newtheorem{Th}{Theorem}[section] 
\newtheorem{Lem}[Th]{Lemma} 
\newtheorem{Prop}[Th]{Proposition} 
\newtheorem{Cor}[Th]{Corollary} 
\newtheorem{Def}[Th]{Definition} 
 
\newtheorem{Rem}[Th]{Remark} 
 


\newcommand{\stasol}{z}

\def\N{{\mathbb N}} 
\def\R{{\mathbb R}}

\def\cH{{\mathcal H}}

\def\1/2{\frac{1}{2}} 
\def\dint{\int\hspace{-1ex}\int}

\def\Isom{\mbox{\rm Isom}}

\def\ov{\overline}

\def\GammaD{{\Gamma_{\rm\scriptsize D}}}
\def\GammaN{{\Gamma_{\rm\scriptsize N}}}

\renewcommand{\d}{\text{\rm d}}

%
%

\makeatletter 
\long\def\@makefntext#1{\parindent 1em\noindent 
\@hangfrom{\hbox to 1.8em{\hss$^{\@thefnmark}$}}#1}
\makeatother

\begin{document}

\title{Unidirectional evolution equations of diffusion type}

\author{Goro Akagi \quad and \quad Masato Kimura}
\address[Goro Akagi]{Graduate School of System Informatics, Kobe University,
1-1 Rokkodai-cho, Nada-ku, Kobe 657-8501 Japan}
\email{akagi@port.kobe-u.ac.jp}

\address[Masato Kimura]{Faculty of Mathematics and Physics, 
Institute of Science and Engineering, Kanazawa University, 
Kakuma, Kanazawa 920-1192 Japan 
}
\email{mkimura@se.kanazawa-u.ac.jp}

\date{\today}

\keywords{Unidirectional diffusion equation, damage mechanics,
discretization, variational inequality of obstacle type, regularity,
subdifferential calculus}

\subjclass[2010]{\emph{Primary}: 35K86; \emph{Secondary}: 35K61, 74A45} 

\begin{abstract} 
 This paper is concerned with the uniqueness, existence, comparison
 principle and long-time behavior of solutions to the
 initial-boundary value problem for a \emph{unidirectional diffusion
 equation}. The unidirectional evolution often appears in Damage
 Mechanics due to the strong irreversibility of crack propagation or
 damage evolution. The existence of solutions is proved in an
 $L^2$-framework by introducing a
 peculiar discretization of the unidirectional diffusion equation by
 means of variational inequities of obstacle type and by developing a
 regularity theory for such variational inequalities. The novel
 discretization argument will be also applied to prove the comparison
 principle as well as to investigate the long-time behavior of solutions.
\end{abstract} 

\maketitle


\section{Introduction}\label{sec1} 

In Damage Mechanics, the \emph{unidirectional evolution} is a significant
feature of crack propagation models. Indeed, crack propagation is an
irreversible phenomena, and particularly, cracks in a specimen or the
damage of a material (e.g., microcracks which break or weaken bonds of
microstructures) cannot disappear nor decrease. Hence if one introduces a
phase parameter (see~\cite{Mi04,KMZ08,T-K09,K-T10,KRC13}) or an internal
variable (see~\cite{BaPr93,BeBi97}) which describes the crack growth
or the damage accumulation, they are usually supposed to be
unidirectional, i.e., nondecreasing or nonincreasing. Such
unidirectional evolution processes are often described by PDEs involving
the \emph{positive-part function}, $s \mapsto (s)_+ := s \vee 0 =
\max\{s,0\}$ for $s \in \R$. Moreover, in a phase field approach, the evolution of a phase
parameter is governed by the gradient flow of an appropriate energy
functional subject to some unidirectional constraint.

In order to find out mathematical features of such unidirectional
evolutions and to develop mathematical devices to analyze them, in this
paper, we shall treat, as a simplest case, the evolution of $u = u(x,t)$
governed by the (fully nonlinear) PDE,
\begin{align}\label{eqn}
\partial_t u
=\big(
\Delta u +f
\big)_+,
\quad \mbox{ for } \ x \in \Omega, \ t > 0,
\end{align}
where $\Omega$ is a bounded Lipschitz domain of $\R^n$ with $n\in\N$,
$\partial_t u = \partial u/\partial t$, $\Delta$ stands for the
$n$-dimensional Laplacian, $f = f(x,t)$ is a given function and $(s)_+ := s
\vee 0$ for $s \in \R$. More precisely, the main purpose of this paper
is to prove the uniqueness, existence and comparison principle of
\emph{strong} solutions $u = u(x,t)$ of the initial-boundary value
problem for \eqref{eqn} and to reveal the asymptotic behavior of $u =
u(x,t)$ as $t \to \infty$.

Solutions of \eqref{eqn} entail \emph{unidirectional} nature,
more precisely, the non-decrease of $u = u(x,t)$ in $t$, since the
right-hand side of \eqref{eqn} is non-negative due to the presence of the
positive part function.
There appear various kinds of unidirectional evolutions in natural
sciences and engineering fields (see, e.g., \cite{Fre01}). 
In particular, a phase field model for crack propagation in an elastic
material was proposed with the aid of a unidirectional gradient flow by
the second author and his collaborator in~\cite{K-T10, Tak09,
T-K09} (see also~\cite{AmbTor,FraMar98}). Equation \eqref{eqn} can be
regarded as a simplified equation of their model.

In mathematical points of view, \eqref{eqn} is classified as a
fully nonlinear PDE, which is not fit for energy methods in general;
however, by taking a (multi-valued) inverse function of the positive
part function $(\,\cdot\,)_+$, \eqref{eqn} can be formulated as
a sort of doubly nonlinear evolution equations,
\begin{equation}\label{irr}
\partial_t u + \alpha (\partial_t u) -\Delta u \ni f \
 \mbox{ a.e.~in } \Omega \times (0,\infty),
\end{equation}
where $\alpha$ is a (multi-valued) maximal monotone function in $\R$
given by $\alpha(0) = (-\infty,0]$ and $\alpha(s) = \{0\}$ for any $s
> 0$ with the domain $D(\alpha) = [0,\infty)$ (see Remark \ref{R:def} and
Section \ref{sec-uni} below for more details). Equation \eqref{irr} is
fitter for energy methods and monotone techniques.
On the other hand, in view of the $L^2$-theory of evolution equations, two
operators $v \mapsto \alpha (v(\cdot))$ and $v \mapsto -\Delta v$
(defined for $v \in L^2(\Omega)$) are unbounded in $L^2(\Omega)$, and
hence, it is more delicate to establish a priori estimates for proving
the existence of strong solutions, as compared with standard equations
without unidirectional constraints, e.g., the
classical and nonlinear diffusion equations.

The nonlinear PDE \eqref{irr} may fall within the frame of abstract
doubly nonlinear evolution equations in a Hilbert space $H$ of the form
\begin{equation}\label{DNE}
\partial \Psi (\partial_t u(t)) + \partial \Phi(u(t)) \ni f(t) \ \mbox{
in } H, \quad 0 < t < T,
\end{equation}
where $\partial \Psi$ and $\partial \Phi$ denote the subdifferential
operator of functionals $\Psi : H \to (-\infty,\infty]$ and $\Phi : H \to
(-\infty,\infty]$, respectively. In a thermodynamic approach to
continuum mechanics, $\Psi$ and $\Phi$ are often referred to as a
\emph{dissipation functional} and an \emph{energy functional},
respectively. To reduce \eqref{irr} into the form \eqref{DNE}, we
set $u(t) := u(\cdot,t)$ and particularly choose
$$
H = L^2(\Omega), \quad
\Psi(v) = \dfrac 1 2 \int_\Omega |v|^2 \, \d x + \I(v), \quad
\Phi(v) = \dfrac 1 2 \int_\Omega |\nabla v|^2 \, \d x
\quad \mbox{ for } \ v \in H,
$$ 
where $\I$ is the indicator function over the set $\{ v \in L^2(\Omega)
\colon v \geq 0 \ \mbox{ a.e.~in } \Omega\}$. Then we note that both
subdifferentials $\partial\Psi$ and $\partial \Phi$ are unbounded in $H$.

Let us briefly review the previous studies on abstract doubly nonlinear
evolution equations such as \eqref{DNE}.
Barbu~\cite{Barbu75} proved the existence of solutions for
\eqref{DNE} with two unbounded operators $\partial \Psi$
and $\partial \Phi$ by using the elliptic-in-time regularization and by
imposing the differentiability (in $t$) of $f$. This
result was generalized by Arai~\cite{Arai}, Senba~\cite{Senba} and
so on. In these papers, the term $\partial \Psi (\partial_t u(t))$ is
estimated by differentiating the equation and by testing it with $\partial
\Psi (\partial_t u(t))$. Therefore the differentiability of $f$ (more
precisely, $f \in W^{1,1}(0,T;H)$ by~\cite{Arai}) is essentially required, and
moreover, some strong monotonicity condition (i.e., the so-called
$\partial \Psi $-monotonicity) is also imposed on $\partial
\Phi$. Similar methods of establishing a priori estimates are also used
in individual studies on irreversible phase transition models (see,
e.g.,~\cite{BL06}). On the other hand, Colli and Visintin established an
alternative approach to \eqref{DNE} in~\cite{CV}, where $\partial \Psi$
is supposed to be bounded and coercive with linear growth instead of
assuming the regularity assumption on $f$ and the $\partial \Psi$-monotonicity
of $\partial \Phi$ (see also~\cite{Colli}). Their framework would be
more flexible in view of applications to nonlinear PDEs and has been
extensively applied to various types of doubly nonlinear
problems. Moreover, their framework has been generalized in many
directions, e.g., perturbation problems, long-time behaviors (see,
e.g.,~\cite{AiziYan, MiTh,MiRoSt08},~\cite[Sect.~11]{Roubicek},
~\cite{S08-BE, Segatti, MiRo, SSS, G11, G12, G15, G19}).
However, due to the unboundedness of $\partial \Psi$, \eqref{irr} seems
to be beyond the scope of the latter approach.
On the other hand, the former approach due to Barbu and Arai is
applicable to \eqref{irr}; however, the regularity assumption $f \in
W^{1,1}(0,T;H)$ has to be assumed for proving the existence of
solutions. Aso et al.~\cite{AsFrK, AsK} also treated an irreversible
phase transition system in a different fashion; however, the regularity
condition $f \in W^{1,2}(0,T;H)$ is also assumed there.

In this paper, we present a novel approach to \eqref{eqn} (or
equivalently \eqref{irr}) by introducing a peculiar discretization
for \eqref{eqn} by means of variational inequalities of
obstacle type. Moreover, by developing a regularity theory of such variational
inequalities, we shall establish new a priori estimates
for \eqref{eqn} (or \eqref{irr}) without assuming the differentiability
(in $t$) of $f$. Such a relaxation of the regularity assumption on $f$
may bring some advantage to consider perturbation problems, which will be left
for a forthcoming paper. Moreover, the novel discretization method will
be also applied to investigate the long-time behavior of solutions as well as
to prove a comparison theorem. In particular, we shall provide uniform
(in $t$) estimates for solutions by employing the peculiar discretization
scheme. Furthermore, some variational inequality of obstacle type will play
a crucial role in asymptotic analysis; indeed, it will turn out that
every solution will converge to the unique solution $\stasol = \stasol(x)$ of
a variational inequality of obstacle type involving the initial data as
an obstacle function from below under suitable assumptions. Here it is
worth mentioning that the limit $\stasol$ of the solution $u = u(x,t)$
depends on its initial data $u_0$; indeed, one can construct different
limits of solutions for different initial data. On the other hand, the
$\omega$-limit set of each solution must be singleton. 

The organization of this paper is as follows. 
The initial-boundary value problem for \eqref{eqn} and basic notions
(e.g., strong solution) and assumptions are formulated, and main results
of this paper will be stated in the next section. 
Section~\ref{sec-vi} is devoted to establishing a regularity theory as
well as to verifying a comparison theorem for variational inequalities of obstacle
type. In Section \ref{sec-uni}, we discuss a reduction of \eqref{eqn} to
an evolution equation of doubly nonlinear type in $L^2(\Omega)$ and
prove the uniqueness of solutions for the initial-boundary value problem.
In Section~\ref{sec-td}, we carry out the backward-Euler
time-discretization of \eqref{eqn} and construct a strong solution of
\eqref{eqn} by proposing a new a priori estimate based on the regularity
theory developed in Section~\ref{sec-vi}. A comparison theorem for
\eqref{eqn} is also proved here. The long-time behavior of solutions
will be investigated in Section~\ref{sec-ab}. In the last section, we
shall discuss other equivalent formulations of solutions for \eqref{eqn}.

\bigskip

\noindent
{\bf Notation.}
For each normed space $N$, we denote by $N'$ the dual space of $N$
with the duality pairing $\langle g,v\rangle_N := {}_{N'}\langle
g,v\rangle_N = g(v)$ for $v\in N$ and $g\in N'$.
For Banach spaces $U$ and $W$, the set of all bounded linear operators from $U$
to $W$ is denoted by $B(U,W)$. Moreover, the set of all linear topological
isomorphisms from $U$ to $W$ is denoted by $\Isom (U,W)$, that is, $A\in
\Isom (U,V)$ means that $A$ is bijective from $U$ to $W$, $A\in B(U,W)$
and $A^{-1}\in B(W,U)$. Furthermore, $\cH^k$ stands for the
$k$-dimensional Hausdorff measure in $\mathbb R^n$ for $k =
1,2,\ldots,n$. We also write $a \vee b = \max\{a,b\}$ and $a \wedge b =
\min\{a,b\}$ for $a,b \in \mathbb R$. Finally, $C$ denotes
a non-negative constant independent of the elements of the corresponding
space and set and may vary from line to line.

\section{Main results}
\label{sec-mr} 

Let $\Omega$ be a bounded Lipschitz domain in $\R^n$ with $n\in\N$.
Let $\Gamma$ be the boundary of $\Omega$ such that $\Gamma$ is the
disjoint union of two subsets $\GammaD$ and $\GammaN$, that is,
$$
\GammaD \cup \GammaN = \Gamma, \quad \GammaD \cap \GammaN = \emptyset.
$$
Moreover, assure that $\Gamma_D$ is (relatively) open in $\Gamma$. Let
$\nu$ denote the outward-pointing unit normal vector on $\Gamma$.
One of these two subsets may be empty. In such a case, the other set
coincides with the whole of $\Gamma$.
Main results of the present paper are concerned with the following
initial-boundary value problem for a unidirectional evolution equation
of diffusion type,
\begin{alignat}{4}
\partial_t u &=\big( \Delta u + f \big)_+\quad
&&\mbox{ in } \ Q := \Omega \times (0,\infty), \label{is1}\\
u&=0
&&\mbox{ on } \ \GammaD \times (0,\infty),\label{bc-D}\\
\partial_\nu u &=0
&&\mbox{ on } \ \GammaN \times (0,\infty),\label{bc-N}\\
u|_{t = 0}&=u_0
&&\mbox{ in } \ \Omega,\label{ic}
\end{alignat}
where $\partial_t = \partial/\partial t$, $f = f(x,t)$ and $u_0 = u_0(x)$ are
given functions of class $L^2_{loc}([0,\infty);L^2(\Omega))$ and
$L^2(\Omega)$, respectively, and $\partial_\nu u := \gamma_0(\nabla u)
\cdot \nu$ denotes the normal
derivative of $u$. Moreover, $(\cdot)_+$ stands for the positive part
function, i.e., $(s)_+ := s \vee 0$ for $s \in \R$. If $\GammaD$ (resp.,
$\GammaN$) is empty, the corresponding boundary condition \eqref{bc-D}
(resp., \eqref{bc-N}) is ignored.

\begin{Rem}
{\rm
By change of variable, one can reduce another unidirectional diffusion
 equation,
\begin{equation}\label{ais}
\partial_t u = \big( \Delta u + f \big)_- \quad \mbox{ in } \ Q,
\end{equation}
where $(s)_- := s \wedge 0$ for $s \in \R$, to \eqref{is1}. Indeed, set
 $v := -u$ and $g := -f$. Then \eqref{ais} is transformed to
$$
- \partial_t v = \big( - \Delta v - g \big)_-
= - \big( \Delta v + g \big)_+,
$$
whence $v$ solves \eqref{is1} with $f$ replaced by $g$.
}
\end{Rem}

Let us start with defining \emph{strong solutions} of
\eqref{is1}--\eqref{ic}. To this end, we set up notation.
Let $\gamma_0 \in B(H^1(\Omega), H^{1/2}(\Gamma))$ denote the trace
operator defined on $H^1(\Omega)$ (throughout the paper, we may omit
$\gamma_0$ if no confusion can arise). Moreover, define
\begin{align*}
V&:=\{v\in H^1(\Omega) \colon \ \gamma_0v=0
\quad \cH^{n-1}\mbox{-a.e.~on } \GammaD\},\\
X&:=\{v\in H^2(\Omega) \colon \ \gamma_0(\nabla v) \cdot \nu =0
\quad \cH^{n-1}\mbox{-a.e.~on } \GammaN\}
\end{align*}
equipped with the induced norms and inner products, i.e.,
$\|\cdot\|_V = \|\cdot\|_{H^1(\Omega)}$ and $(\cdot,\cdot)_V =
(\cdot, \cdot)_{H^1(\Omega)}$ for $V$; $\|\cdot\|_X =
\|\cdot\|_{H^2(\Omega)}$ and $(\cdot,\cdot)_X = (\cdot,
\cdot)_{H^2(\Omega)}$ for $X$.
Then $V$ and $X$ are closed subspaces of $H^1(\Omega)$
and $H^2(\Omega)$, respectively; hence, they are Hilbert spaces.
If either $\GammaD$ or $\GammaN$ is empty, the corresponding boundary
condition specified in the definition of $V$ or $X$ above is ignored.

We are concerned with \emph{strong solutions} of \eqref{is1}--\eqref{ic}
defined by

\begin{Def}[Strong solution]\label{def-ss}
For $T > 0$, a function $u \in C([0,T];L^2(\Omega))$ is called a \emph{strong
 solution} of \eqref{is1}--\eqref{ic} on $[0,T]$, if the following three
 conditions are satisfied\/{\rm :}
\begin{enumerate}[{\rm (i)}]
\item $u\in W^{1,2}(0,T;L^2(\Omega)) \cap L^2(0,T;X \cap V)$,
\item the equation $\partial_t u =(\Delta u+f)_+$ holds a.e.~in $Q_T := \Omega
	     \times (0,T)$,
\item the initial condition $u|_{t=0}=u_0$ holds a.e.~in $\Omega$.
\end{enumerate}
A function $u \in C([0,\infty);L^2(\Omega))$ is called a \emph{strong
 solution} of \eqref{is1}--\eqref{ic} on $[0,\infty)$, if for any $T> 0$, the
 restriction of $u$ onto $[0,T]$ is a strong solution
 of \eqref{is1}--\eqref{ic} on $[0,T]$.
\end{Def}

\begin{Rem}\label{R:def}
{\rm
 One can further derive that $u \in C([0,T];V)$ from (i) by employing a
 chain-rule for convex functionals. See Lemma \ref{lemforuq} in \S
 \ref{sec-uni} below for more details.
}
\end{Rem}

We are now in position to state main results, whose proofs
will be given in later sections. We begin with the uniqueness of
solutions. 

\begin{Th}[Uniqueness]\label{th-uni}
Let $T > 0$, $u_0 \in V$ and $f \in L^2(Q_T)$. Then the strong
 solution of \eqref{is1}--\eqref{ic} on $[0,T]$ is unique.
\end{Th}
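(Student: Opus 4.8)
The plan is to argue by an energy estimate on the difference of two solutions, exploiting the monotonicity of the positive-part function together with the chain rule for the Dirichlet energy. Let $u_1, u_2$ be two strong solutions of \eqref{is1}--\eqref{ic} on $[0,T]$ sharing the data $u_0 \in V$ and $f \in L^2(Q_T)$, and set $w := u_1 - u_2$. Writing $\beta(s) := (s)_+$, which is nondecreasing, and using that each $u_i$ satisfies the equation in Definition \ref{def-ss}, i.e.\ $\partial_t u_i = \beta(\Delta u_i + f)$ a.e.\ in $Q_T$, the monotonicity of $\beta$ yields the pointwise inequality
\begin{equation*}
\partial_t w\, \Delta w = \big( \beta(\Delta u_1 + f) - \beta(\Delta u_2 + f) \big)\big( (\Delta u_1 + f) - (\Delta u_2 + f) \big) \geq 0
\quad \text{a.e.\ in } Q_T ,
\end{equation*}
since $(\Delta u_1 + f) - (\Delta u_2 + f) = \Delta w$. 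Both factors lie in $L^2(Q_T)$ because $u_i \in W^{1,2}(0,T;L^2(\Omega)) \cap L^2(0,T;X\cap V)$.

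Next I would integrate this inequality over $\Omega$ and integrate by parts. The delicate point is that $\partial_t w$ belongs only to $L^2(0,T;L^2(\Omega))$, so the naive Green formula is not directly available; instead I would invoke the chain rule for the convex functional $\Phi(v) = \tfrac12\int_\Omega |\nabla v|^2\,\d x$ (set to $+\infty$ off $V$), whose subdifferential in $L^2(\Omega)$ is $\partial\Phi(v) = -\Delta v$ with domain $X\cap V$; this is precisely the mechanism behind Remark \ref{R:def}. Since $w(t) \in X\cap V$ for a.e.\ $t$ and $-\Delta w \in L^2(0,T;L^2(\Omega))$, the map $t\mapsto \Phi(w(t))$ is absolutely continuous and
\begin{equation*}
\frac{\d}{\d t}\,\frac12 \int_\Omega |\nabla w|^2\,\d x
= \big( -\Delta w(t),\, \partial_t w(t)\big)_{L^2(\Omega)}
= -\int_\Omega \partial_t w\, \Delta w\,\d x \leq 0
\quad \text{for a.e.\ } t\in(0,T).
\end{equation*}
The boundary contributions encoded in $\partial\Phi$ vanish thanks to the constraints defining $V$ and $X$, namely $w=0$ on $\GammaD$ and $\partial_\nu w = 0$ on $\GammaN$. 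Integrating in time and using $w(0)=0$ gives $\|\nabla w(t)\|_{L^2(\Omega)} = 0$ for all $t\in[0,T]$, so $\nabla w \equiv 0$.

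It then remains to upgrade $\nabla w \equiv 0$ to $w\equiv 0$. If $\GammaD \neq \emptyset$, then $w(t)\in V$ together with the Poincar\'e inequality forces $w(t)=0$ directly. In the pure Neumann case $\GammaD = \emptyset$, the identity $\nabla w \equiv 0$ only shows that $w(\cdot,t)$ is spatially constant; then $\Delta u_1 = \Delta u_2$ a.e., whence $\partial_t w = \beta(\Delta u_1 + f) - \beta(\Delta u_2 + f) = 0$ a.e., and the initial condition $w(0)=0$ yields $w\equiv 0$. I expect the main obstacle to be the rigorous justification of the integration by parts under the limited time-regularity of $\partial_t u$: the product $\partial_t w\,\Delta w$ must be paired through the convex chain rule rather than a classical computation, and one must verify that $-\Delta w$ is an admissible $L^2$-in-time selection of $\partial\Phi$ so that the absolute continuity of $t\mapsto\Phi(w(t))$ is legitimate.
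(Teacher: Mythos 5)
Your argument is correct and follows essentially the same route as the paper: the convex chain rule for $t \mapsto \tfrac12\|\nabla w(t)\|_{L^2(\Omega)}^2$ (Lemma~\ref{lemforuq}) combined with the monotonicity of $s \mapsto (s)_+$ applied to the difference of two solutions. The only deviation is that the paper uses the sharper inequality $|a_+-b_+|^2 \le (a_+-b_+)(a-b)$, which yields $\partial_t w \equiv 0$ directly (and a continuous-dependence estimate as a by-product), whereas you use only $(a_+-b_+)(a-b)\ge 0$ and therefore need your correct but additional final step to upgrade $\nabla w \equiv 0$ to $w \equiv 0$.
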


To state our existence result, we shall introduce some assumptions
for the domain $\Omega$ and the boundary $\GammaD$, $\GammaN$.
For $\lambda \in \R$, we define a mapping $A_\lambda\in B(V,V')$ by
\begin{equation}\label{def:A-lam}
\langle A_\lambda u,v\rangle_V =
\int_\Omega \left(\nabla u\cdot\nabla v + \lambda uv\right)\,\d x
\quad \mbox{ for } \ u,v\in V.
\end{equation}
It is well known that $A_\lambda \in \Isom (V,V')$ holds if $\lambda >0$.
Hence one can define $u = A_\lambda^{-1} g$ for $g \in L^2(\Omega)$ as
the unique solution $u$ of the elliptic problem in a weak form,
$$
\int_\Omega \left(\nabla u\cdot\nabla v + \lambda uv\right)\,\d x
= \int_\Omega g v \, \d x
\quad \mbox{ for all } \ v \in V,
$$
(i.e., $A_\lambda u = g$ in $V'$). Then we assume that
\begin{align}\label{A1condition}
A_1^{-1}g\in H^2(\Omega)\quad \mbox{for all } \ g\in L^2(\Omega).
\end{align}
Condition \eqref{A1condition} is often called an \emph{elliptic
regularity} condition and deeply related to the geometry of the domain and
boundary conditions. Indeed, it holds true for smooth domains with a
single boundary condition (i.e., $\GammaN = \emptyset$ or $\GammaD =
\emptyset$). However, it is more delicate to consider the validity of
\eqref{A1condition} for situations with nonsmooth domains or mixed
boundary conditions. On the other hand, in order to take account of
physical backgrounds of crack growth models and their numerical
simulations, the regularity of the boundary may be at most Lipschitz
continuous, and mixed boundary conditions seem to be natural as well.
We shall give conditions equivalent to \eqref{A1condition} in
Proposition~\ref{prop-GammaD} below.

\noindent
\begin{Rem}
{\rm
Let us exhibit a couple of examples of $\Omega$, $\GammaD$ and $\GammaN$
for which the condition \eqref{A1condition} is satisfied.
\begin{enumerate}[{\rm (i)}]
\item If $\ov{\GammaD}\cap \ov{\GammaN}=\emptyset$ and $\Gamma$ is
of class $C^{1,1}$, then \eqref{A1condition} is
satisfied (see Theorem~2.2.2.3 and Theorem~2.2.2.5 of
      \cite{Gri85}).
\item
Let $\Omega$ be convex. 
If $\GammaD=\Gamma$ or $\GammaD=\emptyset$, then \eqref{A1condition} is
satisfied (see Theorem~3.2.1.2 and Theorem~3.2.1.3 of \cite{Gri85}).
\item
If $n=1$ or if $\Omega$ is a rectangle in $\R^2$ and $\GammaD$ is 
a union of some of four edges of $\Gamma$, then \eqref{A1condition} is
satisfied. Indeed, since the weak solution $u = A_1^{-1}g$ can be
     extended to an open neighborhood of $\ov{\Omega}$ by reflection, it
     follows that $u = A_1^{-1}g\in H^2(\Omega)$.
\end{enumerate}
}
\end{Rem}

Our existence result reads,

\begin{Th}[Existence]\label{T:ex}
We suppose that the condition \eqref{A1condition} holds true. 
Let $T>0$, $u_0\in X\cap V$ and $f\in L^2(Q_T)$ be given.
If there exists $f^*\in L^2(\Omega)$ satisfying 
\begin{equation}\label{f-hypo1}
f(x,t)\le f^*(x)\quad \mbox{a.e.~in }Q_T,
\end{equation}
then there exists a strong solution $u = u(x,t)$ to the
 problem \eqref{is1}--\eqref{ic} on $[0,T]$.
\end{Th}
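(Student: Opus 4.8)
The plan is to construct the solution by the backward-Euler time-discretization announced in the introduction, recast at each step as an obstacle-type variational inequality whose obstacle from below is the previous time slice. Fix $N\in\N$, set $\tau=T/N$ and $f_k:=\tau^{-1}\int_{(k-1)\tau}^{k\tau}f(\cdot,s)\,\d s\in L^2(\Omega)$, so that $f_k\le f^*$ a.e.\ by \eqref{f-hypo1}. Starting from $u_0\in X\cap V$, I would define $u_k\in K_{k-1}:=\{v\in V:\ v\ge u_{k-1}\ \text{a.e.}\}$ as the unique minimizer over $K_{k-1}$ of
$$
J_k(v):=\frac{1}{2\tau}\int_\Omega |v-u_{k-1}|^2\,\d x+\frac12\int_\Omega|\nabla v|^2\,\d x-\int_\Omega f_k v\,\d x,
$$
which exists and is unique since $J_k$ is coercive and strictly convex on the closed convex set $K_{k-1}\ni u_{k-1}$. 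Writing out the complementarity relations shows that $u_k$ satisfies $u_k\ge u_{k-1}$ together with $\tau^{-1}(u_k-u_{k-1})=(\Delta u_k+f_k)_+$, encoding both the equation \eqref{is1} and the unidirectional constraint $\partial_t u\ge 0$. Invoking the regularity theory of Section~\ref{sec-vi} under hypothesis \eqref{A1condition}, I would upgrade each $u_k$ to $X\cap V$, so that $\Delta u_k\in L^2(\Omega)$ and the Neumann condition on $\GammaN$ holds; moreover the multiplier $\mu_k:=\tau^{-1}(u_k-u_{k-1})-\Delta u_k-f_k=-(\Delta u_k+f_k)_-\ge 0$ lies in $L^2(\Omega)$ and vanishes a.e.\ on the non-coincidence set $\{u_k>u_{k-1}\}$.

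The first a priori estimate is the energy estimate. Using $u_k\in X\cap V$, the boundary terms in $\int_\Omega\Delta u_k\,(u_k-u_{k-1})\,\d x$ vanish (the Neumann part from $X$, the Dirichlet part because $u_k-u_{k-1}=0$ on $\GammaD$), and the pointwise identity $\big(\tau^{-1}(u_k-u_{k-1})\big)^2=(\Delta u_k+f_k)\,\tau^{-1}(u_k-u_{k-1})$, valid because the two factors agree wherever the left side is nonzero, yields after integration by parts and summation the uniform bound
$$
\max_k\|\nabla u_k\|_{L^2(\Omega)}^2+\tau\sum_k\left\|\frac{u_k-u_{k-1}}{\tau}\right\|_{L^2(\Omega)}^2\le C\big(\|\nabla u_0\|_{L^2(\Omega)}^2+\|f\|_{L^2(Q_T)}^2\big).
$$
Denoting by $\hat u_\tau$ the piecewise-linear and by $\bar u_\tau$ the piecewise-constant interpolant of $(u_k)$, this is a $\tau$-independent bound for $\bar u_\tau$ in $L^\infty(0,T;V)$ and for $\partial_t\hat u_\tau$ in $L^2(Q_T)$.

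The crux is a uniform $L^2(0,T;X\cap V)$ bound, obtained from a new monotonicity of the negative part of the Laplacian. Set $\nu_k:=(-\Delta u_k-f^*)_+=-(\Delta u_k+f^*)_-\ge 0$. On the non-coincidence set the PDE $\Delta u_k=\tau^{-1}(u_k-u_{k-1})-f_k$ holds, so $\Delta u_k+f^*=\tau^{-1}(u_k-u_{k-1})+(f^*-f_k)\ge 0$ there (both summands being nonnegative, using $f_k\le f^*$), whence $\nu_k=0$; on the coincidence set $\{u_k=u_{k-1}\}$ one has $\Delta u_k=\Delta u_{k-1}$ a.e., since the second derivatives of the $H^2$ function $u_k-u_{k-1}$ vanish a.e.\ on its zero set, whence $\nu_k=\nu_{k-1}$. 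Combining the two cases gives the pointwise monotonicity $0\le\nu_k\le\nu_{k-1}$ a.e., and therefore $\nu_k\le\nu_0=(-\Delta u_0-f^*)_+\in L^2(\Omega)$ uniformly in $k$ and $\tau$, the initial bound being finite precisely because $u_0\in X\cap V$. Bounding $\mu_k\le\nu_k+(f^*-f_k)$ and inserting into $\Delta u_k=\tau^{-1}(u_k-u_{k-1})-f_k-\mu_k$, the energy estimate upgrades to $\tau\sum_k\|\Delta u_k\|_{L^2(\Omega)}^2\le C$. I expect this to be the \emph{main obstacle} and the heart of the argument: it is the estimate that replaces the usual $f\in W^{1,1}(0,T;H)$ assumption, and it is here that the time-independent majorant $f^*$ and the $H^2$-regularity of the obstacle problem are indispensable; without the coincidence-set identity and the sign $f^*-f_k\ge 0$ the bound would accumulate in $k$ and blow up as $\tau\to 0$.

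Finally I would pass to the limit $\tau\to 0$. The uniform bounds give, along a subsequence, weak-$*$ convergence in $L^\infty(0,T;V)$, weak convergence in $L^2(0,T;X\cap V)$, and $\partial_t\hat u_\tau\rightharpoonup\partial_t u$ in $L^2(Q_T)$; since $X\cap V$ is closed and convex the limit $u$ inherits the boundary conditions, and the Aubin--Lions--Simon lemma gives strong convergence in $C([0,T];L^2(\Omega))$, which identifies $u(0)=u_0$ and forces $\bar u_\tau-\hat u_\tau\to 0$. The delicate point is the nonlinear term, since weak convergence of $\Delta\bar u_\tau$ does not pass through $(\cdot)_+$. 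I would handle this via the equivalent doubly nonlinear formulation \eqref{irr}, passing to the limit in the monotone graph $\alpha$ by a Minty-type argument, using the weak convergences together with the $\limsup$ of the energy product $\iint\partial_t\hat u_\tau\,(\Delta\bar u_\tau+\bar f_\tau)$ controlled by the discrete energy identity, to identify $\partial_t u+\alpha(\partial_t u)-\Delta u\ni f$, equivalently $\partial_t u=(\Delta u+f)_+$ a.e.\ in $Q_T$. This produces a strong solution in the sense of Definition~\ref{def-ss}, completing the proof.
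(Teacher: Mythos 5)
Your proposal is correct and follows the same overall architecture as the paper's proof: backward-Euler discretization recast as an obstacle-type variational inequality at each step, $H^2$-regularity of each $u_k$ via Section~\ref{sec-vi} under \eqref{A1condition}, the discrete energy estimate, a $\tau$-uniform $L^2(0,T;X)$ bound on $\Delta \bar u_\tau$ driven by the majorant $f^*$, and identification of the limit through the maximal monotone graph $\alpha$ by a Minty/demiclosedness argument (Proposition~\ref{P:demiclo}). You have also correctly isolated the heart of the matter, namely the iterated bound $-\Delta u_k \le f^* \vee (-\Delta u_0)$, which is exactly the paper's estimate obtained from \eqref{c-K2}.

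The one place where you genuinely diverge is the derivation of that key bound. The paper obtains the two-sided inequality $f_k + u_{k-1}/\tau \le A u_k \le (f_k + u_{k-1}/\tau) \vee (u_{k-1}/\tau - \Delta u_{k-1})$ as a consequence of the membership $u_k \in K_2$, which is established abstractly in Theorem~\ref{th2} via Gustafsson's dual obstacle problem (the equivalence of conditions (a)--(h)) and then specialized in Corollary~\ref{cor1} and Lemma~\ref{th-uk}. You instead derive it by a direct pointwise case analysis: on the non-coincidence set the equation holds and $f_k \le f^*$ gives $\nu_k = 0$, while on the coincidence set the a.e.\ identity $\Delta u_k = \Delta u_{k-1}$ (second derivatives of an $H^2$ function vanishing a.e.\ on its zero set, applied twice) gives $\nu_k = \nu_{k-1}$; the resulting monotonicity $\nu_k \le \nu_{k-1}$ yields the same uniform majorant $f^* + (-\Delta u_0 - f^*)_+ = f^* \vee (-\Delta u_0)$. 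This is more elementary and arguably more transparent once the $W^{2,2}$-regularity and the pointwise complementarity $(\mu_k)(u_k - u_{k-1}) = 0$ are in hand; note, however, that you still need the full regularity theory of Section~\ref{sec-vi} to justify those two ingredients, so the dual-obstacle machinery is not actually avoided, only the final inequality is re-proved by hand. Both routes buy the same thing: an estimate independent of any time-regularity of $f$, which is the point of hypothesis \eqref{f-hypo1}.
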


\begin{Rem}[Assumptions on $f$]\label{R:f-hypo1}
{\rm
Condition \eqref{f-hypo1} is weaker than $f \in L^\infty(Q_T)$ or $f \in
 W^{1,1}(0,T;L^2(\Omega))$ (cf.~\cite{Arai}). Indeed, if $f \in W^{1,1}(0,T;L^2(\Omega))$,
 then $f^*(x) := f(x,0) + \int^T_0 |\partial_t f(x,t)| \, \d t$ belongs to
 $L^2(\Omega)$ and satisfies \eqref{f-hypo1}.
On the other hand, \eqref{f-hypo1} is stronger than $f_+ := f \vee 0 \in
 L^\infty(0,T;L^2(\Omega))$. In fact, \eqref{f-hypo1} yields
 $f_+ \in L^\infty(0,T;L^2(\Omega))$. However, even if $f_+
 \in L^\infty(0,T;L^2(\Omega))$, \eqref{f-hypo1} might not
 hold true. One may easily find a counterexample, e.g., $f(x,t) =
 |x-t|^{-\alpha}$, $\Omega = (0,1)$, $T = 1$ and $0 < \alpha < 1/2$.
}
\end{Rem}

Theorem \ref{T:ex} will be proved in Section \ref{sec-td}. Our method of
proof relies on the backward-Euler time-discretization of \eqref{is1}.
The discretized equation will be rewritten as some variational
inequalities of obstacle type. Moreover,
developing a regularity theory for such variational inequalities, we
shall obtain a new a priori estimate for discretized solutions. As a
result, we shall construct a strong solution without assuming the
differentiability of $f$ in $t$, which is usually assumed in standard
frameworks of doubly nonlinear evolution equations such as \eqref{dnp}
(see, e.g.,~\cite{Barbu75,Arai,Senba}).

Moreover, such a basic strategy of proving the existence result will be
also applied to prove the following \emph{comparison theorem} for strong
solutions of \eqref{is1}--\eqref{ic}:

\begin{Th}[Comparison principle]\label{T:comp}
Let $T > 0$ and suppose that \eqref{A1condition} is satisfied.
For each $i = 1,2$, let $u_0^i\in X\cap V$ and $f^i\in L^2(Q_T)$ be such
 that there exists $f^*\in L^2(\Omega)$ satisfying
\begin{align*}
f^i(x,t)\le f^*(x)\quad \mbox{a.e.~in } \, Q_T.
\end{align*}
For $i=1,2$, let $u^i=u^i(x,t)$ be the unique strong solution of
 \eqref{is1}--\eqref{ic} with $u_0=u_0^i$ and $f=f^i$ on $[0,T]$. 
If $u_0^1\le u_0^2$ a.e.~in $\Omega$ and $f^1\le f^2$ a.e.~in $Q_T$, 
then $u^1\le u^2$ a.e.~in $Q_T$.
\end{Th}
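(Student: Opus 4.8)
The plan is to reuse the backward-Euler time-discretization from the proof of Theorem~\ref{T:ex}, combine it with the comparison principle for variational inequalities of obstacle type established in Section~\ref{sec-vi}, and transfer the resulting discrete comparison to the limit. Fix a step $\tau = T/N$ and nodes $t_k = k\tau$. For $i=1,2$ let $(u_k^i)_{k=0}^N$ be the discrete solutions with $u_0^i$ the prescribed initial data and $f_k^i := \tau^{-1}\int_{t_{k-1}}^{t_k} f^i(\cdot,s)\,\d s$. The key observation is that the backward-Euler scheme for \eqref{is1} at step $k$ is equivalent to the obstacle problem: find $u_k^i \in V$ with $u_k^i \ge u_{k-1}^i$ such that
\[
\tau^{-1}(u_k^i - u_{k-1}^i, v - u_k^i) + (\nabla u_k^i, \nabla(v-u_k^i)) \ge (f_k^i, v - u_k^i) \quad \text{for all } v \in V,\ v \ge u_{k-1}^i ,
\]
that is, a variational inequality governed by $A_{1/\tau}$ (cf.\ \eqref{def:A-lam}) with lower obstacle $u_{k-1}^i$ and datum $\tau^{-1} u_{k-1}^i + f_k^i$. (Testing with $v = u_k^i + \psi$, $\psi \ge 0$, and with $v = u_{k-1}^i$ recovers the complementarity form $\tau^{-1}(u_k^i - u_{k-1}^i) = (\Delta u_k^i + f_k^i)_+$, so the scheme genuinely discretizes \eqref{is1}.)

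I would then establish the comparison at the discrete level by induction on $k$. The base case is the hypothesis $u_0^1 \le u_0^2$. Assuming $u_{k-1}^1 \le u_{k-1}^2$, the two obstacles satisfy $u_{k-1}^1 \le u_{k-1}^2$ and the two data satisfy $\tau^{-1} u_{k-1}^1 + f_k^1 \le \tau^{-1} u_{k-1}^2 + f_k^2$, since $f^1 \le f^2$ a.e.\ forces $f_k^1 \le f_k^2$ by monotonicity of the time-average. Invoking the comparison theorem for obstacle problems from Section~\ref{sec-vi} — which asserts that, for the coercive operator $A_{1/\tau}$, raising both the lower obstacle and the datum raises the solution — I obtain $u_k^1 \le u_k^2$. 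This closes the induction, yielding $u_k^1 \le u_k^2$ a.e.\ in $\Omega$ for every $k = 0,\dots,N$.

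Finally I would pass to the limit $\tau \to 0$. Let $\bar u_\tau^i$ denote the piecewise-constant-in-time interpolants of $(u_k^i)_k$; since their values are among the $u_k^i$, the discrete ordering gives $\bar u_\tau^1 \le \bar u_\tau^2$ a.e.\ in $Q_T$ for every $\tau$. The a priori estimates and compactness of Section~\ref{sec-td} provide, along a subsequence, strong convergence of $\bar u_\tau^i$ in $L^2(Q_T)$ to a strong solution of \eqref{is1}--\eqref{ic} with data $(u_0^i, f^i)$; by the uniqueness Theorem~\ref{th-uni} that limit is exactly $u^i$. As the cone $\{(w^1,w^2) : w^1 \le w^2\}$ is closed in $L^2(Q_T)\times L^2(Q_T)$, the discrete ordering survives in the limit and $u^1 \le u^2$ a.e.\ in $Q_T$.

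I expect the decisive step to be the discrete comparison, specifically the verification that the obstacle-problem comparison theorem of Section~\ref{sec-vi} applies to a simultaneous change of \emph{obstacle} and \emph{datum}: this is precisely where the monotonicity of $A_{1/\tau}$ and the lower-obstacle constraint must cooperate, and where the peculiar obstacle reformulation of the scheme pays off. By contrast, the order-preservation of the averaging of $f$, the inductive propagation of the ordering, and the closedness of the pointwise inequality under $L^2$-convergence are routine, while the identification of the limit relies only on the already-established uniqueness.
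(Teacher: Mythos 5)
Your proposal is correct and follows essentially the same route as the paper: reformulate each backward-Euler step as an obstacle problem for the coercive operator $A_{1/\tau_k}$, apply the elliptic comparison principle (Theorem~\ref{T:comp-ell}) inductively in $k$ using the ordering of both obstacles and data, and pass to the limit via the compactness of Section~\ref{sec-td} together with the uniqueness Theorem~\ref{th-uni}. The only cosmetic difference is your restriction to a uniform time step, which is immaterial.
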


The peculiar discretization argument will be also employed to
investigate the long-time behavior of solutions for
\eqref{is1}--\eqref{ic}. Furthermore, the comparison theorem stated above
will play a crucial role to identify the limit of each solution $u =
u(x,t)$ as $t \to \infty$.

\begin{Th}[Convergence of solutions as $t \to \infty$]\label{T:asp}
 Let $u_0 \in X \cap V$ and assume that \eqref{A1condition} holds and that
\begin{enumerate}[{\rm (H1)}]
 \item
      $\cH^{n-1}(\GammaD) > 0$\,\/{\rm ;}
 \item
      there exists a function $f_\infty \in L^2(\Omega)$ such
      that $f - f_\infty$ belongs to $L^2(0,\infty;L^2(\Omega))$\/{\rm ;}
 \item 
      $f \in L^\infty(0,\infty ; L^2(\Omega))$, and \eqref{f-hypo1} is
      satisfied. 
\end{enumerate}
 Then the unique solution $u = u(x,t)$ of \eqref{is1}--\eqref{ic} on
 $[0,\infty)$ converges to a function $\stasol =
 \stasol(x) \in X \cap V$ strongly in $V$ as $t \to \infty$. Moreover, the
 limit $\stasol$ satisfies
$$
 \stasol \geq u_0 \quad \mbox{ and } \quad - \Delta \stasol \geq f_\infty
 \ \mbox{ a.e.~in } \Omega.
$$
 
 In addition, if $f(x,t) \leq f_\infty(x)$ for a.e.~$(x,t) \in Q$, then
 the limit $\stasol$ coincides with the unique solution $\bar \stasol \in X
 \cap V$ of the following variational
 inequality\/{\rm :}
$$
\mbox{\rm (VI)($u_0, f_\infty$)}
\begin{cases}
\ \displaystyle \bar \stasol \in K_0(u_0) := \{ v \in V \colon v \geq u_0
 \ \mbox{ a.e.~in } \Omega\},\\[1mm]
\ \displaystyle \int_\Omega \nabla \bar \stasol \cdot \nabla (v -
 \bar \stasol) \, \d x
\geq \int_\Omega f_\infty (v - \bar \stasol) \, \d x \quad \mbox{ for all
 } \ v \in K_0(u_0).
\end{cases}
$$
\end{Th}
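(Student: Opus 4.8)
The plan is to exploit the monotonicity of the flow in $t$ together with a modified-energy estimate to obtain compactness, and then to identify the limit through the associated obstacle variational inequality. Since $\partial_t u=(\Delta u+f)_+\ge 0$ a.e., $u(x,\cdot)$ is nondecreasing for a.e.\ $x$. To produce a priori bounds I would test the equation against $\partial_t u$ by means of the modified energy $\mathcal E(t):=\frac12\int_\Omega|\nabla u(t)|^2\,\d x-\int_\Omega f_\infty u(t)\,\d x$. Writing $w:=\Delta u+f$ and $g:=f-f_\infty$, the chain rule of Lemma~\ref{lemforuq} (see Remark~\ref{R:def}) and integration by parts (licit because $u(t)\in X\cap V$ for a.e.\ $t$) give
\[
\tfrac{\d}{\d t}\mathcal E(t)=-\int_\Omega(w-g)(w)_+\,\d x=-\|\partial_t u(t)\|_{L^2}^2+\int_\Omega g\,\partial_t u\,\d x\le-\tfrac12\|\partial_t u(t)\|_{L^2}^2+\tfrac12\|g(t)\|_{L^2}^2 ,
\]
since $w(w)_+=(w)_+^2=|\partial_t u|^2$. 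By (H2) $g\in L^2(0,\infty;L^2(\Omega))$, and by (H1) the Dirichlet form is coercive on $V$ (Poincar\'e), so $\mathcal E$ is bounded below up to a constant. Integrating over $[0,t]$ then yields $\sup_{t>0}\|u(t)\|_V<\infty$ and $\int_0^\infty\|\partial_t u\|_{L^2}^2\,\d t<\infty$. Monotonicity and the uniform $L^2$-bound force $u(x,t)\uparrow\stasol(x)$ a.e.\ with $\stasol\in L^2(\Omega)$, $\stasol\ge u_0$, and $u(t)\to\stasol$ strongly in $L^2(\Omega)$ by monotone convergence; the $V$-bound gives $\stasol\in V$ and $u(t)\rightharpoonup\stasol$ weakly in $V$.

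The delicate point—and the step I expect to be the main obstacle—is to upgrade weak to strong convergence and to reach $\stasol\in X$. Here I would invoke the uniform (in $t$) a priori estimate delivered by the backward-Euler discretization and the regularity theory of Section~\ref{sec-vi}: under \eqref{f-hypo1} it should provide $\sup_{t>0}\|(-\Delta u(t)-f^*)_+\|_{L^2}<\infty$, a one-sided control of $\Delta u$ inherited step by step from the obstacle inequalities. Granting this, from $-\Delta u=g+f_\infty-\partial_t u+(w)_-$ and $(w)_-=(\Delta u+f)_-\le(-\Delta u-f^*)_++(f^*-f)$, and choosing (using $\int_0^\infty\|\partial_t u\|^2<\infty$, (H2) and (H3)) a sequence $t_n\to\infty$ with $\|\partial_t u(t_n)\|_{L^2}+\|f(t_n)-f_\infty\|_{L^2}\to0$, I obtain $\sup_n\|\Delta u(t_n)\|_{L^2}<\infty$. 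Thus $\{u(t_n)\}$ is bounded in $H^2$, its weak $H^2$-limit is necessarily the $L^2$-limit $\stasol$, so $\stasol\in X\cap V$, and by the compact embedding $H^2\hookrightarrow H^1$ one gets $u(t_n)\to\stasol$ strongly in $V$. Finally $\mathcal E(t)$ converges as $t\to\infty$ (the map $t\mapsto\mathcal E(t)-\frac12\int_0^t\|g\|^2$ is nonincreasing and bounded below), whence $\|\nabla u(t)\|_{L^2}$ converges; matching its limit with $\|\nabla\stasol\|_{L^2}$ along $t_n$ and combining with weak convergence upgrades the whole trajectory to $u(t)\to\stasol$ strongly in $V$.

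For the limit inequality, testing the pointwise relation $\partial_t u-\Delta u-f=(w)_-\ge0$ against $\phi\in V$ with $\phi\ge0$ gives $\int_\Omega\partial_t u\,\phi\,\d x+\int_\Omega\nabla u\cdot\nabla\phi\,\d x-\int_\Omega f\phi\,\d x\ge0$; passing to the limit along $t_n$ (using $\partial_t u(t_n)\to0$, $u(t_n)\rightharpoonup\stasol$ in $H^2$ and $f(t_n)\to f_\infty$) yields $\int_\Omega\nabla\stasol\cdot\nabla\phi\,\d x\ge\int_\Omega f_\infty\phi\,\d x$ for all such $\phi$, that is $-\Delta\stasol\ge f_\infty$ a.e.\ (since $\stasol\in X$). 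Together with $\stasol\ge u_0$ this is the general assertion.

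It remains to identify $\stasol$ with $\bar\stasol$ when $f\le f_\infty$. The solution $\bar\stasol$ of (VI)$(u_0,f_\infty)$ lies in $X\cap V$ and satisfies $-\Delta\bar\stasol\ge f_\infty\ge f$, so it is a time-independent supersolution with $\bar\stasol\ge u_0$; the comparison principle (Theorem~\ref{T:comp}, applied with common bound $f^*\vee f_\infty$) then gives $u(t)\le\bar\stasol$, hence $\stasol\le\bar\stasol$. For the converse I would use that $\bar\stasol$ is the least member of $\{v\in K_0(u_0)\colon -\Delta v\ge f_\infty\ \text{weakly}\}$: testing the variational inequality for $\bar\stasol$ with $w=\bar\stasol\wedge v\in K_0(u_0)$ and the supersolution inequality for $v$ with $\phi=(\bar\stasol-v)_+$, then subtracting, gives $\int_\Omega|\nabla(\bar\stasol-v)_+|^2\,\d x\le0$, so $(\bar\stasol-v)_+$ is constant, hence $0$ by (H1) and Poincar\'e; thus $v\ge\bar\stasol$. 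Taking $v=\stasol$, which satisfies $\stasol\ge u_0$ and $-\Delta\stasol\ge f_\infty$, yields $\stasol\ge\bar\stasol$, and therefore $\stasol=\bar\stasol$.
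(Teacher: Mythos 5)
Your proposal is correct in substance and follows the same overall strategy as the paper: the modified energy $E(v)=\frac12\|\nabla v\|_{L^2(\Omega)}^2-(f_\infty,v)_{L^2(\Omega)}$ tested against $\partial_t u$ to get $\int_0^\infty\|\partial_t u\|_{L^2(\Omega)}^2\,\d t<\infty$ and $\sup_t\|\nabla u(t)\|_{L^2(\Omega)}<\infty$; a sequence of good times along which $\partial_t u\to0$ and $f\to f_\infty$; a uniform one-sided bound on $-\Delta u$ inherited from the discrete obstacle problems to upgrade to an $H^2$-bound; and comparison in both directions to identify $\stasol$ with $\bar\stasol$. Where you genuinely diverge: (i) you use the monotonicity $u(x,\cdot)\uparrow$ plus dominated convergence to get strong $L^2$-convergence of the \emph{whole} trajectory, whereas the paper extracts a subsequence via the compact embedding $V\hookrightarrow L^2(\Omega)$; (ii) you identify $-\Delta\stasol\ge f_\infty$ by passing to the limit in the weak form of $\partial_t u-\Delta u-f\ge0$ tested against nonnegative $\varphi\in V$, whereas the paper goes through the demiclosedness of the maximal monotone operator $\partial\I$ (Proposition \ref{P:demiclo}) to obtain $\xi\in\partial\I(0)$ — both are valid, and yours is the more elementary; (iii) for full-trajectory strong $V$-convergence you match the limit of the (convergent) energy $E(u(t))$ with $\|\nabla\stasol\|_{L^2(\Omega)}$ along $t_n$, while the paper subtracts the stationary inclusion and integrates over $(s_n,\tau)$ to get $\sup_{\tau\ge s_n}\|\nabla(u(\tau)-\stasol)\|_{L^2(\Omega)}^2\to0$ — again both work; (iv) your minimality argument for $\bar\stasol$ (testing with $\bar\stasol\wedge v$ and $(\bar\stasol-v)_+$) is a self-contained re-derivation of what the paper cites as Lemma \ref{lem-cp-vi} with $\sigma=0$. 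The one step you assert rather than prove is the uniform bound $\sup_{t>0}\|(-\Delta u(t)-f^*)_+\|_{L^2(\Omega)}<\infty$; this is precisely what the paper establishes at the start of Section \ref{sec-ab} by iterating \eqref{c-K2} in the form $h_k\le h_{k-1}\vee f^*\le\cdots\le(-\Delta u_0)\vee f^*$ (equivalently, $-\Delta u_k\le f^*\vee(-\Delta u_0)$ with a bound independent of the time horizon, already visible in the proof of Theorem \ref{T:ex}), so the mechanism you name is the right one, but a complete write-up would need to carry out that iteration and pass it to the limit $|\tau|\to0$ as the paper does.
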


\begin{Rem}
{\rm
Assumption (H1) is essentially required to ensure the convergence of
the solution $u = u(x,t)$ as $t \to \infty$. Indeed, suppose that
 $\GammaD = \emptyset$ (i.e., $\GammaN = \Gamma$) and set $u_0(x) \equiv
 1$ and $f(x,t) \equiv 1$. The unique strong solution of
 \eqref{is1}--\eqref{ic} is given by
$$
u(x,t) = 1 + t \quad \mbox{ for } \ (x,t) \in Q,
$$
and then, $u(x,t)$ is divergent to $\infty$ at each $x \in \Omega$ as $t
 \to \infty$.
}
\end{Rem}

\section{Regularity theory for variational inequalities of obstacle
 type}\label{sec-vi}

In this section, based on the approach of Gustafsson
\cite{Gus86}, we revisit a regularity theory for variational
inequalities of obstacle type. In classical literature on variational inequalities of
obstacle type, the $W^{2,p}(\Omega)$-regularity of solutions is often obtained
by using a penalization technique (see, e.g.,~\cite[Chap.~4]{K-S80},~
\cite{Fri88}). On the other hand, Gustafsson \cite{Gus86} gave a simpler
alternative proof by introducing an auxiliary variational inequality and
by proving the coincidence of solutions for both problems. Let us also
 remark that, in previous results, it is assumed that
$W^{2,p}(\Omega)\subset C(\ov{\Omega})$ (namely, $p>n/2$) in order to
utilize the classical maximum principle for linear elliptic equations.

We shall establish a $W^{2,p}(\Omega)$-regularity
result for variational inequalities of obstacle type equipped with a mixed
boundary condition by properly modifying the argument of
Gustafsson~\cite{Gus86}.
It is noteworthy that we do not assume that $W^{2,p}(\Omega) \subset
C(\ov{\Omega})$, i.e., $p > n/2$, as we employ Stampacchia's truncation
technique instead of the classical maximum principle.

First, we shall set up notation.
For $\sigma \geq 0$, let $A:=A_\sigma \in B(V,V')$ be defined as in
\eqref{def:A-lam}. Define a symmetric bilinear form
$a(\cdot,\cdot) : V\times V \to \mathbb R$ associated with $A$ by
\begin{align}\label{a1}
a(u,v):=
\langle A u, v\rangle_V =
\int_\Omega \left( \nabla u \cdot \nabla v + \sigma uv \right)\,\d x
\quad
\mbox{ for } \ u,v\in V.
\end{align}
Throughout this section, we assume that
\begin{equation}\label{A1}
\sigma >0\quad \mbox{if}\quad \cH^{n-1}(\GammaD)=0.
\end{equation}
Under the condition \eqref{A1}, (by the Poincar\'{e} inequality for the
case that $\sigma = 0$ and $\cH^{n-1}(\GammaD) > 0$), $a(\cdot,\cdot)$
turns out to be coercive on $V\times V$. Hence $A$ is invertible, and
$A^{-1}$ belongs to $B(V',V)$. Let $f\in V'$ and $\psi\in V$ and define
closed convex subsets $K_0$, $K_1$ of $V$ by
\begin{align}
K_0&:=\{v\in V \colon v\ge \psi\mbox{ a.e.~in }\Omega\},\label{K0}\\
K_1&:=\{v\in V \colon Av\ge f \mbox{ in }V'\}.\nonumber
\end{align}
Here the inequality $Av\ge f$ in $V'$ means that $\langle Av-f, \varphi
\rangle_V \geq 0$ for all $\varphi \in V$ satisfying $\varphi \geq 0$
a.e.~in $\Omega$. 
We also define functionals $J,\hat J$ on $V$ by 
\begin{align}
J(v)&:=\dfrac 1 2 a(v,v) -\langle f,v\rangle_V
\quad  \mbox{ for } \ v\in V, \label{a2}\\
\hat{J}(v)&:= \dfrac 1 2 a(v,v) -\langle \hat{f},v\rangle_V
\quad \mbox{ for } \ v\in V, \nonumber
\end{align}
where $\hat{f}:=A\psi\in V'$. Then the following equivalence holds true:

\begin{Prop}\label{th1}
Suppose \eqref{A1} and let $f\in V'$ and $\psi\in V$.
Then the following five conditions for $u \in V$ are equivalent
to each other\/{\rm :}
\begin{enumerate}[{\rm (a)}]
\item
$u\in K_0$, $J(u)\le J(v)$ for all $v\in K_0$,
\item
$u\in K_0$, $a(u,v-u)\ge \langle f,v-u\rangle_V$ for all $v\in K_0$,
\item
$u\in K_0\cap K_1$, $\langle Au-f,u-\psi\rangle_V =0$,
\item
$u\in K_1$, $a(u,v-u)\ge \langle \hat{f},v-u\rangle_V$ for all $v\in K_1$,
\item
$u\in K_1$, $\hat{J}(u)\le \hat{J}(v)$ for all $v\in K_1$.
\end{enumerate}
Moreover, there exists a unique element $u \in V$ satisfying all the
 conditions. 
\end{Prop}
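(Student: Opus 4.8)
The plan is to split the five conditions into the two ``primal/variational'' pairs (a)--(b) and (d)--(e), which are handled by standard convex analysis, to tie (b) to the complementarity condition (c) by direct test-function arguments, to bridge the two obstacle formulations through the single implication (c)$\Rightarrow$(d), and to close the circle by uniqueness rather than proving the delicate reverse implication (d)$\Rightarrow$(c) by hand.

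First I would record that, under \eqref{A1}, the form $a(\cdot,\cdot)$ is bounded, symmetric and coercive on $V\times V$, so that $J$ and $\hat J$ are continuous, strictly convex, coercive and G\^ateaux differentiable with $J'(u)=Au-f$ and $\hat J'(u)=Au-\hat f$ in $V'$. Since $K_0$ and $K_1$ are nonempty ($\psi\in K_0$ and $A^{-1}f\in K_1$, the latter because $A(A^{-1}f)=f\ge f$) closed convex subsets of $V$, the direct method produces a unique minimizer of $J$ over $K_0$ and a unique minimizer of $\hat J$ over $K_1$. The Euler--Lagrange characterization of a constrained minimizer of a convex differentiable functional, $\langle J'(u),v-u\rangle_V\ge 0$ for all admissible $v$, then yields (a)$\Leftrightarrow$(b) and (d)$\Leftrightarrow$(e) at once.

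Next I would establish (b)$\Leftrightarrow$(c). For (b)$\Rightarrow$(c), testing $\langle Au-f,v-u\rangle_V\ge 0$ with $v=u+\varphi$ for arbitrary $\varphi\ge 0$ (admissible since $u+\varphi\ge u\ge\psi$) gives $Au\ge f$, i.e.\ $u\in K_1$; testing with $v=\psi\in K_0$ gives $\langle Au-f,u-\psi\rangle_V\le 0$, while $u\in K_0\cap K_1$ forces the reverse inequality, whence the complementarity identity $\langle Au-f,u-\psi\rangle_V=0$. Conversely, for (c)$\Rightarrow$(b) and any $v\in K_0$ I would decompose $v-u=(v-\psi)-(u-\psi)$ and combine $Au\ge f$ with $v-\psi\ge 0$ and the complementarity identity to recover $\langle Au-f,v-u\rangle_V\ge 0$.

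The crux is the bridge (c)$\Rightarrow$(d), where the symmetry of $a$ and the definition $\hat f=A\psi$ are exactly what is needed. For $v\in K_1$, writing $Au-\hat f=A(u-\psi)$ and using the symmetry of $a$ to move the factor $u-\psi$ into the pairing, I obtain
$$
\langle Au-\hat f,\,v-u\rangle_V
=\langle A(v-u),\,u-\psi\rangle_V
=\langle Av-f,\,u-\psi\rangle_V-\langle Au-f,\,u-\psi\rangle_V .
$$
The last term vanishes by the complementarity identity from (c), while the first is nonnegative because $v\in K_1$ gives $Av\ge f$ and $u\in K_0$ gives $u-\psi\ge 0$; this is precisely the variational inequality in (d). To finish, I would invoke uniqueness: the common solution of (a)--(c) also solves (d) by the implication just proved, and since (d) (equivalently (e)) has its own unique solution, the two coincide. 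As each of the five conditions is then satisfied by exactly one element of $V$, they are mutually equivalent and simultaneously deliver the asserted existence and uniqueness. I expect the only real subtlety to be conceptual: recognizing that the reverse implication (d)$\Rightarrow$(c), which would otherwise demand a Stampacchia-type truncation to force $u\ge\psi$ from the $K_1$-problem, can be avoided altogether by this uniqueness argument.
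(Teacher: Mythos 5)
Your proof is correct, and most of it coincides with the paper's: the pairs (a)--(b) and (d)--(e) via the standard Euler--Lagrange characterization of constrained minimizers, (b)$\Leftrightarrow$(c) by the same test functions (your use of $u\in K_0\cap K_1$ to obtain the reverse inequality $\langle Au-f,u-\psi\rangle_V\ge 0$ replaces the paper's substitution $v=2u-\psi$, an immaterial variant), and the bridge (c)$\Rightarrow$(d) by exactly the same symmetry computation $\langle Au-\hat f,v-u\rangle_V=\langle Av-f,u-\psi\rangle_V-\langle Au-f,u-\psi\rangle_V$. The genuine difference is how the loop is closed. The paper proves (d)$\Rightarrow$(c) directly: it rewrites (d) as $\langle Av-Au,\,u-\psi\rangle_V\ge 0$ for all $v\in K_1$, substitutes $v=u+A^{-1}\varphi$ with $\varphi\in L^2(\Omega)$, $\varphi\ge 0$ (admissible since $A^{-1}\in B(V',V)$ and $A(u+A^{-1}\varphi)=Au+\varphi\ge f$) to get $(\varphi,u-\psi)_{L^2(\Omega)}\ge 0$ and hence $u\in K_0$, and then substitutes $v=A^{-1}f$ and $v=2u-A^{-1}f$ to obtain the complementarity identity --- so no Stampacchia-type truncation is needed there, contrary to what you anticipate (truncation enters only later, in the proof of Theorem~\ref{th2}). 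You instead close the circle by counting solutions: $\hat J$ is coercive and strictly convex on the nonempty ($A^{-1}f\in K_1$) closed convex set $K_1$, so (e), hence (d), has exactly one solution, which must be the element already known to satisfy (a)--(c) via the chain (c)$\Rightarrow$(d). Both routes are valid; yours is shorter and dispenses with the extra substitutions, while the paper's direct argument makes visible the duality between the two obstacle problems (the roles of $(K_0,f)$ and $(K_1,\hat f)$ are interchanged under $A^{-1}$), a structure it exploits again in Section~\ref{sec-vi}.
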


\begin{proof}
Since $J$ is a coercive, continuous, strictly convex
functional on the closed convex set $K_0$, $J$ admits a unique
minimizer $u$ over $K_0$. Hence $u$ satisfies (a). 

We shall prove the equivalence of the conditions (a)--(e).
It is well known (see, e.g.,~\cite{K-S80}) that (a) $\Leftrightarrow$
 (b) and (d) $\Leftrightarrow$ (e).
So, let us here start with showing that (b) $\Rightarrow$ (c). 
The condition (b) is equivalently rewritten by
\begin{align}\label{alt}
u\in K_0,
\quad
\langle Au-f,v-u\rangle_V \ge 0
\quad
\mbox{for all }v\in K_0.
\end{align}
For any $\varphi\in V$ with $\varphi\ge 0$ a.e.~in $\Omega$, 
substituting $v=u+\varphi\in K_0$ to \eqref{alt},
we have
$\langle Au-f,\varphi\rangle_V \ge 0$,
which yields that $u\in K_1$. 
On the other hand, substitute $v=\psi\in K_0$ and $v=2u-\psi\in K_0$ to
\eqref{alt}. Then one can obtain $\langle Au-f,u-\psi\rangle_V
=0$. Hence (c) holds.

To prove the inverse relation, (c) $\Rightarrow$ (b), 
let $u$ satisfy (c). For any $v\in K_0$, we see that
\begin{align*}
a(u,v-u)-\langle f,v-u\rangle_V
=
\langle Au-f,v-\psi\rangle_V
-\langle Au-f,u-\psi \rangle_V.
\end{align*}
Since the first term of the right-hand side is nonnegative (by $u \in
K_1$ and $v \in K_0$) and the second term vanishes (by the equation of (c)), the
condition (b) follows.

One may prove the equivalence between (c) and (d) in a similar fashion to
the above. Firstly, suppose that $u$ satisfies (c). For any
$v\in K_1$, one finds that
\begin{align*}
a(u,v-u)- \langle \hat{f},v-u\rangle_V
&= \langle Av - Au, u \rangle_V - \langle Av - Au, \psi \rangle_V\\
&= \langle Av-f,u-\psi\rangle_V
-\langle Au-f,u-\psi \rangle_V.
\end{align*}
Here we used the fact that $\langle Aw,z \rangle_V = \langle Az,w
 \rangle_V$ for all $w,z \in V$.
Noting that the right-hand side is non-negative by (c) and the fact that
 $u \in K_0$ and $v \in K_1$, one can get (d).
To check the inverse relation, we also rewrite (d) as
\begin{align}\label{alt2}
u\in K_1,
\quad
\langle Av-Au,u-\psi\rangle_V \ge 0
\quad
\mbox{for all }v\in K_1.
\end{align}
For any $\varphi\in L^2(\Omega)$ with $\varphi\ge 0$ a.e.~in $\Omega$, 
substituting $v=u+A^{-1}\varphi\in K_1$ to \eqref{alt2},
we have $(\varphi,u-\psi)_{L^2(\Omega)} \ge 0$, which along with the
arbitrariness of $\varphi \geq 0$ implies $u\in
K_0$. Moreover, let us also substitute $v=A^{-1}f\in K_1$ and
$v=2u-A^{-1}f\in K_1$ in \eqref{alt2}. Then we obtain
$\langle Au-f,u-\psi\rangle_V =0$, whence (c) follows.
%
Consequently, all the conditions (a)--(e) are equivalent.
\end{proof}

In the rest of this section, let $p\in\R$ satisfy
\begin{align}\label{c1}
1<p<\infty,
\quad
p\ge \frac{2n}{n+2}.
\end{align}
Since the H\"{o}lder conjugate $q:=p/(p-1)$ of $p$ satisfies $q\le
2^*:=2n/(n-2)$ if $n\ge 3$, and $\Omega$ is a Lipschitz domain, by Sobolev's
embedding theorem, the continuous embeddings $V\hookrightarrow
L^q(\Omega)$ and $L^p(\Omega) \cong (L^q(\Omega))' \hookrightarrow V'$
hold true. We also note that $W^{2,p}(\Omega)$ is continuously embedded in
$H^1(\Omega)$ by \eqref{c1}.

We further suppose that
\begin{align}\label{c2}
f\in L^p(\Omega),
\quad
\psi \in V, 
\quad
A\psi \in L^p(\Omega)
\end{align}
and introduce a closed (in $V$) convex set $K_2$ given by 
\begin{align*}
K_2:=\{v\in V \colon f\le Av\le f \vee \hat{f} \mbox{ in }V'
 \}\subset K_1.
\end{align*}

A main result of this section is stated as follows:

\begin{Th}\label{th2}
Suppose that \eqref{A1}, \eqref{c1} and \eqref{c2} are satisfied. Then each of
 the following conditions {\rm (f)--(h)} is equivalent to the conditions
 {\rm (a)--(e)} of Proposition~\ref{th1}\/{\rm :} 
\begin{enumerate}
\item[{\rm (f)}]
$u\in K_2$, $\hat{J}(u)\le \hat{J}(v)$ for all $v\in K_2$,
\item[{\rm (g)}]
$u\in K_2$, $a(u,v-u)\ge \langle \hat{f},v-u\rangle_V$ for all $v\in
	     K_2$,
\item[{\rm (h)}]
$u\in K_0\cap K_2$, $(Au-f)(u-\psi)=0$ a.e.~in
	     $\Omega$.
\end{enumerate}
\end{Th}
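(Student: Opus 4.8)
The plan is to treat the $K_2$-minimisation problem (f) as the primary object and to show that its solution coincides with the solution $u$ of the obstacle problem characterised by (a)--(e); all the conditions then collapse onto the same unique element. First I would dispose of (f) $\Leftrightarrow$ (g) exactly as (a) $\Leftrightarrow$ (b) was handled in Proposition~\ref{th1}: $\hat{J}$ is coercive, continuous and strictly convex, and $K_2$ is closed, convex and nonempty (it contains $A^{-1}(f\vee\hat f)$), so $\hat{J}$ admits a unique minimiser $u_2$ over $K_2$ and minimality is equivalent to the variational inequality (g). The whole theorem then reduces to the single assertion that $u_2$ satisfies (c) (equivalently $u_2=u$): once this is known, $u\in K_2$ gives (f) and (g) for $u$, uniqueness of the minimiser forces every (f)- or (g)-solution to equal $u$, and (h) is handled at the end.

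Second, I would read the complementarity information off (g). Rewriting it as $\langle Au_2-\hat f,\,v-u_2\rangle_V\ge 0$ for all $v\in K_2$ and parametrising $K_2$ by $v=A^{-1}\eta$ with $f\le\eta\le f\vee\hat f$, the symmetry of $a(\cdot,\cdot)$ together with $\hat f=A\psi$ collapses (g) to $\langle\eta-Au_2,\,u_2-\psi\rangle_V\ge 0$ for every admissible $\eta$. A preliminary observation makes this meaningful: $u_2\in K_2$ already forces $Au_2\in L^p(\Omega)$, since $Au_2-f\ge 0$ is a nonnegative measure dominated by $(f\vee\hat f)-f\in L^p(\Omega)$, hence absolutely continuous with density in $L^p(\Omega)$. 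The pairing is therefore the genuine integral $\int_\Omega(\eta-Au_2)(u_2-\psi)\,\d x$, and choosing $\eta$ pointwise (pushing it down to $f$ where $u_2>\psi$ and up to $f\vee\hat f$ where $u_2<\psi$) yields $Au_2=f$ a.e.\ on $\{u_2>\psi\}$ and $Au_2=f\vee\hat f$ a.e.\ on $\{u_2<\psi\}$.

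The main obstacle is to show $u_2\ge\psi$ a.e., i.e.\ that the auxiliary $K_2$-solution respects the obstacle; this is precisely where a maximum principle would classically enter, and where I would instead run Stampacchia's truncation. Put $w:=u_2-\psi$ and test with its negative part $(\psi-u_2)_+\in V$ (which lies in $V$ because $\psi,u_2$ vanish on $\GammaD$). On $\{u_2<\psi\}$ the previous step gives $Aw=Au_2-\hat f=(f-\hat f)_+\ge 0$, so $\langle Aw,(\psi-u_2)_+\rangle_V=\int_\Omega Aw\,(\psi-u_2)_+\,\d x\ge 0$. On the other hand, the chain rule for the positive part gives $a(w,(\psi-u_2)_+)=-a((\psi-u_2)_+,(\psi-u_2)_+)\le 0$ by coercivity. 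Since $a(w,(\psi-u_2)_+)=\langle Aw,(\psi-u_2)_+\rangle_V$, both sides vanish and coercivity forces $(\psi-u_2)_+=0$, that is $u_2\in K_0$.

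Finally I would assemble the equivalences. With $u_2\in K_0\cap K_2$ and $Au_2=f$ on $\{u_2>\psi\}$, the product $(Au_2-f)(u_2-\psi)$ vanishes a.e., so $u_2$ satisfies (h); integrating and using $Au_2-f\ge 0$ and $u_2-\psi\ge 0$ gives $\langle Au_2-f,u_2-\psi\rangle_V=0$, so $u_2$ satisfies (c) and hence $u_2=u$. Conversely, any solution of (h) lies in $K_0\cap K_2\subset K_0\cap K_1$ and, the integrand being nonnegative, satisfies $\langle Au-f,u-\psi\rangle_V=\int_\Omega(Au-f)(u-\psi)\,\d x=0$, i.e.\ (c); thus (h) too has $u$ as its unique solution. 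Combining this with (f) $\Leftrightarrow$ (g) and the uniqueness of the $K_2$-minimiser closes the loop and proves that (f), (g) and (h) are each equivalent to (a)--(e).
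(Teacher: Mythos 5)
Your proposal is correct and follows essentially the same route as the paper: it identifies the unique $\hat J$-minimiser over $K_2$ with the obstacle-problem solution by extracting complementarity from the variational inequality via pointwise-chosen elements $A^{-1}\eta\in K_2$ (the paper's substitutions $v=A^{-1}g$ and $v=A^{-1}f$ are exactly such choices), and then uses Stampacchia's truncation with $(\psi-u_2)_+$ in place of a maximum principle to show $u_2\ge\psi$, just as the paper does with $w_-=w\wedge 0$. The only differences are organizational (you obtain both complementarity identities at once and make explicit the domination argument giving $Au_2\in L^p(\Omega)$, which the paper states without proof).
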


We first prepare a couple of lemmas, which will be also used
in later sections.
\begin{Lem}\label{lem-trace}
Let $w\in H^1(\Omega)$ and set $w_+(x):=(w(x))_+$ for $x \in
 \Omega$. Then $w_+\in H^1(\Omega)$ and $\gamma_0(w_+)=(\gamma_0w)_+$
 $\cH^{n-1}$-a.e.~on $\Gamma$.
\end{Lem}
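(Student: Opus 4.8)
The plan is to prove the two assertions in turn: first that $w_+\in H^1(\Omega)$ together with the Stampacchia-type formula $\nabla w_+=\mathbf 1_{\{w>0\}}\nabla w$ a.e.\ in $\Omega$, and then to transfer the identity to $\Gamma$ by a density-and-continuity argument for the trace operator $\gamma_0$. For the membership I would smooth the positive-part function: for $\vep>0$ set $G_\vep(s):=\sqrt{s^2+\vep^2}-\vep$ for $s\ge 0$ and $G_\vep(s):=0$ for $s<0$. Then $G_\vep\in C^1(\R)$ with $G_\vep(0)=0$ and $0\le G_\vep'\le 1$, while $G_\vep(s)\to(s)_+$ uniformly and $G_\vep'(s)\to\mathbf 1_{\{s>0\}}$ pointwise as $\vep\to 0$. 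The chain rule for $C^1$ functions with bounded derivative gives $G_\vep(w)\in H^1(\Omega)$ with $\nabla G_\vep(w)=G_\vep'(w)\nabla w$. Using $|G_\vep'(w)\nabla w|\le|\nabla w|\in L^2(\Omega)$ and the classical fact that $\nabla w=0$ a.e.\ on $\{w=0\}$, dominated convergence yields $G_\vep(w)\to w_+$ in $L^2(\Omega)$ and $\nabla G_\vep(w)\to\mathbf 1_{\{w>0\}}\nabla w$ in $L^2(\Omega)^n$, whence $w_+\in H^1(\Omega)$.

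The same circle of ideas gives the continuity of the nonlinear map $w\mapsto w_+$ on $H^1(\Omega)$, which I will need for the trace. If $w_k\to w$ in $H^1(\Omega)$, I pass to a subsequence with $w_k\to w$ a.e.\ and decompose
$\mathbf 1_{\{w_k>0\}}\nabla w_k-\mathbf 1_{\{w>0\}}\nabla w=\mathbf 1_{\{w_k>0\}}(\nabla w_k-\nabla w)+(\mathbf 1_{\{w_k>0\}}-\mathbf 1_{\{w>0\}})\nabla w$.
The first summand tends to $0$ in $L^2$ since the indicator is bounded by $1$ and $\nabla w_k\to\nabla w$; the second tends to $0$ in $L^2$ by dominated convergence, because $\mathbf 1_{\{w_k>0\}}\to\mathbf 1_{\{w>0\}}$ a.e.\ on $\{w\ne 0\}$ and $\nabla w=0$ a.e.\ on $\{w=0\}$. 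Together with $(w_k)_+\to w_+$ in $L^2(\Omega)$ (from the $1$-Lipschitz bound $|(a)_+-(b)_+|\le|a-b|$), this shows $(w_k)_+\to w_+$ in $H^1(\Omega)$ along the subsequence; a standard subsequence argument removes the restriction and proves continuity for the full sequence.

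For the boundary identity I use that, $\Omega$ being Lipschitz, $C^\infty(\overline{\Omega})$ is dense in $H^1(\Omega)$, so I choose $w_k\in C^\infty(\overline{\Omega})$ with $w_k\to w$ in $H^1(\Omega)$. For each smooth $w_k$ the relation $\gamma_0((w_k)_+)=(\gamma_0 w_k)_+$ holds $\cH^{n-1}$-a.e.\ on $\Gamma$, since the trace is just boundary restriction. By the continuity of $w\mapsto w_+$ just established and the boundedness of $\gamma_0$ (as a map into $L^2(\Gamma)$, which follows from $\gamma_0\in B(H^1(\Omega),H^{1/2}(\Gamma))$ and $H^{1/2}(\Gamma)\hookrightarrow L^2(\Gamma)$), the left-hand sides converge: $\gamma_0((w_k)_+)\to\gamma_0(w_+)$ in $L^2(\Gamma)$. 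On the right, $\gamma_0 w_k\to\gamma_0 w$ in $L^2(\Gamma)$ and the $1$-Lipschitz continuity of $(\,\cdot\,)_+$ give $(\gamma_0 w_k)_+\to(\gamma_0 w)_+$ in $L^2(\Gamma)$. Passing to the limit yields $\gamma_0(w_+)=(\gamma_0 w)_+$ $\cH^{n-1}$-a.e.\ on $\Gamma$.

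The main obstacle is the gradient convergence: both the identification $\nabla w_+=\mathbf 1_{\{w>0\}}\nabla w$ and the $H^1$-continuity of $w\mapsto w_+$ hinge on the fact that $\nabla w$ vanishes a.e.\ on the level set $\{w=0\}$, which is exactly what renders harmless the discrepancy between $G_\vep'$ and $\mathbf 1_{\{\,\cdot\,>0\}}$, and between $\mathbf 1_{\{w_k>0\}}$ and $\mathbf 1_{\{w>0\}}$, in the respective limits. Once this is in hand, the remaining steps are routine chain-rule and trace-continuity bookkeeping.
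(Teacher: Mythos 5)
Your proof is correct, and its overall architecture coincides with the paper's: first establish $w_+\in H^1(\Omega)$ together with the gradient formula $\nabla w_+=\mathbf{1}_{\{w>0\}}\nabla w$, then approximate $w$ in $H^1(\Omega)$ by functions continuous up to $\overline{\Omega}$ (for which the trace identity is immediate), and finally pass to the limit using the boundedness of $\gamma_0$ into $L^2(\Gamma)$ and the $1$-Lipschitz continuity of $(\,\cdot\,)_+$. The one step where you genuinely diverge is how the strong $H^1$-convergence $(w_k)_+\to w_+$ is obtained. The paper proves it only for the chosen approximating sequence, by combining the bound $\|(w_n)_+\|_{H^1(\Omega)}\le\|w_n\|_{H^1(\Omega)}$ (giving weak compactness), the identity $\|\nabla(w_n)_+\|_{L^2(\Omega)}^2=(\nabla(w_n)_+,\nabla w_n)_{L^2(\Omega)}$ (giving convergence of norms), and the uniform convexity of $H^1(\Omega)$. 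You instead prove the stronger statement that $w\mapsto w_+$ is sequentially continuous on all of $H^1(\Omega)$, via the decomposition of $\mathbf{1}_{\{w_k>0\}}\nabla w_k-\mathbf{1}_{\{w>0\}}\nabla w$ and dominated convergence, with the classical fact that $\nabla w=0$ a.e.\ on $\{w=0\}$ absorbing the possible non-convergence of the indicators on that level set; this is more direct but leans on that level-set fact, which the paper's weak-convergence argument sidesteps. You also rederive the Stampacchia formula by smoothing $(\,\cdot\,)_+$ with $G_\vep$ rather than citing Theorem~A.1 of Kinderlehrer--Stampacchia as the paper does; that is self-contained and standard. I see no gaps.
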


\begin{proof}
Set $\Omega_+:=\{x\in\Omega \colon w(x)>0\}$ and recall Theorem~A.1.\,of
 \cite{K-S80} to observe that $w_+\in H^1(\Omega)$ and
\begin{align}\label{th-KS}
w_+=\begin{cases}
     w&\mbox{a.e.~in }\Omega_+,\\
     0&\mbox{a.e.~in }\Omega\setminus \Omega_+,
    \end{cases}
\quad
\nabla w_+=\begin{cases}
	    \nabla w&\mbox{a.e.~in }\Omega_+,\\
	    0&\mbox{a.e.~in }\Omega\setminus \Omega_+.
	   \end{cases}
\end{align}
Set $W := C(\ov{\Omega})\cap H^1(\Omega)$.
Since $W$ is dense in $H^1(\Omega)$, there exists a sequence
$\{w_n\}$ in $W$ such that $w_n\to w$ strongly in $H^1(\Omega)$
as $n\to\infty$. Noting that 
\begin{align}\label{l2}
\|(w_n)_+-w_+\|_{L^2(\Omega)}\le \|w_n-w\|_{L^2(\Omega)}
\to 0
\quad
\mbox{as}
\quad
n\to \infty,
\end{align}
we observe that $(w_n)_+\to w_+$ strongly in $L^2(\Omega)$.
Applying \eqref{th-KS} to $w_n$, we also have
\begin{align*}
\|(w_n)_+\|_{H^1(\Omega)}^2
=
\|(w_n)_+\|_{L^2(\Omega)}^2
+\|\nabla (w_n)_+\|_{L^2(\Omega)}^2
\le
\|w_n\|_{L^2(\Omega)}^2
+\|\nabla w_n\|_{L^2(\Omega)}^2
=
\|w_n\|_{H^1(\Omega)}^2.
\end{align*}
Since $\{w_n\}$ is bounded in $H^1(\Omega)$, so is $\{(w_n)_+\}$.
Hence, one can extract a (non-relabeled) subsequence of $\{n\}$ such
 that $(w_n)_+ \to w_+$ weakly in $H^1(\Omega)$
Again from \eqref{th-KS}, we have
\begin{align*}
\|(w_n)_+\|_{H^1(\Omega)}^2
&=
\|(w_n)_+\|_{L^2(\Omega)}^2
+(\nabla (w_n)_+,~\nabla w_n)_{L^2(\Omega)}\\
&\to
\|w_+\|_{L^2(\Omega)}^2
+(\nabla w_+,~\nabla w)_{L^2(\Omega)}
=
\|w_+\|_{H^1(\Omega)}^2,
\end{align*}
which together with the uniform convexity of $H^1(\Omega)$ also implies
 that $(w_n)_+\to w_+$ strongly in $H^1(\Omega)$.
Since $\gamma_0\in B(H^1(\Omega),H^{1/2}(\Gamma))$,
we particularly deduce that $\gamma_0w_n\to \gamma_0w$ 
and $\gamma_0(w_n)_+\to \gamma_0w_+$ strongly in $L^2(\Gamma)$.
As in \eqref{l2}, one can verify that $(\gamma_0w_n)_+\to (\gamma_0w)_+$
 strongly in $L^2(\Gamma)$.
On the other hand, since $w_n\in C(\ov{\Omega})$, it is clear that
\begin{align}\label{trace}
\gamma_0(w_n)_+=(\gamma_0w_n)_+
\quad \mbox{ $\cH^{n-1}$-a.e.~on } \Gamma.
\end{align}
Passing to the limit as $n\to\infty$ in \eqref{trace}, we conclude that
$\gamma_0w_+=(\gamma_0w)_+$ \ $\cH^{n-1}$-a.e.~on $\Gamma$.
\end{proof}

\begin{Lem}\label{lem-min}
If $v_1, v_2\in V$, then $v_1 \vee v_2 \in V$ and $v_1 \wedge v_2 \in V$.
\end{Lem}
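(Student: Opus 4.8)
The plan is to reduce everything to the positive-part operation already analyzed in Lemma~\ref{lem-trace}, via the algebraic identities
\begin{equation*}
v_1 \vee v_2 = v_1 + (v_2 - v_1)_+, \qquad v_1 \wedge v_2 = v_1 - (v_1 - v_2)_+ .
\end{equation*}
Since $V$ is a linear subspace of $H^1(\Omega)$, the differences $v_2 - v_1$ and $v_1 - v_2$ lie in $H^1(\Omega)$, so their positive parts are precisely the quantities to which Lemma~\ref{lem-trace} applies. First I would apply that lemma with $w := v_2 - v_1$ to obtain $(v_2 - v_1)_+ \in H^1(\Omega)$ together with the trace identity $\gamma_0\big((v_2-v_1)_+\big) = (\gamma_0 v_2 - \gamma_0 v_1)_+$ $\cH^{n-1}$-a.e.~on $\Gamma$; combined with $v_1 \in H^1(\Omega)$ through the first identity, this gives $v_1 \vee v_2 \in H^1(\Omega)$ at once, and it remains only to check the boundary condition defining $V$.

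Second I would verify the Dirichlet condition on $\GammaD$. Because $v_1, v_2 \in V$, we have $\gamma_0 v_1 = \gamma_0 v_2 = 0$ $\cH^{n-1}$-a.e.~on $\GammaD$, so the trace identity yields $\gamma_0\big((v_2-v_1)_+\big) = (0-0)_+ = 0$ $\cH^{n-1}$-a.e.~there. Consequently $\gamma_0(v_1 \vee v_2) = \gamma_0 v_1 + 0 = 0$ $\cH^{n-1}$-a.e.~on $\GammaD$, whence $v_1 \vee v_2 \in V$. The statement for the minimum then follows either by repeating this argument with $w := v_1 - v_2$ in the second identity, or more cheaply by writing $v_1 \wedge v_2 = -\big((-v_1)\vee(-v_2)\big)$ and invoking the linearity of $V$ together with the case just established.

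I do not anticipate a genuine obstacle: the analytic content --- the $H^1$-stability of the positive part and the commutation of the trace with $(\,\cdot\,)_+$ --- has been entirely absorbed into Lemma~\ref{lem-trace}, and what remains is elementary. The only point requiring slight care is that the trace identity holds merely $\cH^{n-1}$-almost everywhere, so the verification on $\GammaD$ is necessarily an almost-everywhere statement; but this is exactly the sense in which the boundary condition enters the definition of $V$, so no difficulty arises. When $\GammaD = \emptyset$ the condition is vacuous and the lemma reduces to the $H^1$-membership part of Lemma~\ref{lem-trace}.
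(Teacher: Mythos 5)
Your proof is correct and takes essentially the same route as the paper's: both reduce the maximum and minimum to the positive-part operation via the identity $v_1 \vee v_2 = v_1 + (v_2-v_1)_+$ (the paper uses the symmetric form $v_2 + (v_1-v_2)_+$) and then invoke Lemma~\ref{lem-trace}. You simply make explicit the verification that the trace of $(v_2-v_1)_+$ vanishes on $\GammaD$, a step the paper leaves implicit.
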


\begin{proof}
Applying Lemma~\ref{lem-trace} to $w = \pm (v_1 - v_2)$,
we find that $(v_1-v_2)_+$ and $(v_2-v_1)_+$ belong to $V$.
Hence we obtain $v_1 \vee v_2 \in V$ and $v_1 \wedge v_2 \in V$,
since $v_1 \vee v_2 = v_2+(v_1-v_2)_+$ and $v_1 \wedge v_2 =v_2-(v_2-v_1)_+$.
\end{proof}

Let us move on to a proof of Theorem \ref{th2}.

\begin{proof}[Proof of Theorem \ref{th2}]
It is obvious that (f) $\Leftrightarrow$ (g).
As in Proposition \ref{th1}, one can uniquely choose $u \in K_2$ which satisfies (f)
 and (g). Next, we shall prove the equivalence between (a)--(e) and (f),
 (g). Let $u_1$ be the unique element of $V$ satisfying (a)--(e) and let
 $u_2$ be the unique element of $V$ satisfying (f) and (g). 

We claim that $u_1=u_2$. 
Indeed, note that $Au_2\in L^p(\Omega)$ by $u_2\in K_2$.
Set $w:=u_2-\psi\in V$ and $h:=Aw\in L^p(\Omega)$.
Since $u_2$ satisfies (g), it follows that 
\begin{align}\label{du-2}
0 \leq a(u_2, v - u_2) - \langle \hat f , v - u_2 \rangle_V
&= \langle Au_2 - A\psi, v-u_2 \rangle_V \nonumber \\
&= \langle Av-Au_2,w\rangle_V
\quad
\mbox{for all }\ v\in K_2.
\end{align}
We set a measurable set
\begin{align*}
N:=\{x\in \Omega \colon ~w(x)<0\}.
\end{align*}
Define $g\in L^p(\Omega)$ by
\begin{align*}
g(x):=\left\{
\begin{array}{ll}
f(x) \vee \hat{f}(x) &\mbox{ if } \ x\in N,\\
Au_2(x)&\mbox{ if } \ x\in \Omega\setminus N.
\end{array}
\right.
\end{align*}
Then by definition one can observe that $A^{-1}g\in K_2$.
Hence substituting $v=A^{-1}g$ to \eqref{du-2}, we have
$$
0 \leq \langle g - Au_2, w \rangle_V = \int_N \left((f \vee \hat
 f)-Au_2\right)w\,\d x.
$$
Since $(f \vee \hat f) -Au_2\ge 0$ and $w<0$ a.e.~in $N$, one can derive
 the relation $Au_2= f \vee \hat f$ a.e.~in $N$. It follows that
\begin{align*}
h=Au_2-A\psi= (f \vee \hat f) - \hat f = (f-\hat{f})_+\ge 0
\quad
\mbox{ a.e.~in }N.
\end{align*}

On the other hand, we recall that $w\in V$ solves the equation $Aw=h$ in
 $V'$, that is, 
\begin{align}\label{wfw}
a(w,v)=\langle h,v\rangle_V
\quad
\mbox{for all }v\in V.
\end{align}
We define $w_-:= w \wedge 0$. Then by Lemma~\ref{lem-min} and
 Theorem~A.1.\,of \cite{K-S80}, we have
\begin{align*}
w_-\in V,
\quad
w_-= \begin{cases}
      w&\mbox{ in }N,\\
      0&\mbox{ in }\Omega\setminus N,
     \end{cases}
\quad
\nabla w_-= \begin{cases}
	     \nabla w&\mbox{a.e.~in }N,\\
	     0&\mbox{a.e.~in }\Omega\setminus N.
	    \end{cases}
\end{align*}
Substituting $v=w_-$ into \eqref{wfw}, we have
\begin{equation}\label{intN}
0 \leq a(w_-,w_-)=a(w,w_-)\stackrel{\eqref{wfw}}{=} \langle h,w_-\rangle_V
= \int_N hw\,\d x.
\end{equation}
Since $h\ge 0$ and $w<0$ a.e.~in $N$, the right-hand side of
 \eqref{intN} is non-positive. Hence we deduce that
$a(w_-,w_-)=0$, which along with the coercivity of $a(\cdot,\cdot)$ implies that
 $w_-=0$ (i.e., $w \geq 0$) a.e.~in $\Omega$. Therefore $u_2$ belongs to
 $K_0$.

Substitute $v=A^{-1}f\in K_2$ to the condition \eqref{du-2}. Then we obtain
\begin{align}\label{ineq}
\quad\langle Au_2-f,u_2-\psi\rangle_V \le 0.
\end{align}
Moreover, noting that $Au_2-f\ge 0$ (by $u_2 \in K_2$) and $u_2-\psi\ge
 0$ (by $u_2 \in K_0$), we derive
 $\langle Au_2-f,u_2-\psi\rangle_V =0$ by \eqref{ineq}.
Hence, $u=u_2$ satisfies the condition (c).
By uniqueness, we obtain $u_1=u_2$.
Thus we have proved that all the conditions (a)--(g) are
 equivalent. 

Finally, we note that (h) immediately implies (c), since $K_2 \subset
 K_1$. Conversely, let $u$ satisfy (c).
 Then $u$ belongs to $K_2$ by (f), and hence, $Au \in L^p(\Omega)$ and
 $$
 0 = \langle Au - f , u - \psi \rangle_V = \int_\Omega (Au - f)(u-\psi)
 \, \d x.
 $$ 
 Thus we obtain $(Au - f)(u - \psi) = 0$ a.e.~in $\Omega$, since $Au
 \geq f$ and $u \geq \psi$ a.e.~in $\Omega$. Therefore (h) holds.
\end{proof}

Thanks to Theorem \ref{th2}, for each solution $u$ of the variational
inequality of obstacle type,
\begin{equation}\label{vi}
u \in K_0, \quad a(u, v - u) \geq \langle f, v - u \rangle_V \quad 
\mbox{for all } \ v \in K_0,
\end{equation}
we have obtained an additional information, $u \in K_2$. In order to
more explicitly clarify the feature of the additional information,
let us introduce the following assumption: 
\begin{align}\label{c3}
\begin{cases}
 A_1^{-1}g\in W^{2,p}(\Omega)\quad \mbox{for all } \ g\in L^p(\Omega) 
 &\mbox{ in case } \ \sigma > 0 \ \mbox{ or } \ p \leq 2^*,\\
 A_1^{-1}g\in W^{2,\rho}(\Omega)\quad \mbox{for all } \ g\in L^\rho(\Omega)
 \ \mbox{ and } \ \rho \in [2^*,p]
 &\mbox{ in case } \ \sigma = 0 \ \mbox{ and } \ p > 2^*,
\end{cases}
\end{align}
where $2^* = 2n/(n-2)_+$ (here we note that
\eqref{A1condition} is a special case of \eqref{c3} with $p = 2$).
Then we find that
\begin{Prop}\label{P:K2}
Assume that \eqref{c3} holds. Then
$$
K_2 \subset W^{2,p}(\Omega).
$$
\end{Prop}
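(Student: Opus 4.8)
The plan is to reduce everything to the single claim that for an arbitrary $v \in K_2$ the functional $Av \in V'$ is in fact represented by a function $g_0 \in L^p(\Omega)$ with $f \le g_0 \le f \vee \hat f$ a.e.\ in $\Omega$; once this is available, the $W^{2,p}$-regularity will follow from the elliptic regularity hypothesis \eqref{c3}. So fix $v \in K_2$, i.e.\ $f \le Av \le f \vee \hat f$ in $V'$. Unwinding the definition of the order relation in $V'$, this means
$$
\int_\Omega f\varphi \, \d x \le \langle Av, \varphi \rangle_V \le \int_\Omega (f \vee \hat f)\varphi \, \d x
\quad \mbox{ for all } \varphi \in V \mbox{ with } \varphi \ge 0 \mbox{ a.e.\ in } \Omega,
$$
where I have used $f, f \vee \hat f \in L^p(\Omega) \hookrightarrow V'$ (recall \eqref{c2} and \eqref{c1}).

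First I would prove that $Av$ is $L^q$-continuous on $V$, where $q = p/(p-1)$. For $\varphi \ge 0$ the displayed inequalities sandwich $\langle Av, \varphi\rangle_V$ between two quantities each bounded by $C\|\varphi\|_{L^q(\Omega)}$ with $C := \max\{\|f\|_{L^p(\Omega)}, \|f \vee \hat f\|_{L^p(\Omega)}\}$, so $|\langle Av, \varphi\rangle_V| \le C\|\varphi\|_{L^q(\Omega)}$. For general $\varphi \in V$ I would split $\varphi = \varphi_+ - \varphi_-$ with $\varphi_\pm \in V$ (Lemma~\ref{lem-min}), $\varphi_\pm \ge 0$ and $\|\varphi_\pm\|_{L^q(\Omega)} \le \|\varphi\|_{L^q(\Omega)}$, to obtain $|\langle Av, \varphi\rangle_V| \le 2C\|\varphi\|_{L^q(\Omega)}$. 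Since $C_c^\infty(\Omega)$, which is contained in $V$ and dense in $L^q(\Omega)$ (here $1 < q < \infty$), the functional $Av$ extends uniquely to an element of $(L^q(\Omega))' = L^p(\Omega)$; that is, there is $g_0 \in L^p(\Omega)$ with $\langle Av, \varphi\rangle_V = \int_\Omega g_0 \varphi\,\d x$ for every $\varphi \in V$, i.e.\ $Av = g_0$ in $V'$. Testing the sandwich inequalities against nonnegative $\varphi \in C_c^\infty(\Omega)$ then yields $f \le g_0 \le f \vee \hat f$ a.e.\ in $\Omega$.

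It remains to upgrade $A_\sigma v = g_0 \in L^p(\Omega)$ to $v \in W^{2,p}(\Omega)$ via \eqref{c3}. Rewriting in terms of $A_1$, one has $A_1 v = g_0 + (1-\sigma)v$ in $V'$. If $p \le 2^{*}$ then $v \in V \hookrightarrow L^p(\Omega)$, so the right-hand side lies in $L^p(\Omega)$ and \eqref{c3} gives $v = A_1^{-1}(A_1 v) \in W^{2,p}(\Omega)$ in one step. When $p > 2^{*}$ the lower-order term $(1-\sigma)v$ is a priori only in $L^{2^{*}}(\Omega)$, so I would bootstrap: knowing $v \in L^{r}(\Omega)$ gives $A_1 v \in L^{\min\{p,r\}}(\Omega)$, whence \eqref{c3} yields $v \in W^{2,\min\{p,r\}}(\Omega)$, and the Sobolev embedding $W^{2,r}(\Omega) \hookrightarrow L^{r^{**}}(\Omega)$ raises the integrability of $v$; iterating finitely many times lands $v \in W^{2,p}(\Omega)$. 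This is precisely where the two cases of \eqref{c3} enter: for $\sigma = 0$, $p > 2^{*}$ the intermediate exponents $\rho \in [2^{*},p]$ are exactly those supplied by its second line, while for $\sigma > 0$ the regularity of $A_\sigma^{-1}$ absorbs the zeroth-order term and the conclusion follows at the top exponent directly.

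I expect the first, functional-analytic step to be the main obstacle. The delicate point is that $Av$ is, a priori, only an element of $V'$, and under mixed boundary conditions $C_c^\infty(\Omega)$ is not dense in $V$, so one cannot simply identify $Av$ with its distributional action on $\Omega$; a genuine nonnegative functional on $V$ could in principle carry a boundary part on $\GammaN$. The domination $f \le Av \le f \vee \hat f$ by $L^p$-data is what rules this out, by forcing $L^q$-continuity and hence representability in $L^p(\Omega)$. The subsequent regularity step is then routine elliptic bootstrapping built on \eqref{c3}.
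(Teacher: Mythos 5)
Your proof is correct and follows the same overall route as the paper's: reduce everything to showing $Av\in L^p(\Omega)$ for $v\in K_2$, then apply \eqref{c3} --- directly when $p\le 2^*$, and by a finite Sobolev bootstrap when $\sigma=0$ and $p>2^*$. The genuine difference is that you actually prove the first step, whereas the paper simply asserts it (``since $Av\in L^p(\Omega)$ \dots''): membership in $K_2$ a priori only sandwiches $Av$ between two $L^p$ functions in the order of $V'$, and your argument --- the two-sided bound forces $|\langle Av,\varphi\rangle_V|\le 2C\|\varphi\|_{L^q(\Omega)}$ via the splitting $\varphi=\varphi_+-\varphi_-$ in $V$ (legitimate by Lemma~\ref{lem-trace}), then density of $V$ in $L^q(\Omega)$ and $(L^q(\Omega))'=L^p(\Omega)$ produce the representative $g_0$ with $f\le g_0\le f\vee\hat f$ a.e. --- is exactly what is needed to exclude, say, a singular part of the nonnegative functional $Av-f$ carried by $\GammaN$. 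This is a worthwhile clarification of a point the paper treats as immediate. The one spot where you are no more rigorous than the paper is the sub-case $\sigma>0$, $\sigma\ne 1$, $p>2^*$: since \eqref{c3} there is phrased for $A_1$ at the single exponent $p$, writing $A_1v=g_0+(1-\sigma)v$ still requires knowing $v\in L^p(\Omega)$ before \eqref{c3} can be invoked, and neither your remark that $A_\sigma^{-1}$ ``absorbs the zeroth-order term'' nor the paper's ``one can check that $v\in L^p(\Omega)$'' supplies that step (a Moser-type integrability iteration on the weak equation $A_\sigma v=g_0$ would do it); this shared imprecision is harmless for the instances of the proposition actually used later in the paper.
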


\begin{proof}
Let $v \in K_2$.
In case $\sigma > 0$ or $p \leq 2^*$ (hence $V \hookrightarrow
 L^p(\Omega)$), since $Av \in L^p(\Omega)$ and $v \in V$, one can check
 that $v \in L^p(\Omega)$. Hence $A_1 v = A v + (1 - \sigma) v \in
 L^p(\Omega)$, which along with \eqref{c3} gives $v \in W^{2,p}(\Omega)$.
In case $\sigma = 0$ and $p > 2^*$, noting that $A_1 v \in
 L^{2^*}(\Omega)$ as above, we find by \eqref{c3} that $v \in
 W^{2,2^*}(\Omega)$. By iteration argument along with Sobolev's
 embeddings, one can finally conclude that $v \in
 L^p(\Omega)$. Therefore we deduce that $v \in W^{2,p}(\Omega)$ as in
 the former case.
\end{proof}

Condition \eqref{c3} can be regarded as \emph{elliptic regularity} of
 weak solutions for the elliptic boundary value problem,
$$
-\Delta u + u=f \ \mbox{ in } \Omega, \quad
u=0 \ \mbox{ on } \GammaD,\quad
\partial_\nu u = 0 \ \mbox{ on } \GammaN,
$$
and it holds true in many cases, e.g., smooth domains with $\GammaN =
 \emptyset$ or $\GammaD = \emptyset$ (see, e.g.,~\cite{G-T83}). However,
 the validity of \eqref{c3} is more delicate, if $\Omega$ is not smooth
 or mixed boundary conditions are imposed. So we here explicitly made the
 assumption.
 
To take account of boundary conditions, we further define a subspace of
$W^{2,p}(\Omega)$ by 
\begin{align*}
X^p&:=\{v\in W^{2,p}(\Omega) \colon \ \gamma_0(\nabla v) \cdot \nu =0 \ \
 \cH^{n-1}\mbox{-a.e.~on } \GammaN\}.
\end{align*}
Prior to stating a regularity result for \eqref{vi}, we prepare a
proposition, which provides equivalent forms of the assumption \eqref{c3}.

\begin{Prop}\label{prop-GammaD}
Under the assumption \eqref{c1}, the operator $A_\lambda|_{X^p \cap V}$
 restricted onto $X^p \cap V$ is injective and bounded linear from
 $X^p \cap V$ into $L^p(\Omega)$, and it coincides with the operator $-\Delta +
 \lambda$, where $\Delta$ means the Laplace operator from $D(\Delta) =
 X^p \cap V$ into $L^p(\Omega)$, that is, the Laplacian equipped with
 the Dirichlet and Neumann boundary conditions on $\GammaD$ and
 $\GammaN$, respectively, in a strong form.

Moreover, the following conditions for $\Omega$ and $\GammaD$, $\GammaN$ 
are equivalent to each other\/{\rm :}
\begin{enumerate}[{\rm (i)}]
\item
there exists 
$\lambda > 0$ such that 
$A_\lambda ^{-1}g\in W^{2,p}(\Omega)$  for all $g\in
     L^p(\Omega)$\/{\rm ;}
\item
for any $\lambda > 0$, it holds that $A_\lambda ^{-1}g\in W^{2,p}(\Omega)$
for all $g\in L^p(\Omega)$\/{\rm ;}
\item
there exists $\lambda > 0$ such that $A_\lambda^{-1} g \in X^p$ for all
	     $g \in L^p(\Omega)$, and $(-\Delta +\lambda) \in \Isom
	     (X^p\cap V,L^p(\Omega))$\/{\rm ;}
\item
for any $\lambda > 0$, it holds that $A_\lambda^{-1} g \in X^p$ for all
	     $g \in L^p(\Omega)$, and
$(-\Delta +\lambda) \in \Isom (X^p\cap V,L^p(\Omega))$.
\end{enumerate}
\end{Prop}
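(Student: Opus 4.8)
The plan is to treat the proposition in two stages: first identify $A_\lambda|_{X^p\cap V}$ with the strong operator $-\Delta+\lambda$, and then run a short cycle of implications among (i)--(iv) built on that identification.

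For the first assertion I would fix $u\in X^p\cap V$ and an arbitrary $v\in V$ and integrate by parts in the bilinear form \eqref{a1}. Since $u\in W^{2,p}(\Omega)\hookrightarrow H^1(\Omega)$ by \eqref{c1} and $\nabla u\in W^{1,p}(\Omega;\R^n)$ admits a well-defined normal trace on the Lipschitz boundary $\Gamma$, the Gauss--Green formula gives $\langle A_\lambda u,v\rangle_V=\int_\Omega(-\Delta u+\lambda u)v\,\d x+\int_\Gamma(\partial_\nu u)\,v\,\d\cH^{n-1}$. The boundary integral splits over $\GammaD$ and $\GammaN$ and vanishes: the trace of $v$ is zero $\cH^{n-1}$-a.e.~on $\GammaD$ since $v\in V$, while $\partial_\nu u=0$ $\cH^{n-1}$-a.e.~on $\GammaN$ since $u\in X^p$. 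Hence $A_\lambda u=-\Delta u+\lambda u$ in $L^p(\Omega)$; this map is bounded from $X^p\cap V$ into $L^p(\Omega)$, and injectivity is immediate from the coercivity of $a(\cdot,\cdot)$ (testing $A_\lambda u=0$ with $u$ forces $u=0$ for $\lambda>0$).

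For the equivalences I would run the cycle (ii) $\Rightarrow$ (i) $\Rightarrow$ (iii) $\Rightarrow$ (iv) $\Rightarrow$ (ii), in which (ii) $\Rightarrow$ (i) is the specialization of ``for all $\lambda$'' to a single $\lambda$, and (iv) $\Rightarrow$ (ii) is immediate from $X^p\subset W^{2,p}(\Omega)$. The substance lies in (i) $\Rightarrow$ (iii), which has two parts. First, if $u:=A_\lambda^{-1}g\in W^{2,p}(\Omega)$ for some $\lambda>0$ and all $g\in L^p(\Omega)$, then $u$ automatically lies in $X^p$: integrating the weak identity by parts as above and using $-\Delta u+\lambda u=g$ in $\Omega$ (obtained by testing with $C_c^\infty(\Omega)$) leaves $\int_{\GammaN}(\partial_\nu u)\,v\,\d\cH^{n-1}=0$ for all $v\in V$, whence $\partial_\nu u=0$ on $\GammaN$ by density of the admissible boundary traces. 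Second, by the identification just proved, $-\Delta+\lambda\colon X^p\cap V\to L^p(\Omega)$ is a bounded bijection (surjectivity is exactly $A_\lambda^{-1}(L^p)\subset X^p\cap V$, injectivity is coercivity), so the open mapping theorem upgrades it to an element of $\Isom(X^p\cap V,L^p(\Omega))$, which is (iii). The reverse inclusion (iii) $\Rightarrow$ (i) is trivial, again from $X^p\subset W^{2,p}(\Omega)$.

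The most interesting step is (iii) $\Rightarrow$ (iv), i.e.~transferring the isomorphism from one $\lambda_0>0$ to every $\mu>0$. Rather than bootstrapping elliptic estimates across Sobolev exponents, I would argue by compact perturbation: writing $-\Delta+\mu=(-\Delta+\lambda_0)+(\mu-\lambda_0)\iota$ with $\iota\colon X^p\cap V\hookrightarrow L^p(\Omega)$ the inclusion, the term $(\mu-\lambda_0)\iota$ is compact by the Rellich--Kondrachov theorem ($W^{2,p}(\Omega)\hookrightarrow\hookrightarrow L^p(\Omega)$ on the bounded Lipschitz domain $\Omega$). Thus $-\Delta+\mu$ is a compact perturbation of the isomorphism supplied by (iii), hence Fredholm of index $0$ from $X^p\cap V$ to $L^p(\Omega)$; being injective (coercivity, $\mu>0$), it is an isomorphism. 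Its inverse coincides with the weak solution operator $A_\mu^{-1}$, since every strong solution in $X^p\cap V$ is a weak solution in $V$ and weak solutions are unique; hence $A_\mu^{-1}(L^p)=X^p\cap V\subset W^{2,p}(\Omega)$ for every $\mu>0$, giving (iv). I expect the main obstacle to be the rigorous handling of the boundary terms on the merely Lipschitz domain with a mixed boundary: one must justify the Gauss--Green formula for $u\in W^{2,p}(\Omega)$ paired against $v\in V\subset H^1(\Omega)$ under the exponent constraint \eqref{c1} (which is exactly what makes $\int_\Omega\Delta u\,v$ and the boundary trace integral meaningful), and one must argue that vanishing of $\int_{\GammaN}(\partial_\nu u)v\,\d\cH^{n-1}$ against all $v\in V$ forces $\partial_\nu u=0$ $\cH^{n-1}$-a.e.~on the relatively open set $\GammaN$. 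Everything else (open mapping, Rellich--Kondrachov, the Fredholm alternative, uniqueness of weak solutions) is standard once the identity $A_\lambda|_{X^p\cap V}=-\Delta+\lambda$ is in hand.
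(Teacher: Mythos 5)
Your proposal is correct and follows essentially the same route as the paper: Green's formula on the Lipschitz domain to identify $A_\lambda|_{X^p\cap V}$ with $-\Delta+\lambda$, the cycle (ii) $\Rightarrow$ (i) $\Rightarrow$ (iii) $\Rightarrow$ (iv) $\Rightarrow$ (ii) with the open mapping theorem for (i) $\Rightarrow$ (iii), and the compact-perturbation/index-zero Fredholm argument for (iii) $\Rightarrow$ (iv). The only cosmetic difference is that the paper tests against the dense subspace $V\cap W^{1,q}(\Omega)$ when applying Green's formula, which is precisely the technical point you flagged as needing care.
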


\begin{proof}
Denote $B_\lambda :=A_\lambda |_{X^p\cap V}$ for
$\lambda >0$. Then, for $u\in X^p\cap
V$, we observe by Green's formula, which is valid for Lipschitz domains,
 that
\begin{align*}
\langle B_\lambda u,v\rangle_V =
\int_\Omega \left( \nabla u\cdot\nabla v + \lambda uv \right)\,\d x
= \int_\Omega \left( -\Delta u + \lambda u \right) v \, \d x
\quad \mbox{ for all } \ v\in V \cap W^{1,q}(\Omega),
\end{align*}
which implies that $B_\lambda u =-\Delta u +\lambda u$ and
 $B_\lambda \in B(X^p\cap V,L^p(\Omega))$, since $V \cap
 W^{1,q}(\Omega)$ is dense in $L^q(\Omega)$. Thus we obtain $B_\lambda =
 (-\Delta + \lambda)$. Moreover, $B_\lambda$ is injective, since so is
 $A_\lambda$.

As for the equivalence of (i)--(iv), we shall show (ii) $\Rightarrow$
 (i) $\Rightarrow$ (iii) $\Rightarrow$ (iv) $\Rightarrow$ (ii).
It is clear that (ii) $\Rightarrow$ (i) (and also (iv) $\Rightarrow$ (iii)).
We show (i) $\Rightarrow$ (iii). Assume (i), let $g\in L^p(\Omega)$ and
set $u:=A_\lambda^{-1}g\in W^{2,p}(\Omega)\cap V$. 
For all $v\in V \cap W^{1,q}(\Omega)$, we have
\begin{align*}
\int_\Omega gv \,\d x
=\langle A_\lambda u,v\rangle_V =
\int_\Omega \left( \nabla u\cdot\nabla v + \lambda uv \right)\,\d x
= \int_{\GammaN} (\partial_\nu u) v \,\d\cH^{n-1} +
\int_\Omega \left(-\Delta u+\lambda u\right)v \,\d x,
\end{align*}
which implies that $u\in X^p$ and $-\Delta u(x)+\lambda u(x)=g(x)$ for a.e.~$x \in \Omega$. Hence $u = B_\lambda^{-1} g$. Therefore,
$B_\lambda$ is surjective from $X^p \cap V$ into
 $L^p(\Omega)$. By the open mapping theorem, we obtain 
$(-\Delta +\lambda ) = B_\lambda \in \Isom (X^p\cap V,L^p(\Omega))$.

We next show (iii) $\Rightarrow$ (iv).
Under the condition (iii), it holds that 
$B_\lambda \in \Isom (X^p\cap V,L^p(\Omega))$,
in particular, $B_\lambda : X^p\cap V \to L^p(\Omega)$ is a
 Fredholm operator of index zero.
For arbitrary $\mu >0$, we find that $B_\mu =B_\lambda +(\mu
 -\lambda)$ is a Fredholm operator of index zero from $X^p\cap V$
 to $L^p(\Omega)$ as well, since $X^p \cap V$ is compactly embedded in
 $L^p(\Omega)$. Since $B_\mu$ is injective (i.e., $\dim \ker(B_\mu) =
 0$), we infer that $B_\mu$ is surjective, and hence $B_\mu$ also
 belongs to $\Isom (X^p\cap V,L^p(\Omega))$. Furthermore, for any $g \in
 L^p(\Omega)$ and $\mu > 0$, the element $u = (-\Delta + \mu)^{-1}g =
 B_\mu^{-1} g$ belongs to $X^p \cap V$. Hence $B_\mu u = g$,
 i.e., $A_\mu u = g$, which implies $A_\mu^{-1} g = u \in X^p$.
Thus (iv) follows.

It is obvious that (iv) implies (ii) by the definition of $X^p$.
Thus we have shown that all the conditions (i)--(iv) are equivalent to
 each other.
\end{proof}

\begin{Rem}
{\rm
Under \eqref{c3}, the assumptions for $\psi$ in \eqref{c2} is equivalent
 to $\psi \in X^p \cap V$. Indeed, let $\psi \in V$ satisfy $A\psi
 \in L^p(\Omega)$. Then as in the proof of Proposition \ref{P:K2}, one
 can check that $\psi \in W^{2,p}(\Omega)$. Moreover, by Green's
 formula, we find that
\begin{align*}
\int_\Omega A \psi v \, \d x
=
\langle A\psi, v \rangle_V 
&= \int_\Omega \nabla \psi \cdot \nabla v \, \d x + \sigma \int_\Omega
 \psi v \, \d x\\
&= \int_\Omega \left( - \Delta \psi + \sigma \psi \right) v \, \d x 
+ \int_{\GammaN} (\partial_\nu \psi) v \, \d \cH^{n-1}
\end{align*}
for all $v \in V \cap W^{1,q}(\Omega)$. Thus by the
 arbitrariness of $v$, we obtain $\partial_\nu \psi = 0$ \ $\cH^{n-1}$-a.e.~on $\GammaN$,
 whence follows $\psi \in X^p$.
}
\end{Rem}

Now, we are in position to state a regularity result for \eqref{vi} as a
corollary of Theorem \ref{th2}. This corollary will be used for proving
Theorem \ref{T:ex} in Section \ref{sec-td}.

\begin{Cor}[Regularity of solutions for variational inequalities of
 obstacle type]\label{cor1}
Assume that \eqref{A1}, \eqref{c1}, $f \in L^p(\Omega)$, $\psi \in X^p
 \cap V$ and \eqref{c3} are satisfied.
Let $u \in V$ be the unique element satisfying {\rm (a)--(h)}. Then it
 holds that
\begin{equation}\label{3}
u\in X^p \cap K_0, 
\quad
f \leq Au \leq f \vee \hat f \ \mbox{ a.e.~in } \Omega.
\end{equation}
\end{Cor}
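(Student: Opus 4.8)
The plan is to assemble the corollary directly from the three ingredients already prepared in this section: the equivalence in Theorem~\ref{th2} (in particular condition (h)), the regularity inclusion of Proposition~\ref{P:K2}, and the Green's-formula computation used in the Remark following Proposition~\ref{prop-GammaD}. Since $u$ is the unique element satisfying (a)--(h), essentially no new machinery is needed; the task is to read off \eqref{3} from these facts.

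First I would invoke Theorem~\ref{th2}. Because $u$ satisfies (a)--(e), it also satisfies (h), so $u \in K_0 \cap K_2$. The membership $u \in K_0$ gives $u \geq \psi$ a.e.\ in $\Omega$, which is the $K_0$ part of \eqref{3}. From $u \in K_2$ we have $Au \in L^p(\Omega)$ together with the two-sided bound $f \leq Au \leq f \vee \hat f$ in $V'$; since $f$, $Au$ and $f \vee \hat f$ all lie in $L^p(\Omega)$ (recall $\hat f = A\psi \in L^p(\Omega)$ by \eqref{c2}), this bound in fact holds a.e.\ in $\Omega$, which is the second assertion of \eqref{3}. Next I would apply Proposition~\ref{P:K2}: under \eqref{c3} we have $K_2 \subset W^{2,p}(\Omega)$, so $u \in W^{2,p}(\Omega)$.

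It then remains to verify the Neumann boundary condition $\gamma_0(\nabla u)\cdot \nu = 0$ $\cH^{n-1}$-a.e.\ on $\GammaN$, which upgrades $u \in W^{2,p}(\Omega) \cap V$ to $u \in X^p$. For this I would repeat verbatim the Green's-formula argument of the Remark after Proposition~\ref{prop-GammaD}: since $u \in W^{2,p}(\Omega) \cap V$ and $Au \in L^p(\Omega)$, testing against $v \in V \cap W^{1,q}(\Omega)$ yields
$$
\int_\Omega (Au)\, v \, \d x = \int_\Omega \left( -\Delta u + \sigma u \right) v \, \d x + \int_{\GammaN} (\partial_\nu u)\, v \, \d \cH^{n-1}.
$$
As $Au = -\Delta u + \sigma u$ a.e.\ in $\Omega$ (because $u \in W^{2,p}(\Omega)$), the boundary integral vanishes for every such $v$, and by density of $V \cap W^{1,q}(\Omega)$ this forces $\partial_\nu u = 0$ $\cH^{n-1}$-a.e.\ on $\GammaN$. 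Hence $u \in X^p$, and combined with $u \in K_0$ this completes \eqref{3}.

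I do not expect a serious obstacle, since the statement is essentially a repackaging of results already established. The only slightly delicate point is the passage from the order relation $f \leq Au \leq f \vee \hat f$ \emph{in $V'$} to the \emph{a.e.} inequality in $\Omega$, i.e.\ the fact that an element of $V'$ squeezed between two $L^p(\Omega)$ functions is itself represented by an $L^p(\Omega)$ function; this is exactly the identification $Au \in L^p(\Omega)$ already exploited in the proof of Theorem~\ref{th2}. The verification of the Neumann condition is the most computational step, but it is a direct transcription of the Remark's argument with $\psi$ replaced by $u$.
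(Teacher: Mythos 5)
Your proposal is correct and follows essentially the same route as the paper: Theorem~\ref{th2} gives $u\in K_0\cap K_2$ (hence the a.e.\ bounds on $Au$), Proposition~\ref{P:K2} gives $u\in W^{2,p}(\Omega)$, and then the Neumann condition is checked. The only cosmetic difference is in the last step, where the paper writes $u=A_1^{-1}\bigl(Au+(1-\sigma)u\bigr)$ and cites \eqref{c3} together with Proposition~\ref{prop-GammaD} to conclude $u\in X^p$, whereas you re-derive the same fact by repeating the Green's-formula computation; these are the same argument.
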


\begin{proof}
By Theorem \ref{th2}, the unique element $u \in V$ satisfying (a)--(h)
 belongs to $K_2$. Hence, by Proposition \ref{P:K2}, one has $u
 \in W^{2,p}(\Omega)$. Since one can observe that $u = A_1^{-1}(Au + (1 -
 \sigma) u)$ and $Au + (1 - \sigma) u \in L^p(\Omega)$, by \eqref{c3}
 along with Proposition \ref{prop-GammaD}, it holds that $u \in X^p$. 
\end{proof} 

We next give a comparison theorem for variational inequalities of
obstacle type.

\begin{Th}[Comparison principle for variational inequalities of obstacle
 type]\label{T:comp-ell}
We suppose that \eqref{A1} and \eqref{c1} are satisfied. 
For $i=1,2$, let $f_i\in L^p(\Omega)$ and $\psi_i\in V$ be such that
 $A\psi_i \in L^p(\Omega)$ and set $K_0^i:=\{ v\in V \colon v\ge
 \psi_i~\mbox{a.e.~in~}\Omega \}$.
Let $u_i\in V$ be the unique solution of the variational inequality\/{\rm :}
\begin{align}\label{ivi}
u_i\in K_0^i,
\quad
a(u_i,v-u_i)\ge \langle f_i,v-u_i\rangle_V \ \mbox{ for all } \ v\in
K_0^i
\end{align}
for $i = 1,2$.
If $f_1\le f_2$ and $\psi_1\le \psi_2$ a.e.~in $\Omega$,
then $u_1\le u_2$ a.e.~in $\Omega$.
\end{Th}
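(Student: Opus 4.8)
The plan is to prove $u_1 \le u_2$ a.e.\ by showing that $w := (u_1 - u_2)_+$ vanishes, testing each variational inequality against a suitable truncation and then invoking the coercivity of $a(\cdot,\cdot)$. First I would record that $w \in V$: writing $w = u_1 \vee u_2 - u_2$, this follows from Lemma~\ref{lem-min}. The crucial preliminary step is to check that the natural competitors land in the correct constraint sets. Since $\psi_1 \le \psi_2$ a.e., we have $u_1 \wedge u_2 \ge \psi_1$ (because $u_1 \ge \psi_1$ and $u_2 \ge \psi_2 \ge \psi_1$), so $u_1 \wedge u_2 \in K_0^1$; likewise $u_1 \vee u_2 \ge u_2 \ge \psi_2$, so $u_1 \vee u_2 \in K_0^2$. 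This membership check is precisely where the ordering hypothesis $\psi_1 \le \psi_2$ is consumed.

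Next I would substitute $v = u_1 \wedge u_2$ into \eqref{ivi} for $i = 1$ and $v = u_1 \vee u_2$ into \eqref{ivi} for $i = 2$. Using the identities $u_1 \wedge u_2 - u_1 = -w$ and $u_1 \vee u_2 - u_2 = w$, the two inequalities become
$$
a(u_1, w) \le \langle f_1, w \rangle_V, \qquad a(u_2, w) \ge \langle f_2, w \rangle_V.
$$
Subtracting the first from the second and using the bilinearity of $a(\cdot,\cdot)$ yields $a(u_1 - u_2, w) \le \langle f_1 - f_2, w \rangle_V$.

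The third step is to evaluate both sides. For the right-hand side, interpreting the pairing as $\langle f_1 - f_2, w \rangle_V = \int_\Omega (f_1 - f_2)\, w \,\d x$ (via $L^p(\Omega) \hookrightarrow V'$) and using $f_1 - f_2 \le 0$ together with $w \ge 0$, we get $\langle f_1 - f_2, w \rangle_V \le 0$. For the left-hand side, I would apply the truncation identities of Theorem~A.1 of \cite{K-S80} (already used in Lemma~\ref{lem-trace}): on $\{u_1 > u_2\}$ one has $w = u_1 - u_2$ and $\nabla w = \nabla(u_1 - u_2)$, while $w = 0$ and $\nabla w = 0$ elsewhere. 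Consequently $a(u_1 - u_2, w) = a(w, w)$. Combining the two evaluations gives $a(w, w) \le 0$, and the coercivity of $a(\cdot,\cdot)$ (which holds under \eqref{A1}) forces $w = 0$, i.e.\ $u_1 \le u_2$ a.e.\ in $\Omega$.

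I do not expect a serious obstacle here; the argument is essentially routine once the competitors are correctly chosen. The only genuinely load-bearing points are the constraint-set verification in the first step (the sole use of $\psi_1 \le \psi_2$) and the gradient-truncation identity that converts $a(u_1 - u_2, w)$ into the coercive quantity $a(w, w)$. The main care needed is bookkeeping: keeping the directions of the two inequalities straight and reading the duality pairing as the $L^p$–$L^q$ integral so that the sign of the right-hand side is transparent.
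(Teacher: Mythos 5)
Your argument is correct, and it is a genuinely different (and more self-contained) route than the one the paper takes. The paper deduces the theorem in two lines from Lemma~\ref{lem-cp-vi}: it observes that $u_2\in K_0^1\cap K_1^1$ with $Au_2\in L^p(\Omega)$ (the latter coming from the regularity statement $u_2\in K_2$ of Theorem~\ref{th2}) and then invokes that lemma with $u=u_1$, $w=u_2$. That route leans on the machinery of Section~\ref{sec-vi} --- in particular on the fact that the solution is a pointwise a.e.\ supersolution with $Au\in L^p(\Omega)$ --- which the authors need anyway for the asymptotic analysis. Your proof is the classical lattice/truncation argument: test the $i=1$ inequality with $u_1\wedge u_2$, the $i=2$ inequality with $u_1\vee u_2$, add, and use the identity $a(u_1-u_2,(u_1-u_2)_+)=a\bigl((u_1-u_2)_+,(u_1-u_2)_+\bigr)$ from Theorem~A.1 of \cite{K-S80} together with coercivity. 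All the steps check out: the constraint-set memberships are exactly where $\psi_1\le\psi_2$ enters, the sign of $\langle f_1-f_2,w\rangle_V$ is handled correctly through the $L^p$--$L^q$ pairing, and coercivity under \eqref{A1} closes the argument. What your approach buys is economy of hypotheses: it never uses $A\psi_i\in L^p(\Omega)$ or the regularity theory at all, only $\psi_i\in V$, the ordering of the data, and coercivity, so it would prove the comparison principle in a purely variational ($V$/$V'$) setting. What the paper's approach buys is reuse: Lemma~\ref{lem-cp-vi} is the statement actually needed again in the proof of Theorem~\ref{T:asp} (to compare the $\omega$-limit with the solution of ${\rm (VI)}(u_0,f_\infty)$), so deriving Theorem~\ref{T:comp-ell} as a corollary of it keeps the paper shorter.
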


This theorem will be used to prove Theorem \ref{T:comp}, a comparison
theorem for the evolutionary problem \eqref{is1}, with the aid of the
discretization argument. 

To prove Theorem \ref{T:comp-ell}, we prepare the following lemma.

\begin{Lem}\label{lem-cp-vi}
We suppose that \eqref{A1}, \eqref{c1} and \eqref{c2} are satisfied. 
Let $u \in V$ be the unique solution of {\rm (a)--(h)}.
Then it holds that $u\le w$ a.e.~in $\Omega$ for all $w\in K_0\cap K_1$
 satisfying $Aw \in L^p(\Omega)$.
\end{Lem}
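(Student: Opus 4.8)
The plan is to prove $u\le w$ by showing that the nonnegative function $z:=(u-w)_+$ vanishes identically, combining the complementarity condition (h) of Theorem~\ref{th2} with the coercivity of $a(\cdot,\cdot)$. By Theorem~\ref{th2} the solution $u$ of (a)--(h) belongs to $K_2$, so $Au\in L^p(\Omega)$, while $Aw\in L^p(\Omega)$ is assumed; hence $A(u-w)=Au-Aw\in L^p(\Omega)$. Moreover $u\in K_0$ and $w\in K_0$ give $u\ge\psi$ and $w\ge\psi$ a.e.~in $\Omega$, and since $Aw-f\in L^p(\Omega)$, the relation $Aw\ge f$ in $V'$ (from $w\in K_1$) is equivalent to the pointwise bound $Aw\ge f$ a.e.~in $\Omega$.

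First I would introduce the set $P:=\{x\in\Omega\colon u(x)>w(x)\}$ and note, via Lemma~\ref{lem-min} and Theorem~A.1 of \cite{K-S80}, that $z\in V$ with $z=u-w$, $\nabla z=\nabla(u-w)$ a.e.~on $P$ and $z=0$, $\nabla z=0$ a.e.~on $\Omega\setminus P$. On $P$ one has $u>w\ge\psi$, so $u-\psi>0$; the pointwise complementarity $(Au-f)(u-\psi)=0$ a.e.~in $\Omega$ furnished by (h) then forces $Au=f$ a.e.~on $P$. Combined with $Aw\ge f$ a.e., this yields the crucial sign information $A(u-w)=f-Aw\le 0$ a.e.~on $P$.

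Next I would test the bilinear form against $z$. On the one hand, since $A(u-w)\in L^p(\Omega)$ and $z\in V\hookrightarrow L^q(\Omega)$, the duality pairing reduces to an integral,
\[
a(u-w,z)=\langle A(u-w),z\rangle_V=\int_\Omega (Au-Aw)\,z\,\d x=\int_P (Au-Aw)\,z\,\d x\le 0,
\]
because $Au-Aw\le 0$ and $z\ge 0$ on $P$, while $z=0$ off $P$. On the other hand, the structure of $z$ and $\nabla z$ shows $a(u-w,z)=a(z,z)$, since the integrand of $a(u-w,z)$ coincides with that of $a(z,z)$ on $P$ and both vanish on $\Omega\setminus P$. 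Hence $a(z,z)\le 0$.

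Finally, the coercivity of $a(\cdot,\cdot)$ guaranteed by \eqref{A1} gives $a(z,z)\ge c\|z\|_V^2$ for some $c>0$, so $a(z,z)\le 0$ forces $z=0$, i.e.~$u\le w$ a.e.~in $\Omega$. The heart of the argument is the step that converts the complementarity (h) into the identity $Au=f$ a.e.~on the set where $u$ strictly exceeds $w$; the remaining subtlety is the (routine) passage from the $V'$-inequality $Aw\ge f$ to a pointwise a.e.~inequality, which is legitimate precisely because $Aw-f$ is an $L^p(\Omega)$ function.
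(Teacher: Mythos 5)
Your proof is correct and follows essentially the same route as the paper's: both truncate via $z=(u-w)_+$, show $Au=f$ a.e.\ on the set where $u>w$, and conclude from $a(z,z)\le 0$ and the coercivity of $a(\cdot,\cdot)$. The only (harmless) deviation is that you extract $Au=f$ on that set directly from the pointwise complementarity (h) via $u>w\ge\psi$, whereas the paper obtains it from the variational inequality (b) by testing with $u\wedge w$; both variants rest on the fact $u\in K_2$ (hence $Au\in L^p(\Omega)$) supplied by Theorem~\ref{th2}.
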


\begin{proof}
We set $N:=\{x\in \Omega \colon w(x)<u(x)\}$ and $v:= u \wedge w$.
Since $w \in K_0$, by Lemma~\ref{lem-min} and \eqref{th-KS}, $v$ satisfies 
\begin{align*}
v\in K_0,
\quad
v= \begin{cases}
    u&\mbox{a.e.~in }\Omega\setminus N,\\
    w&\mbox{a.e.~in }N,
   \end{cases}
\quad
\nabla v=\begin{cases}
	  \nabla u&\mbox{a.e.~in }\Omega\setminus N,\\
\nabla w&\mbox{a.e.~in }N.
	 \end{cases}
\end{align*}
Substituting $v$ into the variational inequality (b) of Proposition~\ref{th1},
we have 
\begin{align*}
0\le \langle Au-f,v-u\rangle_V 
=\int_N (Au-f)(w-u)\,\d x.
\end{align*}
Since $Au - f \geq 0$ and $w-u<0$ a.e.~in $N$, it follows that
$Au=f$ a.e.~in $N$. Here we note that
\begin{align*}
v-u\in V,
\quad
v-u=\begin{cases}
     0&\mbox{a.e.~in }\Omega\setminus N,\\
     w- u <0&\mbox{a.e.~in }N,
    \end{cases}
\quad
\nabla (v-u)=\begin{cases}
	      0&\mbox{a.e.~in }\Omega\setminus N,\\
	      \nabla (w-u)&\mbox{a.e.~in }N.
	     \end{cases}
\end{align*}
From the fact that $u-v=(u-w)_+$ and $Aw \in L^p(\Omega)$, we obtain
\begin{align*}
0 \leq a(u-v,u-v)
&=a(u-w, u-v)
=
\langle A(u-w),u-v\rangle_V \\
&=
\int_N (Au-Aw)(u-w)\,\d x
=
\int_N (f-Aw)(u-w)\,\d x.
\end{align*}
Since $Aw \geq f$ (by $w \in K_1$) and $u > w$ a.e.~in $N$, we conclude
 that $a(u-v,u-v) = 0$, whence $u=v$ (hence $u \le w$) a.e.~in
 $\Omega$.
\end{proof}

The lemma above will also play a crucial role for identifying the
limit of each solution $u = u(x,t)$ for \eqref{is1}--\eqref{ic} as $t
\to \infty$ in a proof of Theorem \ref{T:asp}.

We are now ready to prove Theorem \ref{T:comp-ell}.

\begin{proof}[Proof of Theorem \ref{T:comp-ell}]
 By assumption, we find that $K^2_0 \subset K^1_0$ and $K^2_1 \subset
 K^1_1$. Moreover, $u_2$ belongs to both $K^1_0$ and $K^1_1 := \{v \in V
 \colon Av \geq f_1 \ \mbox{ in } V'\}$, and $A u_2 \in
 L^p(\Omega)$ as well. By Lemma \ref{lem-cp-vi}, we conclude that $u_1 \leq u_2$
 a.e.~in $\Omega$.
\end{proof} 

\section{Reduction to an evolution equation and the uniqueness of
 solution}\label{sec-uni}

In this section, we first reduce the problem \eqref{is1}--\eqref{ic} to
the Cauchy problem for a nonlinear evolution equation in $L^2(\Omega)$
with the aid of convex analysis. Then we shall prove
Theorem~\ref{th-uni} on the uniqueness of solution.

 Let us begin with reformulating \eqref{eqn} as a parabolic
 \emph{inclusion} with a multivalued nonlinear operator acting on the
 time derivative of $u(x,t)$. Let $\alpha : \mathbb R \to 2^{\mathbb R}$ be
 given by 
\begin{equation}\label{alpha}
 \alpha(s) = \begin{cases}
	      \{0\} \quad &\mbox{ if } s > 0,\\
	      (-\infty,0] \quad &\mbox{ if } s = 0
	     \end{cases}
 \end{equation}
 with the domain $D(\alpha) = [0,\infty)$. 
 Then $s + \alpha(s)$ is the (multi-valued) inverse mapping of the
 function $(s)_+$, and it can be also represented by
 $$
 \alpha(s) = \partial \I (s) \quad \mbox{ for } \ s \geq 0,
 $$
 where $\I$ denotes the indicator function over
 the set $[s \geq 0] := \{ s \in \mathbb R \colon s \geq 0\}$
 and $\partial$ means the \emph{subdifferential} in the sense of
 convex analysis (see, e.g.,~\cite{Bre73} and also \eqref{subdif} below with
 $H = \R$). Then \eqref{is1} can be reformulated as a
 \emph{doubly nonlinear}-type PDE,
 \begin{equation}\label{dnp}
  \partial_t u + \alpha(\partial_t u) \ni \Delta u + f \ \mbox{ in } Q.
 \end{equation}

We next reduce the PDE \eqref{dnp} to an \emph{evolution equation}. To
this end, define a functional $\phi : L^2(\Omega) \to [0,\infty]$ by
$$
\phi(v):=
\begin{cases}
\frac 1 2 \int_\Omega |\nabla v|^2\,\d x
&\mbox{if } v\in V,\\
+\infty 
& \mbox{if } v\in L^2(\Omega)\setminus V
\end{cases}
$$
with the \emph{effective domain} $D(\phi) := \{v \in L^2(\Omega) \colon
\phi(v) < +\infty\} = V$.
Then  we observe that:
\begin{Lem}\label{lem-mm}
The functional $\phi$ is convex and lower semicontinuous
in $L^2(\Omega)$. In particular, if \eqref{A1condition} is satisfied,
 then the subdifferential operator $\partial \phi$ of $\phi$ {\rm (}in
 $L^2(\Omega)${\rm )} is characterized as
\begin{align*}
D(\partial \phi)= X \cap V,\quad
\partial \phi(v)= -\Delta v\quad \mbox{ for } \ v\in X\cap V,
\end{align*}
where $\Delta$ stands for the Laplace operator from $D(\Delta) = X
 \cap V$ into $L^2(\Omega)$ as in Proposition \ref{prop-GammaD}.
\end{Lem}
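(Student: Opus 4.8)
The plan is to handle the unconditional part (convexity and lower semicontinuity) first, and then the characterization of $\partial\phi$, where the hypothesis \eqref{A1condition} is the essential ingredient. Convexity is immediate: $V$ is a linear subspace, hence convex, and $v \mapsto \frac12\int_\Omega|\nabla v|^2\,\d x$ is the composition of the linear map $v \mapsto \nabla v$ with a convex quadratic integrand, so extending it by $+\infty$ off $V$ keeps $\phi$ convex. For lower semicontinuity I would take $v_n \to v$ strongly in $L^2(\Omega)$ and assume, without loss of generality, that $\liminf_n \phi(v_n) < +\infty$ is attained along the whole sequence; then $\{v_n\}$ is bounded in $H^1(\Omega)$, so a subsequence converges weakly in $H^1(\Omega)$, and its weak limit must coincide with $v$ because of the strong $L^2$-convergence. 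As $V$ is a closed, hence weakly closed, subspace of $H^1(\Omega)$, we get $v \in V$, and the weak lower semicontinuity of the Dirichlet seminorm gives $\phi(v) \le \liminf_n \phi(v_n)$.

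Next I would establish the inclusion $-\Delta v \in \partial\phi(v)$ for every $v \in X \cap V$. From the pointwise convexity estimate $\tfrac12|\nabla w|^2 - \tfrac12|\nabla v|^2 \ge \nabla v \cdot \nabla(w-v)$ one obtains, for all $w \in V$,
\[
\phi(w) - \phi(v) \ge \int_\Omega \nabla v \cdot \nabla(w-v)\,\d x,
\]
and Green's formula (valid on Lipschitz domains, as already used for Proposition \ref{prop-GammaD}) turns the right-hand side into $\int_\Omega(-\Delta v)(w-v)\,\d x$ plus a boundary integral over $\Gamma$. That boundary integral vanishes, since $w - v \in V$ has vanishing trace on $\GammaD$ while $\partial_\nu v = 0$ on $\GammaN$ by $v \in X$; and for $w \in L^2(\Omega)\setminus V$ the subgradient inequality is trivial because $\phi(w) = +\infty$. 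Hence $\phi(w)-\phi(v) \ge (-\Delta v, w - v)_{L^2(\Omega)}$ for all $w \in L^2(\Omega)$, i.e.\ $-\Delta v \in \partial\phi(v)$.

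For the converse, let $\xi \in \partial\phi(v)$, so that $v \in V$. Testing the subgradient inequality with $w = v + t\varphi$ for $\varphi \in V$ and $t \in \R$, dividing by $t$ and letting $t \to 0^+$ and $t \to 0^-$ gives the Euler--Lagrange identity
\[
\int_\Omega \nabla v \cdot \nabla \varphi\,\d x = (\xi,\varphi)_{L^2(\Omega)} \quad \mbox{ for all } \ \varphi \in V,
\]
equivalently $A_1 v = \xi + v$ in $V'$ with right-hand side $\xi + v \in L^2(\Omega)$ (here $A_1$ is as in \eqref{def:A-lam}). This is exactly where \eqref{A1condition} is used: it yields $v = A_1^{-1}(\xi+v) \in H^2(\Omega)$, and Proposition \ref{prop-GammaD} with $p = 2$ (so that $X^p = X$) promotes this to $v \in X \cap V$ together with $(-\Delta + 1)v = \xi + v$, that is, $-\Delta v = \xi$. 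This simultaneously shows $D(\partial\phi) \subset X \cap V$, that $\partial\phi$ is single-valued, and that $\partial\phi(v) = -\Delta v$. The main obstacle is precisely this last step: the subgradient condition only delivers a \emph{weak} $H^1$-solution of a mixed elliptic problem, and upgrading it to a strong $H^2$-solution while recovering the homogeneous Neumann condition on $\GammaN$ is the whole force of the elliptic regularity hypothesis, conveniently packaged by Proposition \ref{prop-GammaD}; the remaining arguments are routine convex-analytic bookkeeping.
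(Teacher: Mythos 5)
Your proposal is correct and follows essentially the same route as the paper: both identify the subgradient $\xi$ with the weak form of $-\Delta v$ on $V$ (the paper via Fr\'echet differentiability of $\phi|_V$, you via the directional-derivative computation, which is the same thing), rewrite this as $A_1 v = \xi + v \in L^2(\Omega)$, and invoke \eqref{A1condition} together with Proposition \ref{prop-GammaD} to conclude $v \in X \cap V$ and $-\Delta v = \xi$. You merely spell out the convexity, lower semicontinuity, and the inclusion $X \cap V \subset D(\partial\phi)$ (via Green's formula), which the paper leaves implicit.
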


Here let us recall the definition of the \emph{subdifferential operator}
$\partial \varphi : H \to H$ of a proper, lower semicontinuous and
convex functional $\varphi$ defined on a Hilbert space $H$,
\begin{equation}\label{subdif}
\partial \varphi(u) := \left\{
\xi \in H \colon \varphi(v) - \varphi(u) \geq (\xi, v - u)_H \ \mbox{
for all } v \in D(\varphi)
\right\}
\quad \mbox{ for } \ u \in D(\varphi),
\end{equation}
where $(\cdot,\cdot)_H$ stands for the inner product in $H$ and
$D(\varphi) := \{w \in H \colon \varphi(w) < +\infty\}$,
with domain $D(\partial \varphi) := \{w \in D(\varphi) \colon \partial
\varphi (w) \neq \emptyset\}$. It is well known that $\partial \varphi$
is a (possibly multivalued) maximal monotone operator in $H$ (see,
e.g.,~\cite{Bre73} for more details).
\begin{proof}[Proof of Lemma \ref{lem-mm}]
We note that the restriction $\phi_0 := \phi|_V$ of $\phi$ onto
 $V$ is Fr\'echet differentiable and the derivative $\phi_0'$ of
 $\phi_0$ satisfies
\begin{equation}\label{4}
\langle \phi_0'(u), z \rangle_V = \int_\Omega \nabla u \cdot
      \nabla z \, \d x \quad \mbox{ for all } \ z \in V.
\end{equation}
 Now, let $u \in D(\partial \phi)$ and $\xi \in \partial \phi(u) \subset
 L^2(\Omega)$. From the definition of subdifferentials, we find that $\partial
 \phi(u) \subset \partial \phi_0(u) = \{\phi_0'(u)\}$ for all $u \in
 D(\partial \phi) \subset D(\phi) = V$. Hence $\xi = \phi_0'(u)$, i.e.,
 $\partial \phi(u) = \{\phi_0'(u)\}$ and $\phi_0'(u) \in L^2(\Omega)$;
 here and henceforth, we simply write $\partial \phi(u) =
 \phi_0'(u)$. It follows that
$$
A_1 u = u + \phi_0'(u) = u + \xi \in L^2(\Omega). 
$$
Moreover, by \eqref{A1condition} along with Proposition
 \ref{prop-GammaD}, we deduce that $u = A_1^{-1} (u + \xi) \in X
 \cap V$ and that $u + \xi = A_1 u = - \Delta u + u$.
Therefore we deduce that $\partial \phi(u) = \phi_0'(u) = -\Delta u$ and
 $D(\partial \phi) \subset X \cap V$. On the other hand, it is clear
 that $X \cap V \subset D(\partial \phi)$, and hence, $D(\partial \phi)
 = X \cap V$.
\end{proof} 

Therefore the initial-boundary value problem for \eqref{dnp} equipped
with \eqref{bc-D}--\eqref{ic} can
be rewritten as the Cauchy problem for an evolution equation in
$L^2(\Omega)$ of $u(t) := u(\cdot, t)$,
\begin{equation}\label{ee2}
 \partial_t u(t) + \partial \I (\partial_t u(t)) + \partial \phi(u(t))
  \ni f(t) \ \mbox{ in } L^2(\Omega), \quad 0 < t < T, \quad u(0) = u_0,
\end{equation}
where $f(t) := f(\cdot,t)$ and $\partial \I$ denotes the
subdifferential operator in $L^2(\Omega)$ of the functional $\I :
L^2(\Omega) \to [0,\infty]$ defined by
$$ 
\I(v) = \begin{cases}
	 0 &\mbox{ if } v \geq 0 \ \mbox{ a.e.~in } \Omega,\\
	 \infty &\mbox{ otherwise}
	\end{cases}
	\quad \mbox{ for } \ v \in L^2(\Omega).
$$
We note that $\partial \I(v) = \alpha(v(\cdot))$ for $v \in
L^2(\Omega)$, where $\alpha(\cdot)$ is a multivalued function given by
\eqref{alpha}, and $D(\partial \I) = \{ v \in L^2(\Omega) \colon v \geq
0 \mbox{ a.e.~in } \Omega\}$ (see, e.g.,~\cite{Bre73}). 

Here and henceforth, for simplicity, we use the same notation $\I$ for
the indicator function over $[0,+\infty)$ defined on $\mathbb R$ as well
as for that over the set $\{v \in L^2(\Omega)
\colon v \geq 0 \ \mbox{ a.e.~in } \Omega\}$ defined on $L^2(\Omega)$,
unless any confusion may arise. Moreover, the subdifferential operators
of the both indicator functions are also denoted by $\partial \I$.

Strong solutions of \eqref{ee2} are defined as follows:

\begin{Def}[Strong solution of \eqref{ee2}]\label{D:sol}
For given $f\in L^2(Q_T)$ and $u_0\in L^2(\Omega)$, a function $u \in
 C([0,T];L^2(\Omega))$ is called a \emph{strong solution} of
 \eqref{is1}--\eqref{ic} on $[0,T]$, if the following conditions are
 satisfied\/{\rm :}
\begin{itemize}
\item
     $u \in W^{1,2}(0,T;L^2(\Omega)) \cap L^2(0,T;X \cap
     V)$\/{\rm ;}
\item It holds that
     \begin{equation}\label{ee2-EQ}
      \partial_t u(t) + \partial \I(\partial_t u(t)) + \partial \phi(u(t))
     \ni f(t)
      \ \mbox{ in } L^2(\Omega) \ \mbox{ for a.e. } t \in (0,T)\/{\rm ;}
      \end{equation}
\item
     $u(0) = u_0$, 
\end{itemize}
where the functionals $\I$ and $\phi$ on $L^2(\Omega)$ are defined
 as above.
\end{Def}

\begin{Prop}[Equivalence of solutions]
The notion of strong solutions for \eqref{ee2} is equivalent to that for
 \eqref{eqn}--\eqref{ic} defined by Definition {\rm \ref{def-ss}}.
\end{Prop}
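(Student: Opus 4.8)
The plan is to notice that Definition \ref{def-ss} and Definition \ref{D:sol} impose \emph{the same} underlying regularity, namely $u \in C([0,T];L^2(\Omega))$ together with $u \in W^{1,2}(0,T;L^2(\Omega)) \cap L^2(0,T;X\cap V)$, and \emph{the same} initial condition $u(0)=u_0$ (which, for $u \in C([0,T];L^2(\Omega))$, coincides with $u|_{t=0}=u_0$ a.e.~in $\Omega$). Hence the entire content of the proposition reduces to a single equivalence: under this common regularity, the pointwise equation of Definition \ref{def-ss}(ii) holds a.e.~in $Q_T$ if and only if the $L^2(\Omega)$-valued inclusion \eqref{ee2-EQ} holds for a.e.~$t \in (0,T)$. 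I would therefore state the proof as a chain of equivalences carried out simultaneously in both directions.

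First I would use $u \in L^2(0,T;X\cap V)$ to guarantee $u(t) \in X \cap V$ for a.e.~$t$, so that Lemma \ref{lem-mm} applies and $\partial \phi(u(t)) = -\Delta u(t)$ is single-valued. Substituting this into \eqref{ee2-EQ}, the inclusion becomes
\[
f(t) + \Delta u(t) - \partial_t u(t) \in \partial \I(\partial_t u(t)) \quad \text{in } L^2(\Omega),
\]
so that only the operator $\partial \I$ remains to be unravelled. Second, I would invoke the pointwise characterization of $\partial \I$ in $L^2(\Omega)$ recorded just after \eqref{ee2}, namely that $\zeta \in \partial \I(v)$ holds if and only if $v \geq 0$ a.e.~in $\Omega$ and $\zeta(x) \in \alpha(v(x))$ for a.e.~$x \in \Omega$. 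This converts the displayed $L^2$-inclusion into the a.e.~(in $(x,t)$) pointwise relation
\[
\partial_t u(x,t) + \alpha(\partial_t u(x,t)) \ni \Delta u(x,t) + f(x,t).
\]

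The only genuinely arithmetic step is then to use that $s \mapsto s + \alpha(s)$ is the multivalued inverse of the positive part function, as already noted below \eqref{alpha}: elementarily, for real $s,r$ one verifies that $s + \alpha(s) \ni r$ is equivalent to $s = (r)_+$, by distinguishing the case $s>0$ (which forces $r = s > 0$) from the case $s = 0$ (which forces $r \le 0$). Applying this pointwise with $s = \partial_t u(x,t)$ and $r = \Delta u(x,t) + f(x,t)$ turns the inclusion into $\partial_t u = (\Delta u + f)_+$ a.e.~in $Q_T$, which is precisely Definition \ref{def-ss}(ii). Reading the chain from top to bottom and from bottom to top gives both implications, and the equivalence of the two solution notions follows.

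I expect the only delicate point to be the measurability bookkeeping in the reduction of the abstract $L^2(\Omega)$-inclusion to the pointwise inclusion, i.e.~the justification that $\partial \I$ acts as the Nemytskii operator induced by $\alpha$. This is, however, the standard description of the subdifferential of an integral functional, already asserted in the text with a reference to \cite{Bre73}, so it can simply be cited rather than reproved. Everything else is the single substitution furnished by Lemma \ref{lem-mm} together with the elementary inverse-function identity, and assembling these yields the claim.
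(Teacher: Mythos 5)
Your proposal is correct and follows essentially the same route as the paper: invoke Lemma \ref{lem-mm} to identify $\partial\phi(u(t))=-\Delta u(t)$, use the pointwise (Nemytskii) characterization of $\partial\I$ recorded after \eqref{ee2}, and exploit that $s\mapsto s+\alpha(s)$ is the multivalued inverse of the positive-part function. Your explicit case-check of the identity $s+\alpha(s)\ni r \Leftrightarrow s=(r)_+$ is a slightly more detailed rendering of a step the paper simply asserts, and the integrability of $\Delta u + f - \partial_t u$ needed for the converse direction is supplied by the common regularity class, exactly as in the paper.
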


\begin{proof}
Since $\alpha(s)$ is the inverse mapping of $s \mapsto (s)_+$, one observes
 that \eqref{eqn} is equivalent to \eqref{dnp} at each $(x,t) \in
 Q$. Moreover, due to Lemma \ref{lem-mm}, for each strong solution $u$
 of \eqref{ee2}, $u(x,t)$ satisfies \eqref{dnp} a.e.~in $\Omega \times
 (0,T)$. Conversely, let $u$ be a strong solution of \eqref{eqn} in the
 sense of Definition \ref{def-ss}. Then from the regularity condition {\rm
 (ii)} of Definition \ref{def-ss}, the right-hand-side of
 the inclusion 
$$
\alpha(\partial_t u) \ni \Delta u + f - \partial_t u
$$
belongs to $L^2(\Omega)$ for a.e.~$t \in (0,T)$.
Moreover, recalling that
$$
\partial \I(v) = \{
 \xi(\cdot) \in L^2(\Omega) \colon \xi(x) \in \alpha(v(x)) \mbox{ for a.e. } x \in \Omega \}
$$
(see above), the evolution equation \eqref{ee2-EQ} holds in
 $L^2(\Omega)$ for a.e.~$t \in (0,T)$.
\end{proof}

We next provide a chain-rule for the function $t \mapsto \phi(u(t))$,
which is derived from a standard theory on subdifferential calculus and
which will be used frequently to derive energy estimates in later
sections.

\begin{Lem}\label{lemforuq}
We suppose that $u\in W^{1,2}(0,T;L^2(\Omega)) \cap L^2(0,T;X \cap
 V)$. Then we have\/{\rm :}
\begin{enumerate}[{\rm (i)}]
\item
the function 
$$
t\mapsto \phi(u(t)) = \dfrac 1 2 \int_\Omega |\nabla u(t,x)|^2\,\d x
$$
belongs to $W^{1,1}(0,T)$\/{\rm ;}
\item
for a.e.~$t \in (0,T)$, it holds that
$$
\dfrac 1 2 \frac{\d}{\d t}\int_\Omega |\nabla u|^2\,\d x
= \dfrac{\d}{\d t} \phi(u(t))
= \left( \partial \phi(u(t)), \partial_t u(t) \right)_{L^2(\Omega)}
=-\int_\Omega \partial_t u \,\Delta u\,\d x,
$$
where $(\cdot,\cdot)_{L^2(\Omega)}$ denotes the inner product of
	     $L^2(\Omega)$\/{\rm ;}
\item
$u\in C([0,T];V)$.
\end{enumerate}

\end{Lem}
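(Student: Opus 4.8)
The plan is to derive all three assertions from the abstract chain-rule for subdifferentials of a proper, lower semicontinuous, convex functional on a Hilbert space, applied with $H = L^2(\Omega)$ and the functional $\phi$. By Lemma \ref{lem-mm}, $\phi$ is proper, lower semicontinuous and convex on $L^2(\Omega)$, with $D(\partial \phi) = X \cap V$ and $\partial \phi(v) = -\Delta v$. The hypothesis $u \in W^{1,2}(0,T;L^2(\Omega))$ supplies $\partial_t u \in L^2(0,T;L^2(\Omega))$, while $u \in L^2(0,T;X \cap V)$ guarantees both that $u(t) \in D(\partial \phi)$ for a.e.~$t$ and that the single-valued selection $t \mapsto \partial \phi(u(t)) = -\Delta u(t)$ lies in $L^2(0,T;L^2(\Omega))$, since $u \in L^2(0,T;H^2(\Omega))$. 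These are exactly the structural assumptions needed to trigger the chain rule.

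First I would invoke the chain-rule lemma (e.g.~Brezis \cite{Bre73}): under precisely these hypotheses, the scalar function $t \mapsto \phi(u(t))$ is absolutely continuous on $[0,T]$, and for a.e.~$t$ one has $\frac{\d}{\d t}\phi(u(t)) = (\xi, \partial_t u(t))_{L^2(\Omega)}$ for every $\xi \in \partial \phi(u(t))$. Absolute continuity immediately yields $t \mapsto \phi(u(t)) \in W^{1,1}(0,T)$, which is (i). Since here $\partial \phi(u(t)) = -\Delta u(t)$ is single-valued, choosing $\xi = -\Delta u(t)$ and recalling $\phi(u(t)) = \frac12 \int_\Omega |\nabla u|^2 \, \d x$ produces the entire chain of equalities asserted in (ii).

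For (iii), the idea is to combine norm continuity with weak continuity in the uniformly convex space $V = H^1(\Omega)$. Writing $\|u(t)\|_V^2 = \|u(t)\|_{L^2(\Omega)}^2 + 2\phi(u(t))$, the first summand is continuous because $u \in W^{1,2}(0,T;L^2(\Omega)) \hookrightarrow C([0,T];L^2(\Omega))$, and the second is continuous by (i); hence $t \mapsto \|u(t)\|_V$ is continuous, and in particular $\{u(t)\}_{t \in [0,T]}$ is bounded in $V$. Next, for $t_n \to t$, boundedness in $V$ extracts a weakly convergent subsequence whose limit must coincide with $u(t)$, because $u(t_n) \to u(t)$ strongly in $L^2(\Omega)$ and weak $V$-convergence forces $L^2$-convergence through the compact embedding $V \hookrightarrow\hookrightarrow L^2(\Omega)$; uniqueness of the limit then promotes this to $u(t_n) \rightharpoonup u(t)$ weakly in $V$ along the full sequence. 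Weak convergence together with convergence of the norms in the uniformly convex Hilbert space $V$ upgrades to strong convergence $u(t_n) \to u(t)$ in $V$, giving $u \in C([0,T];V)$.

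The main obstacle, and the only genuinely nontrivial ingredient, is the abstract chain rule itself: because $\phi$ and $\partial \phi$ are both unbounded, differentiability of $t \mapsto \phi(u(t))$ cannot be read off directly and must be obtained through the Yosida-regularization and monotonicity machinery underlying the cited lemma. Once that is secured, parts (i) and (ii) are immediate, and (iii) reduces to the routine weak-plus-norm convergence argument sketched above.
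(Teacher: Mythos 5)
Your proof is correct and follows essentially the same route as the paper: parts (i) and (ii) are obtained from the Brezis chain rule (Lemma~3.3 of \cite{Bre73}) applied to $\phi$ with the characterization $\partial\phi(v)=-\Delta v$ from Lemma~\ref{lem-mm}, and part (iii) from continuity of $t\mapsto\|u(t)\|_V$ together with weak continuity in $V$ and uniform convexity. The only cosmetic difference is that the paper gets weak continuity of $u$ with values in $V$ by citing Lemma~8.1 of \cite{LM}, whereas you derive it directly by a subsequence-extraction argument (identifying the weak limit through the strong $L^2(\Omega)$-convergence); both are valid.
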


\begin{proof}
Thanks to Lemma~3.3 of \cite{Bre73}, the assertions (i) and (ii) follow
 immediately. Concerning (iii), since $u$ belongs to $L^\infty(0,T;V)$
 and $C([0,T];L^2(\Omega))$, by exploiting Lemma 8.1 of~\cite{LM}, one
 finds that $u$ is continuous on $[0,T]$ with respect to the weak
 topology of $V$. On the other hand, $t \mapsto \|u(t)\|_V$ is continuous on
 $[0,T]$ by (i). Therefore from the uniform convexity of $\|\cdot\|_V$,
 we deduce that $t \mapsto u(t)$ is continuous on $[0,T]$ with respect
 to the strong topology of $V$.
\end{proof}

Before proceeding to a proof of Theorem \ref{th-uni}, let us note that
\begin{align}\label{ab}
|a_+-b_+|^2\le |a_+-b_+||a-b| = (a_+-b_+)(a-b)
\quad
\mbox{ for all } \ a,b\in \R,
\end{align}
since the function $s \mapsto s_+ = s \vee 0$ is nondecreasing and 
non-expansive, that is, $|a_+-b_+|\le |a-b|$.
Now, we are ready to prove Theorem \ref{th-uni}.

\begin{proof}[Proof of Theorem~\ref{th-uni}]
For each $i = 1,2$, let $u_i$ be a strong solution of \eqref{is1}--\eqref{ic}
 with $u_0 = u_{0,i} \in V$ and $f = f_i \in L^2(0,T;L^2(\Omega))$ and
 set $u = u_1 - u_2$.
Then due to Lemma~\ref{lemforuq}, we have
\begin{align*}
\dfrac 12 \dfrac{\d}{\d t}
\int_\Omega |\nabla u|^2 \, \d x
&= - \int_\Omega \partial_t u \, \Delta u \,\d x\\
&= - \int_\Omega \left[ (\Delta u_1+f_1)_+-(\Delta u_2+f_2)_+ \right]
\left[ (\Delta u_1+f_1)-(\Delta u_2+f_2) - f_1 + f_2 \right] \,\d x\\
&\stackrel{\eqref{ab}}{\le} - \dfrac 1 2 \int_\Omega \left|
(\Delta u_1+f_1)_+-(\Delta u_2+f_2)_+ \right|^2 \,\d x \\
&\qquad + \dfrac 1 2 \int_\Omega |f_1 - f_2|^2 \, \d x
\quad \mbox{ for a.e. } t\in (0,T),
\end{align*}
which implies that 
$$
\int_\Omega |\partial_t u_1 - \partial_t u_2|^2 \, \d x
+ \dfrac{\d}{\d t} \int_\Omega |\nabla u_1 - \nabla u_2|^2 \, \d x
\leq \int_\Omega |f_1 - f_2|^2 \, \d x
\quad \mbox{ for a.e. } t\in (0,T).
$$
Integrate both sides with respect to $t$ to obtain
\begin{align}
\int^T_0 \|\partial_t u_1(t)-\partial_t u_2(t)\|_{L^2(\Omega)}^2 \, \d t
+ \sup_{t \in [0,T]} \|\nabla u_1(t)-\nabla u_2(t)\|_{L^2(\Omega)}^2
\qquad \nonumber \\
\leq 2 \left( \|\nabla u_{0,1}-\nabla u_{0,2}\|_{L^2(\Omega)}^2
+ \int^T_0 \|f_1(t)-f_2(t)\|_{L^2(\Omega)}^2 \, \d t \right).\label{conti-dep}
\end{align}
In particular, if $u_{0,1} = u_{0,2}$ and $f_1 = f_2$, then $u_1$
 coincides with $u_2$ a.e.~in $Q_T$.
Consequently, the solution of \eqref{is1}--\eqref{ic} is unique.
\end{proof}

\begin{Cor}[Continuous dependence of solutions on data]
 For each $T > 0$ and $i = 1,2$, let $u_i$ be the strong solution of
 \eqref{is1}--\eqref{ic} on $[0,T]$ with $u_0 = u_{0,i} \in V$ and $f =
 f_i \in L^2(Q_T)$. Then \eqref{conti-dep} holds true.
\end{Cor}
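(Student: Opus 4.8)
The plan is to observe that inequality \eqref{conti-dep} has in fact already been derived in the course of proving Theorem~\ref{th-uni}, where the computation nowhere used the hypotheses $u_{0,1}=u_{0,2}$ or $f_1=f_2$; uniqueness was obtained merely by specializing the right-hand side to zero. The content of this corollary is thus precisely that same energy estimate, now read as a stability bound for arbitrary data. Accordingly, I would simply re-run the argument of Theorem~\ref{th-uni} for the difference of two solutions with possibly distinct initial data and forcing.

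Concretely, writing $u:=u_1-u_2$, the first step is to invoke the chain rule of Lemma~\ref{lemforuq}, which is available because each $u_i$ carries the regularity $u_i\in W^{1,2}(0,T;L^2(\Omega))\cap L^2(0,T;X\cap V)$ built into the definition of a strong solution; this yields, for a.e.~$t\in(0,T)$,
$$
\tfrac12\frac{\d}{\d t}\int_\Omega|\nabla u|^2\,\d x
=-\int_\Omega \partial_t u\,\Delta u\,\d x .
$$
The second step is to substitute $\partial_t u_i=(\Delta u_i+f_i)_+$ and to decompose $\Delta u_1-\Delta u_2=(\Delta u_1+f_1)-(\Delta u_2+f_2)-(f_1-f_2)$. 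Applying the non-expansiveness inequality \eqref{ab} to the pair $a=\Delta u_1+f_1$, $b=\Delta u_2+f_2$ controls the principal term by $-\int_\Omega|a_+-b_+|^2\,\d x$, while the cross term involving $f_1-f_2$ is absorbed by Young's inequality, half of it against the good term $\int_\Omega|a_+-b_+|^2\,\d x=\int_\Omega|\partial_t u_1-\partial_t u_2|^2\,\d x$. This produces the differential inequality
$$
\int_\Omega|\partial_t u_1-\partial_t u_2|^2\,\d x
+\frac{\d}{\d t}\int_\Omega|\nabla u_1-\nabla u_2|^2\,\d x
\le\int_\Omega|f_1-f_2|^2\,\d x .
$$
Integrating over $[0,t]$ and using $u(0)=u_{0,1}-u_{0,2}$ then gives the desired bound.

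There is essentially no genuine obstacle here beyond the bookkeeping already performed for Theorem~\ref{th-uni}; the only point worth emphasizing is that the estimate is structurally a continuous-dependence statement and that each step — the chain rule, the pointwise inequality \eqref{ab}, and the Young splitting of the forcing term — is insensitive to whether the data coincide. One should simply verify that the factor $2$ on the right-hand side of \eqref{conti-dep} arises from bounding both the supremum term $\sup_{t\in[0,T]}\|\nabla u_1(t)-\nabla u_2(t)\|_{L^2(\Omega)}^2$ and the time-integral term $\int^T_0\|\partial_t u_1-\partial_t u_2\|_{L^2(\Omega)}^2\,\d t$ by the common quantity $\|\nabla u_{0,1}-\nabla u_{0,2}\|_{L^2(\Omega)}^2+\int^T_0\|f_1-f_2\|_{L^2(\Omega)}^2\,\d t$ and adding the two, so that \eqref{conti-dep} is reproduced verbatim.
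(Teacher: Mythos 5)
Your proposal is correct and coincides with the paper's own argument: the estimate \eqref{conti-dep} is derived verbatim in the proof of Theorem~\ref{th-uni} for two solutions with arbitrary data $u_{0,i}$ and $f_i$, using exactly the chain rule of Lemma~\ref{lemforuq}, the decomposition of $\Delta u_1-\Delta u_2$, inequality \eqref{ab}, and Young's inequality, and the corollary simply reads that estimate as a continuous-dependence bound. Your accounting for the factor $2$ (bounding the supremum term and the time-integral term separately by the common right-hand side and adding) is also the correct reading of how \eqref{conti-dep} follows from the integrated differential inequality.
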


\section{Existence of solutions and comparison principle}\label{sec-td} 

In this section, we shall prove Theorem \ref{T:ex} on the existence of
solutions for \eqref{is1}--\eqref{ic}.
Let $T>0$ be fixed. We denote by $\tau$ a division
$\{t_0, t_1, \ldots, t_m \}$ of the interval $[0,T]$ given by
$$
0=t_0<t_1<\ldots <t_m=T,
\quad
\tau_k :=t_k-t_{k-1}
\ \mbox{ for } \ k=1,\ldots,m,
\quad |\tau|:=\max_{k=1,\ldots,m} \tau_k.
$$
We shall construct $u_k \in X \cap V$ (for $k = 1,2,\ldots,m$), which is
an approximation of $u(t_k)$ for a solution $u$ of \eqref{eqn} by the
backward-Euler scheme
\begin{equation}\label{star}
\dfrac{u_k-u_{k-1}}{\tau_k} = \left( \Delta u_k + f_k \right)_+ \ \mbox{
a.e.~in } \Omega,
\end{equation}
where $f_k\in L^2(\Omega)$ is given by
$$
f_k := \dfrac 1 {\tau_k} \int^{t_k}_{t_{k-1}} f(\cdot,s) \,\d s.
$$
For given $u_0\in V$, we shall inductively define $u_k \in V$ for $k =
1,2,\ldots, m$ as a (global) minimizer of the functional
\begin{equation}
J_k(v):=\frac{1}{2\tau_k}\int_\Omega |v|^2\,\d x
+\frac{1}{2}\int_\Omega |\nabla v|^2\,\d x
- \left\langle \dfrac{u_{k-1}}{\tau_k} + f_k ,v \right\rangle_V \label{Jk1}
\quad \mbox{ for } \ v \in V
\end{equation}
subject to
\begin{equation}
v \in K_0^k:=\{ v\in V \colon v \ge u_{k-1} \ \mbox{ a.e.~in } \Omega
 \}.\label{Jk2}
\end{equation}

\begin{Rem}[Derivation of the discretized problems]
{\rm
 The minimization problems with constraints stated above can be
 also derived from a discretization of the evolution equation \eqref{ee2},
 which is equivalent to \eqref{is1} (see Remark \ref{R:def} and
 Section \ref{sec-uni}). 
A natural time-discretization of \eqref{ee2} may be given as
\begin{equation}\label{td2}
\dfrac{u_k-u_{k-1}}{\tau_k} + \partial_V \I \left(
 \dfrac{u_k-u_{k-1}}{\tau_k} \right) - \Delta u_k \ni f_k
\ \mbox{ in } V'
\end{equation}
(here $\partial_V$ stands for the subdifferential of the functional $\I$
 restricted onto $V$),
which is an Euler-Lagrange equation for the functional
$$
E_k(v) := \dfrac{1}{2\tau_k} \int_\Omega |v|^2 \, \d x
+ \I \left( \dfrac{v - u_{k-1}}{\tau_k} \right)
+ \dfrac 1 2 \int_\Omega |\nabla v|^2 \, \d x 
- \left\langle \dfrac{u_{k-1}}{\tau_k} + f_k, v
 \right\rangle_V \ \mbox{ for } \ v \in V.
$$
Indeed, since $E_k$ is coercive, lower semicontinuous and convex in $V$,
 $E_k$ admits a global minimizer $u_k$ over $V$, and moreover, $u_k$ solves
 \eqref{td2} in $V'$. Here we note that the minimization of $E_k$ over
 $V$ is equivalent to that of $J_k$ over $K_0^k$ from the fact that
$$
\I \left( \dfrac{v-u_{k-1}}{\tau_k} \right) 
= I_{[\,\cdot\, \geq u_{k-1}]}(v)
:= \begin{cases}
    0 &\mbox{ if } \ v \geq u_{k-1} \ \mbox{ a.e.~in } \Omega,\\
    \infty &\mbox{ otherwise}
   \end{cases}
 \quad \mbox{ for } \ v \in L^2(\Omega).
$$
}
\end{Rem}

Applying the regularity theory established in Section \ref{sec-vi}, one
can actually obtain the unique minimizer $u_k$ of $J_k$ over $K_0^k$ for each
$k$. More precisely, we obtain the following theorem, where we set
$$
g_k:=\frac{u_k-u_{k-1}}{\tau_k}-\Delta u_k-f_k.
$$
\begin{Lem}[Existence and regularity of minimizers]\label{th-uk}
For given $u_0\in V$ and each $k = 1,2,\ldots, m$, there exists a unique
 element $u_k \in K_0^k$ which minimizes \eqref{Jk1} subject to \eqref{Jk2}.
Moreover, for each $k = 1,2,\ldots, m$, the minimizer $u_k$ belongs to
 $X$ and fulfills \eqref{star}, that is,
\begin{align}
&u_k - u_{k-1} \geq 0 \ \mbox{ a.e.~in } \ \Omega,\label{n40}\\
&g_k \geq 0 \ \mbox{ a.e.~in } \ \Omega,
&\label{c-K1}\\
&\langle g_k, u_k-u_{k-1} \rangle_V =0,
&\label{c-c}
\end{align}
Furthermore, one has
\begin{align}
&\langle g_k, v-u_k \rangle_V \ge 0
\quad \mbox{ for all }\ v\in K_0^k,
&\label{c-b}\\
&\langle g_k+f_k+\Delta u_{k-1}, v-u_k \rangle_V \ge 0
\quad
\mbox{ for all } \ v\in K_1^k,
&\label{c-d}
\end{align}
where $\Delta$ is the Laplace operator from $X \cap V$ into
 $L^2(\Omega)$ {\rm (}see Section \ref{sec-uni}{\rm )} and the set
 $K^k_1$ is given by
\begin{align*}
 K_1^k:=\left\{v\in V \colon \frac{v-u_{k-1}}{\tau_k}-\Delta v-f_k\ge
  0 \ \mbox{\rm in } V' \right\}.
\end{align*}
In addition, suppose that the conditions \eqref{c1} and \eqref{c3} hold. If $u_0\in X^p \cap V$
 and $\{f_k\}_{k=1}^m\subset L^p(\Omega)$, 
then $\{u_k\}_{k=1}^m\subset X^p$ and it holds that
\begin{align}\label{c-K2}
0\le g_k
\le \left(-\Delta u_{k-1}-f_k \right)_+
\ \mbox{ a.e.~in } \Omega
\ \mbox{ for each } \ k=1,\ldots,m.
\end{align}
\end{Lem}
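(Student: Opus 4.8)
The plan is to realize the discretized problem \eqref{star} as an instance of the variational inequality of obstacle type studied in Section~\ref{sec-vi}, and then to read off existence, regularity and the structural relations \eqref{n40}--\eqref{c-K2} from the equivalences in Proposition~\ref{th1}, Corollary~\ref{cor1} and Lemma~\ref{lem-cp-vi}. Concretely, for fixed $k$ I would apply the framework of Section~\ref{sec-vi} with the choices
\begin{align*}
\sigma := \frac{1}{\tau_k}, \quad
\psi := u_{k-1}, \quad
f := \frac{u_{k-1}}{\tau_k} + f_k,
\end{align*}
so that the bilinear form $a(\cdot,\cdot)$ becomes $\int_\Omega(\nabla u\cdot\nabla v + \tau_k^{-1}uv)\,\d x$, the obstacle set $K_0$ equals $K_0^k$, and the functional $J$ of \eqref{a2} coincides with $J_k$ up to the additive constant $\frac{1}{2\tau_k}\|u_{k-1}\|_{L^2(\Omega)}^2 + \langle f_k, u_{k-1}\rangle_V$, which does not affect the minimizer. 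Since $\sigma = 1/\tau_k > 0$, the coercivity hypothesis \eqref{A1} holds regardless of whether $\cH^{n-1}(\GammaD) > 0$, so Proposition~\ref{th1} applies directly and yields a unique minimizer $u_k$ of $J_k$ over $K_0^k$ satisfying conditions (a)--(e). This disposes of the first assertion (existence and uniqueness of $u_k$) and proceeds by induction on $k$, since $u_{k-1} \in V$ is available from the previous step and $u_0 \in V$ is given.

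Next I would translate conditions (a)--(e) into \eqref{n40}--\eqref{c-d}. Observe that with the above identifications one has $Au = \tau_k^{-1}u - \Delta u$ in the strong sense once regularity is known, and the residual $g_k = \tau_k^{-1}(u_k - u_{k-1}) - \Delta u_k - f_k$ is exactly $Au_k - f$ in $V'$. Condition $u_k \in K_0 = K_0^k$ is precisely \eqref{n40}; the membership $u_k \in K_1$ from (c) is precisely \eqref{c-K1} (i.e. $g_k \ge 0$ in $V'$, hence a.e.\ after the regularity step); the complementarity equation $\langle Au_k - f, u_k - \psi\rangle_V = 0$ of (c) is \eqref{c-c}; the variational inequality (b) rewritten as $\langle Au_k - f, v - u_k\rangle_V \ge 0$ is \eqref{c-b}; and (d), after noting $\hat f = A\psi = A u_{k-1} = \tau_k^{-1}u_{k-1} - \Delta u_{k-1}$ so that $Au_k - \hat f = g_k + f_k + \Delta u_{k-1}$, becomes \eqref{c-d}. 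Thus each relation is a direct transcription of one of the equivalent conditions, and no new computation beyond identifying $A$, $\psi$, $f$ is needed.

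For the regularity statement I would invoke Corollary~\ref{cor1}. Under the additional hypotheses \eqref{c1} and \eqref{c3}, with $u_0 \in X^p \cap V$ and $f_k \in L^p(\Omega)$, one checks inductively that the obstacle $\psi = u_{k-1} \in X^p \cap V$ and the datum $f = \tau_k^{-1}u_{k-1} + f_k \in L^p(\Omega)$; note that $A\psi = Au_{k-1} \in L^p(\Omega)$ holds because $u_{k-1} \in X^p$ means $-\Delta u_{k-1} \in L^p(\Omega)$, so the hypothesis \eqref{c2} of Corollary~\ref{cor1} is met. The corollary then gives $u_k \in X^p$ together with the two-sided bound $f \le Au_k \le f \vee \hat f$ a.e.\ in $\Omega$. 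Subtracting $f = \tau_k^{-1}u_{k-1} + f_k$ and using $\hat f = A u_{k-1}$, the upper bound $Au_k \le f \vee \hat f$ becomes $g_k \le (\hat f - f)_+ = (\,-\Delta u_{k-1} - f_k\,)_+$, while the lower bound $Au_k \ge f$ is $g_k \ge 0$; together these are exactly \eqref{c-K2}. This also upgrades \eqref{c-K1} from an inequality in $V'$ to an a.e.\ pointwise inequality. The base case $p = 2$ (without \eqref{c1}, \eqref{c3}) for the plain membership $u_k \in X$ is obtained analogously from \eqref{A1condition} via Lemma~\ref{lem-mm} or the $p=2$ specialization of the regularity machinery.

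The routine parts are the bookkeeping: checking the constant drop in $J_k$ versus $J$, and verifying that the induction hypothesis $u_{k-1} \in X^p \cap V$ propagates, which is automatic once the $k$-th step is proved. The one point demanding genuine care—the main obstacle—is the passage from the weak formulation $g_k \ge 0$ \emph{in $V'$} to the pointwise a.e.\ statements in \eqref{c-K1} and \eqref{c-K2}; this is precisely what the regularity theory of Section~\ref{sec-vi} is designed to supply, so I would lean on Corollary~\ref{cor1} (and Proposition~\ref{P:K2}) rather than attempt a direct argument, and I would take care to confirm that the hypotheses \eqref{c1}, \eqref{c3} on $p$ and on the elliptic regularity of $A_1^{-1}$ are exactly those under which the corollary was proved.
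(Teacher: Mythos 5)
Your proposal is correct and follows essentially the same route as the paper: the same identification $\sigma=1/\tau_k$, $\psi=u_{k-1}$, $f=u_{k-1}/\tau_k+f_k$, the same transcription of conditions (a)--(e) of Proposition~\ref{th1} into \eqref{n40}--\eqref{c-d}, and the same inductive use of Corollary~\ref{cor1} to get $u_k\in X^p$ and the two-sided bound yielding \eqref{c-K2}. (Minor remark: with these choices $J$ and $J_k$ coincide exactly, not merely up to an additive constant, but this does not affect your argument.)
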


\begin{proof}
 Let us start with $k = 1$.
 By setting $\sigma =1/\tau_k > 0$ (i.e., $Au = A_\sigma u = u/\tau_k -
 \Delta u$), $f=f_k+u_{k-1}/\tau_k \in
 L^2(\Omega)$ and $\psi =u_{k-1} \in X \cap V$, one can write
 $K_0^k=K_0$ and $J_k(v)=J(v)$ for $v \in V$ with $K_0$ and $J(v)$
 defined by \eqref{K0} and \eqref{a2} along with \eqref{a1}. Hence,
 one can apply Proposition \ref{th1} and Theorem \ref{th2} to the
 minimization problem of $J_k$ over $K_0^k$. Then the minimizer
 $u_k \in K_0^k$ of $J_k$ over $K_0^k$ uniquely exists, and
 furthermore, \eqref{c-K1}--\eqref{c-d} follow immediately from the fact
 $u_k \in
 K_1^k$, (b), (c) and (d) of Proposition \ref{th1}, respectively. Moreover,
 by virtue of \eqref{A1condition} and Corollary \ref{cor1}, one can
 deduce that $u_k \in X$.
Repeating the argument above for $k = 2,3,\ldots,m$, we can inductively
 obtain $u_k
 \in K_0^k \cap X$ satisfying \eqref{c-K1}--\eqref{c-d} for each $k =
 2,3,\ldots,m$.

 Finally, if $u_0 \in X^p \cap V$, $f_k \in L^p(\Omega)$ and \eqref{c3}
 is satisfied for $p$ satisfying \eqref{c1}, by Corollary \ref{cor1}, we
 can assure that $u_k \in X^p \cap K_0^k$ and
$$
f_k+\dfrac{u_{k-1}}{\tau_k} \leq \dfrac{u_k}{\tau_k} - \Delta u_k 
\leq \left(f_k + \dfrac{u_{k-1}}{\tau_k} \right) \vee 
\left( \dfrac{u_{k-1}}{\tau_k} - \Delta u_{k-1}\right)
\quad \mbox{ for a.e. } x \in \Omega,
$$
which is equivalent to \eqref{c-K2}.
\end{proof}

\begin{proof}[Proof of Theorem \ref{T:ex}]
Let us define the \emph{piecewise linear interpolant} $u_\tau \in
W^{1,\infty}(0,T;X\cap V)$ of $\{u_k\}$ and the \emph{piecewise
constant interpolants} $\bar{u}_\tau \in L^\infty (0,T;X\cap V)$ and
$\bar{f}_\tau \in L^\infty (0,T;L^2(\Omega))$ of $\{u_k\}$ and
$\{f_k\}$, respectively, by
\begin{alignat*}{4}
&u_\tau(t):=u_{k-1}+\frac{t-t_{k-1}}{\tau_k}(u_k-u_{k-1})
\quad && \mbox{ for } \ t\in [t_{k-1},t_k] \ \mbox{ and } \ k=1,\ldots,m,\\
&\bar{u}_\tau(t):=u_k, \quad \bar{f}_\tau(t):=f_k
\quad && \mbox{ for } \ t\in (t_{k-1},t_k] \ \mbox{ and } \ k=1,\ldots,m.
\end{alignat*}
By summing up \eqref{c-c} for $k=1,\ldots,\ell$ with an arbitrary
natural number $\ell \leq m$, we have
\begin{align}
\lefteqn{
\sum_{k = 1}^\ell \tau_k
\left\|\dfrac{u_k - u_{k-1}}{\tau_k}\right\|_{L^2(\Omega)}^2 
+ \dfrac 1 2 \int_\Omega |\nabla u_\ell|^2 \, \d x
- \dfrac 1 2 \int_\Omega |\nabla u_0|^2 \, \d x
}\nonumber \\
&\leq \sum_{k=1}^\ell \tau_k \left( f_k,
 \dfrac{u_k-u_{k-1}}{\tau_k} \right)_{L^2(\Omega)} 
\leq \dfrac 1 2 \sum_{k=1}^\ell \tau_k \|f_k\|_{L^2(\Omega)}^2
+ \dfrac 1 2 \sum_{k=1}^\ell \tau_k \left\|
 \dfrac{u_k-u_{k-1}}{\tau_k}\right\|_{L^2(\Omega)}^2,\label{ei-dsc}
\end{align}
which implies
\begin{align*}
\int_0^t
\|\partial_t u_\tau(s)\|_{L^2(\Omega)}^2 \, \d s
+ \|\nabla \bar u_\tau(t)\|_{L^2(\Omega)}^2
\le
\|\nabla u_0\|_{L^2(\Omega)}^2
+\int_0^T \left\|\bar f_\tau(s)\right\|_{L^2(\Omega)}^2 \, \d s
\quad
\mbox{ for all } \ t \in [0,T].
\end{align*}
Hence, we obtain
\begin{align}
\left\| \partial_t u_\tau \right\|_{L^2(0,T;L^2(\Omega))}^2
 + \sup_{t \in [0,T]} \left\| \nabla \bar{u}_\tau(t)
 \right\|_{L^2(\Omega)}^2
 + \sup_{t \in [0,T]} \left\| \nabla u_\tau(t) \right\|_{L^2(\Omega)}^2
\qquad \nonumber\\
\le
C \left( \left\| \nabla u_0 \right\|_{L^2(\Omega)}^2
+\left\| \bar f_\tau\right\|_{L^2(0,T;L^2(\Omega))}^2 \right).
\label{est1}
\end{align}

Now, let us take a limit as $m \to \infty$ such that $|\tau| \to 0$ and
note that
\begin{equation}\label{c:f}
 \bar f_\tau \to f \quad \mbox{ strongly in } L^2(0,T;L^2(\Omega)).
\end{equation}
In particular, $\{\bar f_\tau\}$ is bounded in $L^2(0,T;L^2(\Omega))$.
Indeed, one can verify that 
$$
\|\bar f_\tau\|_{L^2(0,T;L^2(\Omega))} \leq \|f\|_{L^2(0,T;L^2(\Omega))}.
$$
From the uniform estimate \eqref{est1}, one can take a function  
$
u\in W^{1,2}(0,T;L^2(\Omega)) \cap L^\infty(0,T;V)
$
(in particular, $u \in C([0,T];L^2(\Omega))$ as well)
such that, up to a (non-relabeled) subsequence,  
\begin{alignat}{4}
u_\tau &\to u &&\mbox{ weakly in } W^{1,2}(0,T;L^2(\Omega)),\label{c:u2:12}\\
 & &&\mbox{ weakly star in }L^\infty(0,T;V),\label{c:uV:i}\\
 & &&\mbox{ strongly in }C([0,T];L^2(\Omega)),\label{c:u2:C}\\
\bar{u}_\tau &\to u &&\mbox{ weakly star in }L^\infty(0,T;V),\label{c:buV:i}\\
u_\tau(T) &\to u(T) \qquad &&\mbox{ weakly in } V.\label{c:uT:V}
\end{alignat}
Here, the weak and weak star convergence of $u_\tau$ and $\bar
u_\tau$ immediately follow from the uniform estimate \eqref{est1}.
Moreover, we also note that $u_\tau$ and $\bar u_\tau$ possess a common
limit function. Indeed, by a simple calculation, we observe that
\begin{align*}
 \|u_\tau(t) - \bar u_\tau(t)\|_{L^2(\Omega)}
&= \left|\dfrac{t_k-t}{\tau_k}\right|\left\| u_k - u_{k-1}
 \right\|_{L^2(\Omega)}\\
&\leq \left\| \dfrac{u_k - u_{k-1}}{\tau_k}\right\|_{L^2(\Omega)} \tau_k\\
&\stackrel{\eqref{ei-dsc}}\leq C |\tau|^{1/2}
\quad \mbox{ for all } \ t \in (t_{k-1}, t_k], \ k = 1,2,\ldots,m,
\end{align*}
which yields that
$$
\sup_{t \in [0,T]}  \|u_\tau(t) - \bar u_\tau(t)\|_{L^2(\Omega)} \leq
C|\tau|^{1/2} \to 0.
$$
Thus $u_\tau$ and $\bar u_\tau$ (weakly) converge to a common limit function.
Furthermore, since $V$ is compactly embedded in $L^2(\Omega)$, due to
Ascoli's compactness lemma along with \eqref{est1}, we obtain the strong convergence \eqref{c:u2:C}.
Since $u_\tau(T) = u_m$ is bounded in $V$ by \eqref{ei-dsc}, one can also derive
\eqref{c:uT:V} from \eqref{c:u2:C}. We further observe that $u(0) = u_0$.

We next estimate $\Delta \bar u_\tau$ in $L^2(0,T ; L^2(\Omega))$
by using \eqref{c-K2} and the assumption \eqref{f-hypo1}.
We first rewrite \eqref{c-K2} as
\begin{equation}\label{star2}
- \dfrac{u_k-u_{k-1}}{\tau_k} + f_k \leq - \Delta u_k \leq
\left(- \dfrac{u_k-u_{k-1}}{\tau_k} + f_k\right) \vee
\left(- \dfrac{u_k-u_{k-1}}{\tau_k} - \Delta u_{k-1} \right)
\ \mbox{ a.e.~in } \Omega.
\end{equation}
Since $(u_k - u_{k-1})/\tau_k \geq 0$ a.e.~in $\Omega$ by $u_k \in
K^k_0$, we observe by \eqref{f-hypo1} that
$$
\mbox{(The right-hand side of \eqref{star2})}
\leq f_k \vee (-\Delta u_{k-1})
\leq f^* \vee (-\Delta u_{k-1})
\ \mbox{ a.e.~in } \Omega,
$$
which also iteratively implies that
\begin{align*}
 - \Delta u_k &\leq f^* \vee (-\Delta u_{k-1})\\
 &\leq f^* \vee \left( f^* \vee (-\Delta u_{k-2}) \right)\\
 &= f^* \vee (-\Delta u_{k-2})
 \leq \cdots \leq f^* \vee (-\Delta u_0)
\quad \mbox{ a.e.~in } \Omega.
\end{align*}
Thus we obtain
$$
- \dfrac{u_k-u_{k-1}}{\tau_k} + f_k \leq - \Delta u_k \leq
f^* \vee (-\Delta u_0)
\quad \mbox{ a.e.~in } \Omega,
$$
which yields that
$$
\|\Delta u_k\|_{L^2(\Omega)}^2 \leq 
2 \left( \|f^*\|_{L^2(\Omega)}^2 + \|\Delta u_0\|_{L^2(\Omega)}^2 +
\left\|\dfrac{u_k - u_{k-1}}{\tau_k}\right\|_{L^2(\Omega)}^2
+ \|f_k\|_{L^2(\Omega)}^2 \right)
$$
for $k = 1,2,\ldots, m$. Hence we deduce that
\begin{align}
 \int^T_0 \|\Delta \bar u_\tau(t)\|_{L^2(\Omega)}^2 \, \d t
 &\leq 2 T \left( \|f^*\|_{L^2(\Omega)}^2 + \|\Delta
 u_0\|_{L^2(\Omega)}^2 \right)\nonumber\\
 &\quad + 2 \int^T_0 \left\| \partial_t u_\tau(t)
 \right\|_{L^2(\Omega)}^2 \, \d t
 + 2\int^T_0 \left\| \bar f_\tau (t) \right\|_{L^2(\Omega)}^2 \,\d t
 \leq C
\label{dot}
\end{align}
by using \eqref{est1} and \eqref{c:f}.

Exploiting Proposition \ref{prop-GammaD} with \eqref{A1condition}, we
see that $(I - \Delta) \in \Isom(X \cap V , L^2(\Omega))$, which
together with \eqref{dot} gives
$$
\int^T_0 \|\bar u_\tau(t)\|_{X}^2 \, \d t
\leq C \int^T_0 \left(
\|\Delta \bar u_\tau(t)\|_{L^2(\Omega)}^2
+ \|\bar u_\tau(t)\|_{L^2(\Omega)}^2 
\right)\, \d t
\leq C.
$$
Therefore we have, up to a (non-relabeled)subsequence,
\begin{alignat*}{4}
 \bar u_\tau &\to u \qquad &&\mbox{ weakly in } L^2(0,T;X),\\
 \Delta \bar u_\tau &\to \Delta u \qquad &&\mbox{ weakly in }
 L^2(0,T;L^2(\Omega)),
\end{alignat*}
which particularly implies $u(t) \in D(\Delta) = X \cap V$ for a.e.~$t
\in (0,T)$. Therefore the piecewise constant interpolant
$\bar g_\tau$ of $\{g_k\}$ defined by
$$
\bar g_\tau(t) := g_k \stackrel{\eqref{c-K1}}= \dfrac{u_k - u_{k-1}}{\tau_k} - \Delta u_k - f_k
 \quad \mbox{ for } \ t \in (t_{k-1},t_k]
$$
converges to
\begin{equation}\label{g}
\partial_t u - \Delta u - f =: g
\end{equation}
weakly in $L^2(0,T;L^2(\Omega))$.

It remains to prove that $u$ solves \eqref{is1} for
a.e.~$(x,t) \in Q_T$. To this end, we recall the
evolution equation \eqref{ee2} equivalent to \eqref{is1}. Then it
suffices to check that
$$
\partial_t u \geq 0 \ \mbox{ a.e.~in } Q_T \quad \mbox{ and } \quad
-g(t) \in \partial \I(\partial_t u(t)) \ \mbox{ for a.e.~} t \in (0,T).
$$
To this end, we employ the so-called Minty's trick for maximal monotone
operators, since $\partial \I$ is maximal monotone in $L^2(\Omega)$.
\begin{Prop}[Demiclosedness of maximal monotone operators (see, e.g.,~\cite{Bre73,BCP,B})]\label{P:demiclo}
 Let $A : H \to H$ be a {\rm (}possibly multivalued{\rm )} maximal
 monotone operator defined on a Hilbert space $H$ equipped with a inner
 product $(\cdot,\cdot)_H$. Let $[u_n,\xi_n]$ be
 in the graph of $A$ such that $u_n \to u$ weakly in $H$ and $\xi_n \to
 \xi$ weakly in $H$. Suppose that 
$$
\limsup_{n \to +\infty} ( \xi_n, u_n )_H \leq ( \xi, u )_H. 
$$
Then $[u,\xi]$ belongs to the graph of $A$, and moreover, it holds that
$$
\lim_{n \to +\infty} ( \xi_n, u_n )_H = ( \xi, u )_H. 
$$
\end{Prop}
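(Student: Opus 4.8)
The plan is to route everything through the maximality of $A$, in the form of its standard characterization: a pair $[u,\xi] \in H \times H$ lies in the graph of $A$ \emph{if and only if} the monotonicity inequality $(\xi - \eta, u - v)_H \geq 0$ holds for every $[v,\eta]$ in the graph of $A$. Granting this characterization (for which I would cite \cite{Bre73}), I would first establish the graph membership $[u,\xi]\in\mathrm{graph}(A)$, and then separately upgrade the $\limsup$ hypothesis to a genuine limit.

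First I would fix an arbitrary $[v,\eta]\in\mathrm{graph}(A)$ and test the monotonicity of $A$ against the given sequence, obtaining
$$
(\xi_n - \eta,\, u_n - v)_H \geq 0 \quad \text{for every } n.
$$
Expanding this inner product as $(\xi_n,u_n)_H - (\xi_n,v)_H - (\eta,u_n)_H + (\eta,v)_H$ and passing to the $\limsup$, the three cross terms converge by weak convergence (namely $(\xi_n,v)_H \to (\xi,v)_H$, $(\eta,u_n)_H \to (\eta,u)_H$, while $(\eta,v)_H$ is constant) and hence may be pulled out of the $\limsup$, whereas the diagonal term is controlled precisely by the hypothesis $\limsup_n (\xi_n,u_n)_H \leq (\xi,u)_H$. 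This yields
$$
0 \leq \limsup_{n\to\infty} (\xi_n - \eta,\, u_n - v)_H \leq (\xi - \eta,\, u - v)_H.
$$
As $[v,\eta]$ was arbitrary, the maximality characterization then forces $[u,\xi]\in\mathrm{graph}(A)$.

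For the limit assertion, I would now legitimately insert the pair $[v,\eta]=[u,\xi]$, which has just entered the graph, into the monotonicity inequality for the sequence, giving
$$
(\xi_n, u_n)_H \geq (\xi_n, u)_H + (\xi, u_n)_H - (\xi, u)_H.
$$
Taking $\liminf$ and using $(\xi_n,u)_H \to (\xi,u)_H$ together with $(\xi,u_n)_H \to (\xi,u)_H$ (both by weak convergence) produces $\liminf_n (\xi_n,u_n)_H \geq (\xi,u)_H$. Combined with the standing $\limsup$ hypothesis this pins down $\lim_n (\xi_n,u_n)_H = (\xi,u)_H$, as claimed.

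The computation is essentially a two-line use of monotonicity plus the weak continuity of the bilinear pairing against a strongly (indeed merely weakly, against a fixed vector) convergent argument, so I do not anticipate any real analytic difficulty. The single load-bearing point — the step I would flag explicitly rather than the routine estimates — is the appeal to maximality: without it one can only derive the inequality $(\xi-\eta,u-v)_H\ge 0$ for all graph points, which does not by itself place $[u,\xi]$ in the graph. I would therefore isolate that transition (from ``$[u,\xi]$ is monotonically related to every graph point'' to ``$[u,\xi]$ is itself a graph point'') as the place where maximal monotonicity, rather than mere monotonicity, is indispensable.
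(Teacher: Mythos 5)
Your proof is correct and is precisely the classical argument for this fact; the paper itself gives no proof of Proposition \ref{P:demiclo}, delegating it to the cited references, and your two steps (passing to the $\limsup$ in the monotonicity inequality to verify that $[u,\xi]$ is monotonically related to every graph point, then invoking maximality, and finally testing against $[u,\xi]$ itself to obtain the matching $\liminf$ bound) are exactly the standard argument found there. The one point you rightly flag --- that maximality is needed to pass from ``monotonically related to the graph'' to ``in the graph'' --- is indeed the only non-routine step, and your handling of the $\limsup$ of the sum (extracting the convergent cross terms) is sound.
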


Note that $(u_k - u_{k-1})/\tau_k \geq 0$ a.e.~in $\Omega$.
For an arbitrary $w \in D(\I) = \{v \in L^2(\Omega) \colon v \geq 0
\mbox{ a.e.~in } \Omega\}$, substitute $v = w\tau_k + u_{k-1} \in K_0^k$ to
\eqref{c-b}. Then we see that
$$
0 \stackrel{\eqref{c-b}}{\geq} \langle - g_k, \, v - u_k \rangle_V
= \tau_k \left( -g_k, \, w - \dfrac{u_k - u_{k-1}}{\tau_k}
\right)_{L^2(\Omega)},
$$
which together with the arbitrariness of $w \in D(\I)$ and the
definition of $\I$ implies that
$$
- g_k \in \partial \I \left( \dfrac{u_k - u_{k-1}}{\tau_k} \right),
\quad \mbox{ i.e., }
- \bar g_\tau(t) \in \partial \I \left( \partial_t u_\tau(t) \right).
$$
Moreover, for $k = 1,2,\ldots,m$, we find by \eqref{c-K1} that 
$$
\left( -g_k, \dfrac{u_k - u_{k-1}}{\tau_k}
\right)_{L^2(\Omega)} 
\leq - \left\| \dfrac{u_k - u_{k-1}}{\tau_k} \right\|_{L^2(\Omega)}^2
- \dfrac{\phi(u_k) - \phi(u_{k-1})}{\tau_k}
+ \left( f_k, \dfrac{u_k - u_{k-1}}{\tau_k} \right)_{L^2(\Omega)},
$$
which leads us to get
\begin{align*}
\int^T_0 \left( -\bar g_\tau(t), \partial_t u_\tau(t)
\right)_{L^2(\Omega)} \, \d t
&\leq - \int^T_0 \left\| \partial_t u_\tau(t)
 \right\|_{L^2(\Omega)}^2 \,\d t
- \phi(u_\tau(T)) + \phi(u_0)\\
 & \qquad + \int^T_0 \left( \bar f_\tau(t), \partial_t u_\tau(t)
 \right)_{L^2(\Omega)}  \, \d t. 
\end{align*}
Taking a limsup as $|\tau| \to 0$ in both sides, exploiting the weak
lower semicontinuity of norms and the functional $\phi(\cdot)$, and
recalling Lemma \ref{lemforuq}, we conclude that
\begin{align}
\limsup_{|\tau| \to 0}
\int^T_0 \left( -\bar g_\tau(t), \partial_t u_\tau(t)
\right)_{L^2(\Omega)} \, \d t
&\leq - \int^T_0 \left\| \partial_t u (t) \right\|_{L^2(\Omega)}^2
 \,\d t - \phi(u(T)) + \phi(u_0)\nonumber\\
 & \qquad + \int^T_0 \left( f(t), \partial_t u(t) \right)_{L^2(\Omega)}
 \, \d t\nonumber\\
&=
\int^T_0 \left( - \partial_t u(t) + \Delta u(t) + f(t),
 \partial_t u(t) \right)_{L^2(\Omega)} \, \d t \nonumber\\
&\stackrel{\eqref{g}}=  \int^T_0 \left( - g(t), \partial_t u(t) \right)_{L^2(\Omega)} \, \d t.\label{limsup}
\end{align}
Consequently, by virtue of the (weak) closedness of maximal monotone
operators (see Proposition \ref{P:demiclo} above), it follows that
$\partial_t u(t) \in D(\partial \I)$, i.e., $\partial_t u(t) \geq 0$ a.e.~in $\Omega$, and $- g(t) \in \partial \I(\partial_t u(t))$ for a.e.~$t \in
(0,T)$. Therefore $u$ solves \eqref{ee2}, and hence, $u$ is a strong
solution of \eqref{is1}--\eqref{ic}. Thus Theorem \ref{T:ex} has been
proved.
\end{proof}

\begin{Rem}
{\rm
To prove that $u$ is a strong solution of \eqref{eqn}--\eqref{ic},
it is possible to show the conditions (V1)-(V6) of Theorem \ref{th-ws}
 in \S \ref{od},
instead of the last argument of the proof of Theorem \ref{T:ex}.
Actually, (V2) and (V3) directly follow from \eqref{n40} and \eqref{c-K1}
by taking limit of $|\tau |\to 0$, respectively.
The condition (V4) follows from the estimate \eqref{limsup},
since the left-hand side of \eqref{limsup} is zero
and the right-hand side is non-positive.
}
\end{Rem}

Due to Theorem \ref{th-uni}, the limit of $\{u_\tau\}$ and $\{\bar
u_\tau\}$ is unique, whence they converge along the full sequence.
\begin{Cor}
Sequences $\{u_\tau\}$ and $\{\bar u_\tau\}$ converge to the unique
 solution $u$ of \eqref{eqn}--\eqref{ic} as $|\tau| \to 0_+$.
\end{Cor}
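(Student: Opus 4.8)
The plan is to combine the uniqueness result of Theorem~\ref{th-uni} with the compactness already extracted in the proof of Theorem~\ref{T:ex} via the standard \emph{subsequence principle}: if every subsequence of a given sequence admits a further subsequence converging to one and the same limit, then the whole sequence converges to that limit. The point that makes this work is that the a priori estimate \eqref{est1} (together with \eqref{dot}) is uniform in the partition $\tau$ and depends only on $\|\nabla u_0\|_{L^2(\Omega)}$ and $\|f\|_{L^2(0,T;L^2(\Omega))}$, through $\|\bar f_\tau\|_{L^2(0,T;L^2(\Omega))}\le\|f\|_{L^2(0,T;L^2(\Omega))}$; hence it applies verbatim to \emph{any} sequence of partitions whose mesh tends to zero.

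First I would fix an arbitrary sequence $\{\tau_j\}$ of partitions with $|\tau_j|\to 0$. The uniform bounds \eqref{est1} and \eqref{dot} then guarantee that from \emph{every} subsequence of $\{u_{\tau_j}\}$ one can, exactly as in the proof of Theorem~\ref{T:ex}, extract a further subsequence along which $u_{\tau_j}$ and $\bar u_{\tau_j}$ converge in the topologies \eqref{c:u2:12}--\eqref{c:uT:V} and weakly in $L^2(0,T;X)$, and along which Minty's trick (Proposition~\ref{P:demiclo}) identifies the limit as a strong solution of \eqref{is1}--\eqref{ic} with data $u_0\in X\cap V\subset V$ and $f\in L^2(Q_T)$. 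By Theorem~\ref{th-uni} such a strong solution is unique; call it $u$. Consequently every convergent subsequence so obtained has the \emph{same} limit $u$, independently of all the choices made in the extraction.

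Finally I would invoke the subsequence principle in each of the relevant topologies: since every subsequence of $\{u_{\tau_j}\}$ possesses a further subsequence converging to $u$, the full sequence $\{u_{\tau_j}\}$ converges to $u$, and likewise for $\{\bar u_{\tau_j}\}$; as $\{\tau_j\}$ was arbitrary this gives $u_\tau\to u$ and $\bar u_\tau\to u$ as $|\tau|\to 0_+$. The only point requiring a little care, and the mildest obstacle here, is the legitimacy of the subsequence principle for the weak and weak-star topologies: this is justified because the underlying spaces are either reflexive Hilbert spaces (so bounded sequences are weakly sequentially compact) or duals with separable preduals (e.g.\ $L^\infty(0,T;V)=(L^1(0,T;V'))'$ with $L^1(0,T;V')$ separable), so that Banach--Alaoglu yields weak-star sequential compactness; moreover the strong limit in $C([0,T];L^2(\Omega))$ furnished by \eqref{c:u2:C} pins down all the weaker limits to the same $u$.
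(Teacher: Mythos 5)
Your argument is correct and is exactly the one the paper uses: the corollary follows from the uniqueness of the strong solution (Theorem~\ref{th-uni}) together with the subsequence principle applied to the compactness already extracted in the proof of Theorem~\ref{T:ex}. Your additional remarks on the legitimacy of the subsequence principle in the weak and weak-star topologies only make explicit what the paper leaves implicit.
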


We next prove Theorem \ref{T:comp}.

\begin{proof}[Proof of Theorem \ref{T:comp}]
 Let $u^1$ and $u^2$ be strong solutions of \eqref{is1} with $u_0 =
 u_0^i$ and $f = f^i$ for $i = 1,2$, respectively. By the uniqueness of solutions (see
 Theorem \ref{th-uni}) and the construction of solutions discussed so
 far, one can take discretized solutions $\{u_k^i\}$ for $i = 1,2$ such that
 the piecewise linear interpolant $u_\tau^i$ of $\{u_k^i\}$ converges to
 $u^i$ strongly in $C([0,T];L^2(\Omega))$ as $|\tau| \to 0$, and they
 solve the variational inequalities
$$
\begin{cases}
\ \displaystyle u_k^i \in K_0(u_{k-1}^i) 
 := \{ v \in V
 \colon v \geq u_{k-1}^i \ \mbox{ a.e.~in } \Omega\},\\
\ \displaystyle a_k(u_k^i , v-u_k^i) 
 \geq \int_\Omega \left( f_k^i + u_{k-1}^i/\tau_k \right) (v - u_k^i) \,
 \d x \quad \mbox{ for all } \  v \in K_0(u_{k-1}^i),
\end{cases}
$$
where $a_k(\cdot,\cdot)$ stands for the bilinear form given by
$$
a_k(u,v) = \int_\Omega \nabla u \cdot \nabla v \,\d x 
 + \dfrac 1 {\tau_k} \int_\Omega u v \,\d x
\quad \mbox{ for } \ u, v \in V.
$$
By iteratively applying the comparison theorem for
 elliptic variational inequalities (see Theorem \ref{T:comp-ell}), from
 the fact that $f^1 \leq f^2$ a.e.~in $Q_T$ and $u_0^1 \leq u_0^2$
 a.e.~in $\Omega$, one can deduce that
$$
 u_k^1 \leq u_k^2 \ \mbox{ a.e.~in } Q_T 
\quad \mbox{ for all } \ k = 1,2,\ldots,m,
$$
which also implies $u^1_\tau(t) \leq u^2_\tau(t)$ a.e.~in $\Omega$ for
 all $t \in (0,T)$. Then passing to the limit as $|\tau| \to 0$, we
 conclude that $u^1 \leq u^2$ a.e.~in $Q_T$.
\end{proof}

\section{Long-time behavior of solutions}
\label{sec-ab} 

This section is devoted to proving Theorem \ref{T:asp}.
Let us begin with deriving a uniform estimate for $u(t)$ for $t \geq 0$.
 To do so, recall the construction of the unique solution $u = u(x,t)$ of
 \eqref{is1}--\eqref{ic} performed in the proof of Theorem \ref{T:ex}
 and particularly note that 
 \begin{equation}\label{recall}
  u_k \geq u_{k-1} \quad \mbox{ and } \quad g_k := \dfrac{u_k -
   u_{k-1}}{\tau_k} - \Delta u_k - f_k \geq 0
   \quad \mbox{ a.e.~in } \Omega.
 \end{equation}
It follows that
\begin{equation}\label{lo1}
h_k := g_k + f_k = \dfrac{u_k - u_{k-1}}{\tau_k} - \Delta u_k
\geq - \Delta u_k \quad \mbox{ a.e.~in } \Omega.
\end{equation}
We also recall the estimate \eqref{c-K2}, which gives
\begin{equation}\label{lo3}
f_k \leq h_k \leq (- \Delta u_{k-1}) \vee f_k
\quad \mbox{ a.e.~in } \Omega.
\end{equation}
Therefore by (H3) we find that
\begin{align*}
f_k \stackrel{\eqref{lo3}}\leq h_k 
 &\stackrel{\eqref{lo3}}{\leq} \left(-\Delta u_{k-1}\right) \vee f_k\\
 &\stackrel{\eqref{lo1}}{\leq} h_{k-1} \vee f^*\\
 &\leq (h_{k-2} \vee f^*) \vee f^*\\
 &= h_{k-2} \vee f^* 
 \leq \cdots \leq  h_1 \vee f^*
 \stackrel{\eqref{lo3}}\leq \left(-\Delta u_0\right) \vee f^*
\quad \mbox{ a.e.~in } \Omega,
\end{align*}
which together with the assumption that $u_0 \in X \cap V$ gives
\begin{align}
\|h_k\|_{L^2(\Omega)} 
&\leq \|\Delta u_0\|_{L^2(\Omega)} + \|f^*\|_{L^2(\Omega)} +
 \|f_k\|_{L^2(\Omega)}\nonumber\\
&\leq \|\Delta u_0\|_{L^2(\Omega)} + \|f^*\|_{L^2(\Omega)} +
 \|f\|_{L^\infty(0,\infty;L^2(\Omega))}.\label{1}
\end{align}
Here we used the fact that $\|f_k\|_{L^2(\Omega)} \leq
\|f\|_{L^\infty(0,\infty;L^2(\Omega))}$ for all $k$.
Moreover, set 
$$
\bar h_\tau (t) := \partial_t u_\tau(t) - \Delta \bar u_\tau(t)
 = h_k \quad \mbox{ for } \ t \in (t_{k-1},t_k].
$$
Recalling the convergence of approximate solutions obtained in the proof
of Theorem \ref{T:ex}, we observe that
$$
\bar h_\tau \to \partial_t u - \Delta u =: h \quad \mbox{ weakly in }
 L^2(0,T;L^2(\Omega)), 
$$
On the other hand, since $\{\bar h_\tau\}$ is bounded in
$L^\infty(0,T;L^2(\Omega))$ by \eqref{1}, we assure, up to a
(non-relabeled) subsequence, that
$$
\bar h_\tau \to h \quad \mbox{ weakly star in }
L^\infty(0,T;L^2(\Omega))
$$
as $|\tau| \to 0$. Moreover, from the lower
 semicontinuity of the $L^\infty$-norm in the weak star topology, we
 have, by \eqref{1},
$$
 \left\| h \right\|_{L^\infty(0,T;L^2(\Omega))}
 \leq \liminf_{\tau \to 0}
 \left\| \bar h_\tau \right\|_{L^\infty(0,T;L^2(\Omega))}
  \leq \|\Delta u_0\|_{L^2(\Omega)} + \|f^*\|_{L^2(\Omega)}
+ \|f\|_{L^\infty(0,\infty;L^2(\Omega))} 
$$
for each $T > 0$. Since the bound is independent of $T > 0$,
 one can derive that 
\begin{equation}\label{e:g-i:i}
\|h \|_{L^\infty(0,\infty;L^2(\Omega))} \leq 
\|\Delta u_0\|_{L^2(\Omega)} + \|f^*\|_{L^2(\Omega)}
+ \|f\|_{L^\infty(0,\infty;L^2(\Omega))}. 
\end{equation}

Note that $(\xi, v)_{L^2(\Omega)} = 0$ for all $v \in L^2(\Omega)$ with
$v \geq 0$ a.e.~in $\Omega$ and $\xi \in \partial \I(v)$. Thus testing
\eqref{ee2} by $\partial_t u(t)$, we have
\begin{align*}
\|\partial_t u(t)\|_{L^2(\Omega)}^2 + \dfrac 1 2 \dfrac{\d}{\d t} \|\nabla u(t)\|_{L^2(\Omega)}^2
 &= (f(t), \partial_t u(t))_{L^2(\Omega)}\\
 &= (f(t) - f_\infty, \partial_t u(t))_{L^2(\Omega)} + \dfrac{\d}{\d t} (f_\infty, u(t))_{L^2(\Omega)}\\
 &\leq \dfrac 1 2 \|\partial_t u(t)\|_{L^2(\Omega)}^2 + \dfrac 1 2 \|f(t) - f_\infty\|_{L^2(\Omega)}^2
 + \dfrac{\d}{\d t} (f_\infty, u(t))_{L^2(\Omega)}.
\end{align*}
Define an energy functional $E$ on $V$ by
$$
E(v) := \dfrac 1 2 \|\nabla v\|_{L^2(\Omega)}^2 - (f_\infty, v)_{L^2(\Omega)} \quad \mbox{ for } \
 v \in V.
$$
Then one has
\begin{equation}\label{e-ineq}
\dfrac 1 2 \|\partial_t u(t)\|_{L^2(\Omega)}^2 + \dfrac{\d}{\d t} E(u(t)) 
\leq \dfrac 1 2 \|f(t) - f_\infty\|_{L^2(\Omega)}^2 \quad \mbox{ for a.e. } \ t
 \geq 0,
\end{equation}
which implies the non-increase of the function
$$
t \mapsto E(u(t)) - \dfrac 1 2 \int^t_0 \|f(\tau) -
f_\infty\|_{L^2(\Omega)}^2 \, \d \tau
\quad \mbox{ for }  \ t \geq 0.
$$
Moreover, by using the Poincar\'e inequality (due to (H1)), we have
\begin{equation}\label{2}
E(v) \geq \dfrac 1 4 \|\nabla v\|_{L^2(\Omega)}^2 - C \|f_\infty\|_{L^2(\Omega)}^2 \quad \mbox{ for all } \ v \in V.
\end{equation}
Thus integrating \eqref{e-ineq} over $(0,s)$ and using (H2) and
 \eqref{2} we obtain
 \begin{align}
 \int^\infty_0 \|\partial_t u(t)\|_{L^2(\Omega)}^2 \, \d t \leq C, \label{e:1}\\
 \sup_{t \geq 0} \|\nabla u(t)\|_{L^2(\Omega)} \leq C, \label{e:2}
\end{align}
which also yields
\begin{align}
 \sup_{t \geq 0} \|\Delta u(t)\|_{V'} \leq C. \label{e:3}
 \end{align}
Moreover, by virtue of \eqref{e:g-i:i}, we have 
\begin{equation}\label{e:4}
 \|g\|_{L^\infty(0,\infty;L^2(\Omega))} 
  \leq  \|h\|_{L^\infty(0,\infty;L^2(\Omega))} + 
  \|f\|_{L^\infty(0,\infty;L^2(\Omega))}
  \leq M
\end{equation}
for some constant $M$.
Here we used that $g(t) = h(t) - f(t)$.

Let $I \subset (0,\infty)$ be the set of all $t \geq 0$ for which \eqref{ee2}
holds true and $\|g(t)\|_{L^2(\Omega)}$ is bounded by $M$ as in \eqref{e:4}.
Then the set $(0,\infty) \setminus I$ has zero Lebesgue measure. Recalling by
\eqref{e:1} and (H2) that
$$
\int^\infty_0 \left( \| \partial_t u(t) \|_{L^2(\Omega)}^2 + \|f(t) -
f_\infty\|_{L^2(\Omega)}^2 \right) \, \d t < \infty,
$$
one can take a sequence $s_n \in [n,n+1] \cap I$ such that
\begin{alignat}{4}
\partial_t u(s_n) &\to 0 \quad &&\mbox{ strongly in } L^2(\Omega),\label{s1}\\
 f(s_n) &\to f_\infty \quad &&\mbox{ strongly in } L^2(\Omega) \label{s2}
\end{alignat}
as $n \to \infty$.

Moreover, by using the preceding uniform (in $t$) estimates and using
 the compact embedding $V \hookrightarrow L^2(\Omega)$, we deduce,
 up to a (non-relabeled) subsequence, that
\begin{alignat}{4}
 u(s_n) &\to \stasol \quad &&\mbox{ weakly in } V,\\
 & &&\mbox{ strongly in } L^2(\Omega),\label{c:u-2}\\
 - \Delta u(s_n) &\to - \Delta \stasol \quad &&\mbox{ weakly in } V',\\
 - g(t) &\to \xi \quad &&\mbox{ weakly in }
 L^2(\Omega) 
\end{alignat}
with some $\stasol \in V$ and $\xi \in L^2(\Omega)$.
From the demiclosedness of $\partial \I$ in $L^2(\Omega)$ and the fact
that $-g(t) \in \partial \I(\partial_t u(t))$ for a.e.~$t \in
(0,\infty)$, it follows that $\xi \in \partial \I (0)$, that is, $\xi
\leq 0$ a.e.~in $\Omega$ by $\partial \I (0) = (-\infty,0]$. 
Moreover, by \eqref{ee2} and \eqref{s2}, we
 get $\xi - \Delta \stasol = f_\infty$, which leads us to $f_\infty +
 \Delta \stasol = \xi \leq 0$ a.e.~in $\Omega$. 
Furthermore, by \eqref{A1condition} along with Proposition
\ref{prop-GammaD}, the limit $\stasol$ belongs to $X$, since
$\stasol - \Delta \stasol = \stasol - \xi + f_\infty \in L^2(\Omega)$.

Therefore we derive that
\begin{align*}
 \|\nabla u(s_n)\|_{L^2(\Omega)}^2
 &= \left( - \Delta u(s_n), u(s_n) \right)_{L^2(\Omega)}\\
 &= \left( - \partial_t u(s_n), u(s_n) \right)_{L^2(\Omega)}
 + \left( g(s_n), u(s_n) \right)_{L^2(\Omega)}
 + \left( f(s_n), u(s_n) \right)_{L^2(\Omega)}\\
 &\to \left( - \xi + f_\infty, \stasol \right)_{L^2(\Omega)}
 = \left(- \Delta \stasol , \stasol \right)_{L^2(\Omega)} = \|\nabla \stasol\|_{L^2(\Omega)}^2.
\end{align*}
From the uniform convexity of $V$, it holds that
\begin{equation}\label{c:u-V}
u(s_n) \to \stasol \quad \mbox{ strongly in } V.
\end{equation}

We shall next verify the convergence of the solution $u(t)$ to the same limit
 $\stasol$ as $t \to \infty$, that is, $u(t_n) \to \stasol$ for any sequence $t_n
 \to \infty$ and the limit $\stasol$ is independent of the choice of the
 sequence $(t_n)$. Subtracting the stationary equation
$$
\partial \I (0) - \Delta \stasol \ni f_\infty
$$
from the evolution equation \eqref{ee2}, we see that 
$$
\partial_t u(t) + \partial \I(\partial_t u(t)) - \partial \I(0)
- \Delta \left( u(t) - \stasol \right) \ni f(t) - f_\infty.
$$
Test it by $\partial_t u(t)$ to get
$$
\dfrac 1 2 \|\partial_t u(t)\|_{L^2(\Omega)}^2 
+ \dfrac 1 2 \dfrac{\d}{\d t} \|\nabla (u(t) - \stasol)\|_{L^2(\Omega)}^2 
\leq \dfrac 1 2 \|f(t) - f_\infty\|_{L^2(\Omega)}^2.
$$
Integrate both sides over $(s_n, \tau)$ for $\tau > s_n$. Then it follows
 from \eqref{c:u-V} that
\begin{align*}
\dfrac 1 2 \sup_{\tau \geq s_n} \|\nabla (u(\tau) - \stasol)\|_{L^2(\Omega)}^2
&\leq \dfrac 1 2 \|\nabla (u(s_n) - \stasol)\|_{L^2(\Omega)}^2
+ \dfrac 1 2 \int^\infty_{s_n} \|f(t) - f_\infty\|_{L^2(\Omega)}^2 \,\d t\\
&\stackrel{\text{(H2)}}{\to} 0.
\end{align*}
Thus $u(t)$ converges to the limit $\stasol$ strongly in
 $V$ as $t \to \infty$. This completes the proof of the first half of
 the assertion.

We next prove the second half of the assertion.
In addition, assume that $f(x,t) \leq f_\infty(x)$ for a.e.~$x \in Q$
and let $\bar \stasol \in X \cap V$ be the unique solution of the variational
 inequality (VI)($u_0,f_\infty$). Then by Proposition \ref{th1} and
 Theorem \ref{th2} for $A = A_\sigma$ with $\sigma = 0$, $\bar \stasol$
 satisfies $-\Delta \bar \stasol \geq f_\infty$ a.e.~in $\Omega$, and
 moreover, we deduce that $U(x,t) := \bar \stasol(x)$ becomes a strong
 solution of \eqref{is1} by observing that
$$
\partial_t U \equiv 0 \ \mbox{ and } \
\Delta U(x,t) + f(x,t) \leq \Delta \bar \stasol(x) + f_\infty(x) \leq 0
\ \mbox{ a.e.~in } Q.
$$
Hence by the comparison principle for the evolutionary problem \eqref{is1}
(see Theorem \ref{T:comp}), we assure that $u(x,t) \leq \bar \stasol(x)$
for a.e.~$(x,t) \in Q$. Letting $t \to \infty$ and recalling
\eqref{c:u-2}, we obtain
$$
\stasol(x) \leq \bar \stasol(x) \quad \mbox{ for a.e. } x \in \Omega.
$$

On the other hand, since $\stasol$ belongs to $X \cap V$ and satisfies $\stasol
\geq u_0$ and $-\Delta \stasol \geq f_\infty$ in $V'$, applying the comparison
theorem for variational inequalities of obstacle type (see Lemma
\ref{lem-cp-vi}) to ${\rm (VI)}(u_0,f_\infty)$, we assure that
$\bar \stasol \leq \stasol$ a.e.~in $\Omega$. Consequently, we
 conclude that $\stasol = \bar \stasol$ a.e.~in $\Omega$.
 Thus we have proved the second half of the assertion of Theorem
 \ref{T:asp}. \qed

\section{Other equivalent formulations}\label{od}

In this section, we discuss other formulations of solutions for
\eqref{is1}--\eqref{ic} equivalent to those defined by Definition
\ref{def-ss}. Let us start with a \emph{complementarity form} of strong
solutions.

\begin{Th}\label{th-ws}
Let $f\in L^2(Q_T)$ and $u_0\in L^2(\Omega)$.
Then $u$ is a strong solution of the problem \eqref{is1}--\eqref{ic} on
 $[0,T]$, if and only if the following six conditions are satisfied\/{\rm :}
\begin{enumerate}[{\rm (V1)}]
\item
$u\in W^{1,2}(0,T;L^2(\Omega)) \cap
L^2(0,T;H^2(\Omega)\cap H^1(\Omega))$,
\item
$\partial_t u \ge 0$
a.e.~in $Q_T$,
\item
$\partial_t u -\Delta u-f
\ge 0$ a.e.~in $Q_T$,
\item
$\left(\partial_t u -\Delta u-f\right)
 \partial_t u =0$ a.e.~in $Q_T$,
\item
$\partial_\nu u = 0$ \ $\cH^{n-1}$-a.e.~on $\GammaN$ and $u = 0$ \
     $\cH^{n-1}$-a.e.~on $\GammaD$ \ for a.e.~$t \in (0,T)$,
\item
$u(0,\cdot)=u_0$.
\end{enumerate}
\end{Th}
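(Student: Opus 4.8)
The plan is to prove the two implications separately, the crux of both being the elementary scalar characterization of the positive part: for real numbers $a$ and $s$, one has $a = s_+$ if and only if $a \geq 0$, $a - s \geq 0$ and $(a-s)a = 0$. I would apply this pointwise with $a = \partial_t u(x,t)$ and $s = \Delta u(x,t) + f(x,t)$, which turns the fully nonlinear equation (ii) of Definition \ref{def-ss} into the complementarity triple (V2)--(V4); the remaining conditions are matched simply by unwinding the definitions of the spaces $X$ and $V$.

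For the forward implication I would assume $u$ is a strong solution. Condition (V1) is immediate since $X \cap V \subset H^2(\Omega) \cap H^1(\Omega)$, so that (i) of Definition \ref{def-ss} already supplies the regularity in (V1). Writing $s := \Delta u + f$, the equation $\partial_t u = s_+$ yields $\partial_t u \geq 0$ (this is (V2)) and $\partial_t u - s = s_+ - s \geq 0$ (this is (V3)), while $(s_+ - s)s_+ = 0$ holds at every point (when $s \geq 0$ the first factor vanishes, when $s < 0$ the second does), which is (V4). From (i) one has $u(t) \in X \cap V$ for a.e.~$t$; since membership in $V$ encodes $\gamma_0 u(t) = 0$ on $\GammaD$ and membership in $X$ encodes $\partial_\nu u(t) = 0$ on $\GammaN$, this gives (V5), and (V6) is condition (iii).

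For the converse I would assume (V1)--(V6). The regularity (V1) gives $u(t) \in H^2(\Omega) \cap H^1(\Omega)$ for a.e.~$t$, so the traces in (V5) are well defined; combining (V1) and (V5) then shows $u(t) \in X \cap V$ for a.e.~$t$, and since $X \cap V$ is a closed subspace of $H^2(\Omega)$ with the induced norm, the map lies in $L^2(0,T;X\cap V)$. With $u \in W^{1,2}(0,T;L^2(\Omega))$ this recovers (i) of Definition \ref{def-ss}, and the embedding $W^{1,2}(0,T;L^2(\Omega)) \hookrightarrow C([0,T];L^2(\Omega))$ supplies the continuity. To recover (ii) I would argue pointwise: setting $a := \partial_t u$ and $s := \Delta u + f$, conditions (V2)--(V4) read $a \geq 0$, $a - s \geq 0$ and $(a-s)a = 0$; by (V4) either $a = 0$, in which case (V3) forces $s \leq 0$ and hence $s_+ = 0 = a$, or $a = s$, in which case (V2) forces $s \geq 0$ and hence $s_+ = s = a$. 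Either way $a = s_+$, which is exactly $\partial_t u = (\Delta u + f)_+$, and (iii) is (V6).

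I do not expect a serious obstacle, as the statement is essentially a dictionary between two formulations; the only points needing care are formal. One is to confirm that (V1) carries enough regularity for the normal trace $\partial_\nu u = \gamma_0(\nabla u)\cdot\nu$ in (V5) to be meaningful, which it does since $u(t) \in H^2(\Omega)$ for a.e.~$t$. The other is the measurability needed to upgrade the a.e.-pointwise membership $u(t) \in X\cap V$ to the Bochner regularity $u \in L^2(0,T;X\cap V)$, which follows because $X\cap V$ is a closed subspace of $H^2(\Omega)$ endowed with the induced norm.
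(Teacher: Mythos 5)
Your proof is correct and follows essentially the same route as the paper: both reduce the equivalence to the pointwise scalar fact that $a=s_+$ iff $a\ge 0$, $a-s\ge 0$ and $(a-s)a=0$, applied with $a=\partial_t u$ and $s=\Delta u+f$, with (V1), (V5), (V6) matched to (i), (iii) by unwinding the definitions of $X$ and $V$. The only cosmetic difference is that the paper organizes the converse through the measurable sets $Q_0$, $Q^*$ and an integral of the product in (V4), whereas you argue directly pointwise a.e.; the content is identical.
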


\begin{proof}
If $u$ satisfies (V1) (or (i) of Definition \ref{def-ss}), one can define
 the following measurable subsets of $Q_T$:
\begin{align*}
Q_0&:=\{(x,t)\in Q_T \colon \partial_t u\neq (\Delta u+f)_+\},\\
Q_1&:=\{(x,t)\in Q_T \colon \partial_t u= (\Delta u+f)_+>0\},\\
Q_2&:=\{(x,t)\in Q_T \colon \partial_t u= (\Delta u+f)_+=0\},
\end{align*}
which are disjoint and satisfy $Q_T=Q_0\cup Q_1\cup Q_2$.

Let $u$ satisfy (i)--(iii) of Definition \ref{def-ss}. 
Conditions (V1), (V5) and (V6) follow immediately. 
From (ii) of Definition \ref{def-ss}, it follows that
$$
\cH^{n+1}(Q_0)=0,
$$
and moreover, by definition,
\begin{align*}
&\partial_t u>0,\quad \partial_t u-\Delta u-f= 0\quad\mbox{ a.e.~in }Q_1,\\
&\partial_t u=0,\quad \partial_t u-\Delta u-f\ge 0\quad\mbox{ a.e.~in }Q_2.
\end{align*}
Hence (V2), (V3) and (V4) follows.
Consequently, every strong solution $u$ of \eqref{is1}--\eqref{ic} in
 the sense of Definition \ref{def-ss} satisfies all the conditions
 (V1)--(V6).

Conversely, let $u$ satisfy (V1)--(V6). 
Conditions (i) and (iii) of Definition \ref{def-ss} follow from
 (V1), (V5) and (V6). 
 Let us next show that $\cH^{n+1}(Q_0)=0$, which is equivalent to the
 condition (ii) of Definition \ref{def-ss}. Define
\begin{align*}
Q^*:=\{(x,t)\in Q_T \colon \partial_t u \ge 0 \ \mbox{ and } \ \partial_t u-\Delta
 u-f\ge 0 \ \mbox{ at } (x,t)\}.
\end{align*}
Then it holds that
 $\cH^{n+1}(Q_T\setminus Q^*)=0$ by (V2) and (V3). 

We claim that
\begin{align}\label{QQ}
\partial_t u> 0,
\quad
\partial_t u -\Delta u -f> 0
\quad
\mbox{at each }\ (x,t) \in Q_0\cap Q^*.
\end{align}
Indeed, by the definitions of $Q_0$ and $Q^*$, 
$u$ satisfies the following conditions at each $(x,t) \in Q_0\cap Q^*$:
\begin{align}
&\partial_t u \neq (\Delta u+f)_+,\label{cc1}\\
&\partial_t u\ge 0,\label{cc2}\\
&\partial_t u -\Delta u -f\ge 0.\label{cc3}
\end{align}
If $\partial_t u=0$ at some $(x_0,t_0) \in Q_0\cap Q^*$, 
then $\Delta u+f>0$ at $(x_0,t_0)$ by \eqref{cc1}
and it contradicts \eqref{cc3}.
Hence, we obtain $\partial_t u>0$ at each point of $Q_0\cap Q^*$ by \eqref{cc2}.
Similarly, if $\partial_t u-\Delta u-f=0$ at some $(x_0,y_0) \in Q_0\cap Q^*$, 
then $0<\partial_t u=\Delta u+f=(\Delta u+f)_+$ at $(x_0,y_0)$, which
 contradicts \eqref{cc1}.
Thus, we obtain $\partial_t u-\Delta u-f>0$ in $Q_0\cap Q^*$ by \eqref{cc3}.

Since $Q_T=Q_0\cup Q_1\cup Q_2$ is a disjoint union and $\cH^{n-1}(Q_T
 \setminus Q^*) = 0$, we have
\begin{align*}
0&\stackrel{\text{(V4)}}=
\dint_{Q_T} \left( \partial_t u
-\Delta u-f\right)
\partial_t u
\,\d x \,\d t
=
\dint_{Q_0}\left( \partial_t u
-\Delta u-f\right)
\partial_t u
\,\d x \,\d t\\
&=
\dint_{Q_0\cap Q^*}\left(\partial_t u
-\Delta u-f\right)
\partial_t u 
\,\d x \,\d t.
\end{align*}
By \eqref{QQ}, we obtain $\cH^{n+1}(Q_0\cap Q^*)=0$;
otherwise the last integral is positive.
Hence we conclude that
\begin{align*}
\cH^{n+1}(Q_0)=\cH^{n+1}(Q_0\cap Q^*)+\cH^{n+1}(Q_0\setminus Q^*)=0.
\end{align*}
This completes the proof. 
\end{proof}

Finally, let us discuss a possible formulation of
\eqref{is1} in the sense of viscosity solutions. Set 
\begin{equation}\label{veq}
F(x,t,Y):= - \big( \mathrm{tr} \, Y +f(x,t) \big)_+
\quad
\mbox{ for } \ x\in\Omega, \ t\in (0,T), \ Y \in \R^{n\times n}_{\rm sym},
\end{equation}
where $\R^{n\times n}_{\rm sym}$ denotes the set of all symmetric $n
\times n$ real matrices. Then \eqref{eqn} is also written as
\begin{align*}
\partial_t u(x,t) + F(x,t,D^2u(x,t)) = 0,
\end{align*}
where $D^2u(x,t)\in \R^{n\times n}_{\rm sym}$ is the Hessian matrix 
of $u$. Since $F$ is degenerate elliptic, one may apply the theory of
viscosity solutions to prove the existence and uniqueness of viscosity
solutions of \eqref{veq} under suitable assumptions for $f(x,t)$
and the boundary condition. However, to the authors' knowledge, no
result on such a viscosity approach to \eqref{is1} has been obtained except
for~\cite{YamaN}. Moreover, the relation between the notion of viscosity
solutions and that of strong solutions for \eqref{is1} is widely open.
For further details of the theory of viscosity solutions,
we refer the reader to~\cite{UG},~\cite{BG},~\cite{Gig06} and references
therein.

\section*{Acknowledgments}

G.A.~is supported by JSPS KAKENHI Grant Number 25400163.
M.K.~is supported by JSPS KAKENHI Grant Number 26400195.
Both authors are also supported by the JSPS-CNR bilateral joint research
project: \emph{Innovative Variational Methods for Evolution Equations}.

\end{document}